\begin{document}

\title{A novel view: edge isoperimetric methods and reliability evaluation of several kinds of conditional edge-connectivity of interconnection networks\thanks{This work was supported by the Doctoral Startup Foundation of Xinjiang University (No. 62031224736), the Science and Technology Project of Xinjiang Uygur Autonomous Region (2020D01C069), Open Project of Applied Mathematics Key Laboratory of Xinjiang (No. 2020D04046), the National Natural Science Foundation of China (No. 11671186), and Tianchi Ph.D Program  (No. tcbs201905)}}

\newtheorem{lem}{Lemma}[section]
\newtheorem{thm}[lem]{Theorem}
\newtheorem{cor}[lem]{Corollary}
\newtheorem{obs}[lem]{Observation}
\newtheorem{rem}[lem]{Remark}
\newtheorem{defi}[lem]{Definition}
\newtheorem{con}[lem]{Conjecture}
\renewcommand{\thefootnote}{\fnsymbol{footnote}}
\renewcommand{\baselinestretch}{1.5}

\author{\IEEEauthorblockN{Mingzu Zhang}
\IEEEauthorblockA{\textit{\footnotesize{School of Mathematics and System Sciences}} \\
\textit{ Xinjiang University}\\
Urumqi, China \\
mzuzhang@163.com}
\and
\IEEEauthorblockN{Zhaoxia Tian}
\IEEEauthorblockA{\textit{\footnotesize{School of Mathematics and System Sciences}} \\
\textit{ Xinjiang University}\\
Urumqi, China \\
zhaoxiattt@163.com}
\and
\IEEEauthorblockN{Lianzhu Zhang}
\IEEEauthorblockA{\textit{\footnotesize{Department of School of
Mathematical Sciences}} \\
\textit{Xiamen University}\\
Fujian, China \\
zhanglz@xmu.edu.cn}}

\maketitle

\begin{abstract}
Reliability evaluation and fault tolerance of an interconnection network of some parallel and distributed systems are discussed separately under various link-faulty hypotheses in terms of different $\mathcal{P}$-conditional edge-connectivity. With the help of edge isoperimetric problem's method in combinatorics, this paper mainly offers a novel and unified view to investigate the $\mathcal{P}$-conditional edge-connectivities of hamming graph $K_{L}^{n}$ with satisfying the property that each minimum $\mathcal{P}$-conditional edge-cut separates the $K_{L}^{n}$ just into two components, such as $L^{t}$-extra edge-connectivity, $t$-embedded edge-connectivity, cyclic edge-connectivity, $(L-1)t$-super edge-connectivity, $(L-1)t$-average edge-connectivity and $L^{t}$-th isoperimetric edge-connectivity. They share the same values in form of $(L-1)(n-t)L^{t}$ (except for cyclic edge-connectivity), which equals to the minimum number of links-faulty resulting in an $L$-ary-$n$-dimensional sub-layer from $K_{L}^{n}$. Besides, we also obtain the exact values of $h$-extra edge-connectivity and $h$-th isoperimetric edge-connectivity of hamming graph  $K_{L}^{n}$ for each $h\leq L^{\lfloor {\frac{n}{2}} \rfloor}$.
For the case $L=2$, $K_2^n=Q_n$ is $n$-dimensional hypercube. Our results can be applied to more generalized class of networks, called $n$-dimensional bijective connection networks, which contains hypercubes, twisted cubes, crossed
cubes, M\"obius cubes, locally twisted cubes and so on. Our results improve several previous results on this topic.
\end{abstract}

\begin{IEEEkeywords}
Edge isoperimetric problem, Conditional connectivity, Edge disjoint path, Reliability, Unified method, bipartite

\end{IEEEkeywords}

\section{Introduction}
With the emergence of new topological architectures, parallel and distributed processing research is at the heart of developing new systems in order to be able to take advantage of the underlying interconnection network. A thorough understanding of various aspects of parallel and distributed systems is necessary to be able to achieve the performance of the new parallel computers and supercomputers. Fault tolerance and reliability have to be optimized for some obvious reasons when choosing good models for interconnection networks. It is well known that the topological interconnection network of parallel and distributed system is usually modeled as a graph $G=(V, E)$, where the vertices and edges represent processors and physical links between the processors.

There are many topological parameters of $G$ to evaluate the reliability of this interconnection network. It is known that the problem of finding node or edge disjoint paths is closely related to a well-known Menger's theorem. Menger theorem laid a solid foundation for the theory of fault tolerance and reliability evaluation of interconnection network of parallel processing systems. Let $x$ and $y$ be two any distinct vertices of a graph $G$. The minimum size of an $(x, y)$ edge-cut equals the maximum number of edge disjoint $(x,y)$-paths. Menger's theorem is a characterization of the edge-connectivity in finite graphs in terms of the maximum number of edge-disjoint paths that can be found between any two distinct pair of vertices. As the Menger's theorem is often required to find a maximum of edge-disjoint paths between two given vertices of $G$, motivated by this, we want to go even further, and consider the cases on many-to-many edge disjoint paths of a connected graph $G$ under additional condition.

Let $\mathcal{P}$ be some graph-theoretic property of a connected graph $G$. We aim to find the maximum number of edge disjoint paths connecting any two disjoint connected subgraphs with just satisfying the property $\mathcal{P}$ in $G$.
If $\mathcal{P}$ is containing a vertex, then the corresponding version is the classical Menger edge-connectivity. Conditional connectivity introduced by F. Harary in 1983, generalized the theories of connectivity in both vertex and edge versions. Not only does it meet the increasing need of a more accurate measure of reliability of large-scale parallel processing systems, but also theoretically enriches the theory of network connectedness. This paper mainly studies six kinds of conditional edge-connectivities under various link-faulty hypotheses. The relationship between the vertex version and various diagnosability has been investigated \cite{cdh2015,hh2012,lxc,lxz2016,lxz,lxz2015,lzxw2015,xlzh2016}.

Instead of focusing on studying the $\mathcal{P}$-conditional edge-connectivity of interconnection networks under various kinds of links faulty hypotheses case by case, this paper mainly offers a novel and unified edge isoperimetric methord to investigated the $\mathcal{P}$-conditional edge-connectivity of hamming graph $K_L^n$ under six kinds of link-faulty hypotheses.  These edge-connectivities shares a common  bipartite property: each minimum $\mathcal{P}$-conditional edge-cut separates the $K_L^n$ into two components (or two parts). For $L$-ary-$n$-dimension hamming graph, $L^t$-extra edge-connectivity, $t$-embedded edge-connectivity, cyclic edge-connectivity, $(L-1)t$-super edge-connectivity, $(L-1)t$-average edge-connectivity and $L^{t}$-th isoperimetric edge-connectivity share the same values in form of $(L-1)(n-t)L^{t}$ for $L\geq 2$, $n\geq1$ and $0\leq t\leq n-1$. Our results are applied to general class of networks. Reliability evaluation and fault tolerance of an interconnection network of some parallel and distributed systems are discussed separately under various link-faulty hypotheses in terms of different $\mathcal{P}$-conditional edge-connectivity.

The rest of this paper is organized as follows. In section II, some preliminaries, terminologies, and six kinds of definitions about $\mathcal{P}$-conditional edge-connectivity will be given. In section III, known results about isoperimetric problem, the exact values of $\xi_m(K_L^n)$ and its construction will be presented. In section IV, some useful properties of the functions $\xi_m(K_L^n)$ and $ex_m(K_L^n)$ will be proved. In section V, unified method for $\mathcal{P}$-conditional edge-connectivities of hamming graph $K_L^n$ will be introduced. Finally, the conclusion of this paper will be given in section VI.

\section{Preliminaries}
In a connected graph $G$, for any edge set $F \subseteq E(G)$, the notation $G-F$ denotes the subgraph obtained after removing the edges in $F$ from $G .$ Given a vertex set $X \subseteq V(G)$, we denote $G[X]$ the subgraph of $G$ induced by $X$, and $\overline{X}=V(G) \backslash X$ is the complement of $X .$ For two vertex sets $X$ and $\overline{X}$, we denote $[X, \overline{X}]$ the set edges of $G$ with one end in $X$ and the other end in $\bar{X}.$
A component of
graph $G$ is a maximal subgraph in which each pair of vertices is connected with each other via a path.

\begin{defi}\cite{Harary} Let $\mathcal{P}$ be a property of $G$. The edge subset $F \subset E(G)$ is defined as a $\mathcal{P}$-conditional edge-cut of $G$, if any, $G-F$ is disconnected, if any, and each component satisfies the condition $\mathcal{P}$. $\lambda(\mathcal{P},G), \mathcal{P}$-conditional edge-connectivity of $G$, is defined as the minimum cardinality $\mathcal{P}$-conditional edge-cut $F$ of $G$.
\end{defi}
Based on the different properties and the faulty-free set, recently many researchers have investigated the various kinds of $\mathcal{P}$-conditional edge-connectivity of many classes of networks, such as $h$-extra edge-connectivity, $l$-embedding edge-connectivity, cyclic edge-connectivity, $k$-super edge-connectivity, $k$-average edge-connectivity and $h$-isoperimetric edge-connectivity.
Let
\par~~~~~~~$\mathcal{P}_{1}^{h}=\{$ containing at least $h$ vertices$\}$,
\par~~~~~~~$\mathcal{P}_{2}^{l}=\{$ lying in an $l$-dimensional subnetwork of $G\}$,
\par~~~~~~~$\mathcal{P}_{3}^{c}=\{$ containing at least one cycle of $G\}$,
\par~~~~~~~$\mathcal{P}_{4}^{k}=\{$ having the minimum degree at least $k\}$,
\par~~~~~~~$\mathcal{P}_{5}^{k}=\{$ satisfying the average degree at least $k\}$.

Note that if $\mathcal{P}=\mathcal{P}_{1}^{h}$, it gives the definition of $h$-extra edge-connectivity of $G$ first introduced by F\`{a}brega and Foil in 1996.
\begin{defi}\cite{Foil(1996)} For a connected graph $G$, an edge subset $F \subseteq E(G)$ is called as an $h$-extra edge-cut of $G$, if exists, $G-F$ is disconnected and every component of $G-F$ has not less than $h$ vertices. The $h$-extra edge-connectivity of $G$, written as $\lambda_{h}(G)$, is defined as the minimum cardinality among all the $h$-extra edge-cuts.
\end{defi}
And if $\mathcal{P}=\mathcal{P}_{2}^{l}$, it is defined as the $l$-embedded edge-connectivity of an $n$-dimensional recursive network $G_n$, $\eta_{l}(G_{n})$ for $0\leq l\leq n-1$, first introduced by Yang and Wang in 2012 \cite{em}.

A graph $G$ is cyclic edge connected if $G$ can be separated into at least two components with two of them containing a cycle. The cyclic edge-connectivity of $G$, $c\lambda(G)$, is minimum number of edges whose removal disconnects the graph $G$ satisfying that at least two components with one cycle \cite{cc}.

Similarly, if $\mathcal{P}=\mathcal{P}_{4}^{k}$ and $\mathcal{P}=\mathcal{P}_{5}^{k}$, the $k$-super edge-connectivity and $k$-average edge-connectivity of $G$ can be defined as the minimum cardinality of $k$-super edge-cut and the minimum cardinality of $k$-average edge-cut , such that each component of their removal from $G$ has the minimum degree at least $k$ and the average degree at
least $k$, respectively. The $h$-th isoperimetric edge-connectivity $\gamma_{h}(G)$ of $G$ is defined as $$\gamma_{h}(G)=\min \{|[U,\overline{U}]|: U \subset V(G),|U| \geqslant h,|\overline{U}| \geqslant h\},$$ where $|[U,\overline{U}]|$ is the number of edges with one end in $U$ and the other end in $\overline{U}=V \backslash U $ \cite{t1996,z}. Write $$\beta_{h}(G)=\min \{|[U,\overline{U}]|: U \subset V(G),|U|=m\}.$$ A graph $G$ with $\gamma_{j}(G)=\beta_{j}(G)$, $j=1, \ldots, k$ is said to be $\gamma_{h}$-optimal.
Although the definition of $h$-th
isoperimetric edge-connectivity $\gamma_{h}(G)$ of the graph $G$ deletes the connectedness of components from that of $h$-extra edge-connectivity $\lambda_{h}(G)$,
and one does find the situation where two definitions are not equivalent, for the networks involved hamming graph and its related variants in this paper, they do share the same values.

Although the above six kinds of $\mathcal{P}$-conditional edge-connectivities contain its own properties, they share a fact that removal their each minimum $\mathcal{P}$-conditional edge-cut results in exact two components (or two parts).
The $\mathcal{P}$-conditional edge-connectivity is called bipartite, if
each minimum $\mathcal{P}$-conditional edge-cut results in exact two components (or two parts).

Let $\mathcal{B}=\{\mathcal{P}~|$~$\mathcal{P}$-conditional edge-connectivity is bipartite$\}$.
\begin{thm}\label{bipart}
 $\lambda(\mathcal{P}_1^k,G)$, $\lambda(\mathcal{P}_2^k,G)$, $\lambda(\mathcal{P}_3^k,G)$,
$\lambda(\mathcal{P}_4^k,G)$, $\lambda(\mathcal{P}_5^k,G)$ and $\lambda(\mathcal{P}_6^k,G)$\footnote{Strictly speaking, the removal of a minimum $h$-th isoperimetric edge-cut, results in two parts rather than two components. The connectedness of these two parts is not necessary.} are bipartite.
\end{thm}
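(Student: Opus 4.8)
The plan is to recast "bipartite" as the statement that no minimum $\mathcal{P}$-conditional edge-cut can leave three or more components, and to derive a contradiction by an explicit component-merging construction. First I would fix a minimum $\mathcal{P}$-conditional edge-cut $F$ and suppose, for contradiction, that $G-F$ has components $C_1, C_2, \ldots, C_m$ with $m \geq 3$. I would record the standard fact that $F=\bigcup_{i<j}[V(C_i),V(C_j)]$ and form the \emph{component (quotient) graph} $H$ whose vertices are the $C_i$ and whose edges are those pairs joined by at least one edge of $F$; since $G$ is connected, $H$ is connected on $m$ vertices.

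The core step is to isolate a single component chosen as a leaf of a spanning tree $T$ of $H$. If $C_i$ is such a leaf, then $H-C_i$ is still connected, so the complement $\overline{V(C_i)}$ induces a connected subgraph of $G$; hence $G-[V(C_i),\overline{V(C_i)}]$ has exactly two components, namely $C_i$ and $G[\overline{V(C_i)}]$. Because $m\geq 3$, the connected graph $H-C_i$ on $m-1\geq 2$ vertices carries at least one edge of $F$ not incident to $C_i$, so $[V(C_i),\overline{V(C_i)}]\subsetneq F$ and the new cut is strictly smaller. It then remains only to verify that both pieces still satisfy $\mathcal{P}$: the part $C_i$ does by hypothesis, while $G[\overline{V(C_i)}]$ is obtained by taking the union of the remaining components and adding back their interconnecting $F$-edges. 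For $\mathcal{P}_1^h$ (at least $h$ vertices), $\mathcal{P}_3^c$ (containing a cycle), $\mathcal{P}_4^k$ (minimum degree $\geq k$) and $\mathcal{P}_5^k$ (average degree $\geq k$) this merged part inherits the property, since each of these is preserved under unioning whole components and adding edges: the vertex count and the presence of a cycle only grow, every vertex keeps all of its former neighbours so the minimum degree cannot drop, and the inequality $2|E|\geq k|V|$ is stable under adding edges. This contradicts the minimality of $F$ and forces $m=2$.

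The isoperimetric case $\mathcal{P}_6$ (the $h$-th isoperimetric edge-connectivity) needs no argument at all: $\gamma_{h}(G)$ is \emph{defined} as a minimum of $|[U,\overline{U}]|$ over a single bipartition $U,\overline{U}$ with $|U|,|\overline{U}|\geq h$, so a minimizer is two parts by construction, exactly as the footnote indicates (connectedness of the two parts is not required).

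The step I expect to be the genuine obstacle is the embedded case $\mathcal{P}_2^l$, because "lying in an $l$-dimensional subnetwork" is \emph{not} closed under the union used above: merging two components that sit in different $l$-dimensional subnetworks need not sit in any single one, so the generic monotonicity argument breaks down. Here I would instead exploit the recursive structure of the host network $G_n$, arguing that a minimum $l$-embedded edge-cut must isolate a single $l$-dimensional subnetwork and that the complement of any one $l$-dimensional subnetwork of $G_n$ is connected for $0\leq l\leq n-1$, so that its removal already produces exactly two components; equivalently, from a configuration with $m\geq 3$ components one locates an isolated $l$-dimensional subnetwork and peels it off to obtain a strictly smaller $l$-embedded cut. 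Making this precise — in particular ruling out a minimum cut whose components are several disjoint $l$-dimensional subnetworks glued together by $F$ — is where the specific combinatorics of $K_L^n$ (rather than generic graph monotonicity) must enter, and is the part I would spend the most care on.
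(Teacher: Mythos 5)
Your component-merging argument is essentially the paper's own proof, only written out more carefully. The paper likewise supposes a minimum $\mathcal{P}_i^k$-conditional edge-cut $F_0$ leaving components $H_1,\dots,H_z$ with $z>2$, asserts the existence of a component $H_b$ such that both $G[V(H_b)]$ and $G[\overline{V(H_b)}]$ are connected, and concludes $|[V(H_b),\overline{V(H_b)}]|<|F_0|$ because $F_0$ must contain an edge not lying in the new cut; your spanning-tree-leaf selection in the quotient graph is exactly the (unstated) justification of the paper's existence claim, and your strict-inequality count (the quotient graph minus a leaf is connected on $z-1\geq 2$ vertices, so carries an $F_0$-edge) is the detail behind the paper's one-line assertion. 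Your case-by-case check that the merged side inherits $\mathcal{P}_1^h$, $\mathcal{P}_3^c$, $\mathcal{P}_4^k$, $\mathcal{P}_5^k$ makes explicit what the paper silently assumes when it declares $[V(H_b),\overline{V(H_b)}]$ to be ``also a $\mathcal{P}_i^k$-conditional edge-cut,'' and your dismissal of $\mathcal{P}_6$ as bipartite by definition matches the paper's footnote.

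The one place you diverge from the paper --- flagging $\mathcal{P}_2^l$ as a genuine obstacle that requires the recursive combinatorics of $K_L^n$ --- is both unnecessary and, as written, the only incomplete part of your proof: you sketch a plan (including the unproven claim that the complement of a single $l$-dimensional subnetwork is connected) but never actually rule out minimum cuts with three or more components in that case. The difficulty is an artifact of reading ``lying in an $l$-dimensional subnetwork'' as ``the component is contained in one $l$-dimensional subnetwork''; under that reading the large side of any cut, having more than $L^l$ vertices, could never satisfy the property, so the definition would be vacuous. The operative definition of embedded edge-connectivity in the cited sources (Yang--Wang 2012; Li--Dong--Yan--Xu 2016) requires every vertex of $G-F$ to lie in an $l$-dimensional subnetwork of $G-F$, and this property \emph{is} preserved by your merging step: each vertex of the union keeps the very subnetwork that witnessed the property in its old component, and restoring $F$-edges cannot destroy it. Hence the generic monotonicity argument covers $\mathcal{P}_2$ verbatim, which is precisely how the paper treats it --- uniformly with all $i\neq 3$, and with only the cyclic case singled out for a ``similar'' remark. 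With that correction your proof closes and coincides with the paper's.
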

\begin{proof}
For $i\neq 3$, recall that $\lambda(\mathcal{P}_i^k,G)$ is the minimum number of an edge set of the graph $G$ whose removal disconnects the graph $G$ with all its components satisfying the condition $\mathcal{P}_i^k$. Suppose $F$ is a $\mathcal{P}_i^k$- conditional edge-cut, it can be deduced that the optimal number is obtained only when there are exactly two components produced in $G-F$. In fact, if there exists a minimum $\mathcal{P}_i^k$- conditional edge-cut $F_{0}$, whose removal disconnects the connected graph $G$ with all its components $H_{1}, H_{2}, \ldots, H_{z}, z>2$ satisfying the condition $\mathcal{P}_i^k$, then there exists an integer $b$, $1\leq b\leq z$, and $[V(H_{b}), \overline{V(H_{b})}]$ is also an $\mathcal{P}_i^k$-conditional edge-cut of $G$, where the induced subgraphs by $V(H_b)$ and $\overline{V(H_{b})}$ are connected. Because of $F_0\cap E(H_b)\neq \emptyset$ or $F_0\cap E(G[\overline{V(H_b)}])\neq \emptyset$, $|[V(H_{b}), \overline{V(H_{b})}]|\textless|F_{0}|$, a contradiction. So for each $i\neq 3$, $\lambda(\mathcal{P}_i^k,G)$ is bipartite. If $i=3$, $k=c$, for convince, $\lambda(\mathcal{P}_3^c,G)$ means that each component contains one cycle. The similar proof can be done.
\end{proof}

Given a connected graph, once above six conditional edge-connectivities are well-defined, then set $\{$ $\mathcal{P}_1^h$, $\mathcal{P}_2^l$, $\mathcal{P}_3^c$,
$\mathcal{P}_4^k$, $\mathcal{P}_5^k$, $\mathcal{P}_6^h \}\subseteq \mathcal{B}$.
Their bipartite property is its essential feature for them.

We borrow the conception of Mader's atom \cite{mm1971} to study of $\mathcal{P}_i^k$-conditional edge-connectivities. A vertex set $S$ is called a $\lambda(\mathcal{P}_i^k)$-fragment if $[S, \bar{S}]$ is a minimum $\mathcal{P}_i^k$-conditional edge-cut. Obviously, if $S$ is a $\lambda(\mathcal{P}_i^k)$-fragment, so is $\bar{S}$. A minimum $\lambda(\mathcal{P}_i^k)$-fragment is a $\lambda(\mathcal{P}_i^k)$-atom. Let $A$ be called a $\lambda(\mathcal{P}_i^k)$-atom, then $G[A]$ is connected. $\gamma_{k}$-fragment and $\lambda_{k}$-atom are defined similarly without connected condition constrain.
Write
\par~$\alpha(\mathcal{P}_i^k,G)$ the cardinality of a $\lambda(\mathcal{P}_i^k)$-atom and
\par~$\theta_{\mathcal{P}_i^k}(G)=\min\{|X|$\,$|$\,$G[X]$ satisfying the property $\mathcal{P}_i^k\}.$

It is easy to see that a graph $G$ is $\lambda(\mathcal{P}_i^k)$-optimal if and only if $\alpha(\mathcal{P}_i^k,G)=\theta_{\mathcal{P}_i^k}(G)$. The parameters $\alpha(\mathcal{P}_i^k,G)$ and $\theta_{\mathcal{P}_i^k}(G)$ play an important role in obtaining the exact value of $\lambda(\mathcal{P}_i^k,G)$ and determining its optimality.

This idea drives us to study the minimum possible ``boundary-size" of a set of a given ``size". The classical isoperimetric problem is usually expressed in the form of an inequality ${L_M}^2\geq4\pi A_M$ that relates the length $L_M$ of a closed curve $M$ and the area $A_M$ of the planar region that it encloses in the Euclidean plane $R^2$.
Which closed curve minimizes the length $L$ of the curve with a fixed size of the area $A$? The equality holds if and only if $M$ is a circle.
It was ``known" to the ancient Greeks, but it was not until the 19th century that this was proved vigorously by Karl Weierstrass in a series of lectures in 1870, in Berlin.
Isoperimetric problems are classical objects of the study in mathematics. In general,
they ask for the minimum possible ``boundary-size" of a set of a given ``size", where the exact meaning of these words varies according to the problem.
Making efforts to obtain the exact value of the $\mathcal{P}_i^k$-conditional edge-connectivity drives us to study the minimum possible ``edge boundary-size" of a set of a given ``size" in graph $G$.

For discrete case, a graph $G=(V, E)$, the given ``size" is the ``number of vertices", while the ``boundary-size" of the graph is ``the size of the edge boundary or the vertex boundary". Both edge and vertex versions are related to the $\mathcal{P}$-conditional edge-connectivity and $\mathcal{P}$-conditional connectivity, respectively. This paper mainly focuses on edge version. Let $G=(V,E)$ be a connected graph and $A\subseteq V(G)$. Recall that the definition of $\beta_m(G)=\min\limits_{A\subseteq V,|A|=m} |[A,\overline{A}]|$ in the $h$-th isoperimetric edge-connectivity.
For a given graph $G$, the edge isoperimetric problem of $G$ is to find a subset $A^*\subseteq V(G)$ with $|A^*|=m$ such that $\beta_m(G)=|[A^*,\overline{A^*}]|$ for any $1\leq m\leq |V(G)|$ (introduced by Harper in 1964)\cite{harper1964optimal}.
Even if in Harper's seminal paper of edge isoperimetric problem on hypercube $Q_n=K_2^n$,
the original definition $G[A^*]$ is not required to be connected, his example does satisfy that $K_2^n[A^*]$ is connected. Let $\xi^e_{m}(G)=\min\{|[X, \overline{X}]|: X\subset V(G)$ with $|X|=m \leq \lfloor |V(G)|/2\rfloor$, and $G[X]$ is connected $\}$.
$\xi^e_{m}(G)$ is unilateral connected for each $1\leq m\leq \lfloor|V(G)|/2\rfloor$.

In this paper we want to go further.
Let $\xi_{m}(G)=\min\{|[X, \overline{X}]|:X\subset V(G)$ with $|X|=m \leq\lfloor |V(G)| /2\rfloor$, and both $G[X]$ and $G[\overline{X}]$ are connected$\}$.
$\xi_{m}(G)$ is bilateral connected for each $1\leq m\leq \lfloor|V(G)|/2\rfloor$.
More coincidentally, Harper's seminal example meets the equations $\beta_{m}(K_2^n)=\xi^e_{m}(K_2^n)=\xi_{m}(K_2^n)$ for each $m\leq 2^{n-1}$.
However, it does not always hold for general $G$. Even if we add the unilateral and bilateral connected conditions, the result $\xi_{m}(G)=\xi^e_{m}(G)$ does not always hold for each $1\leq m\leq \lfloor|V(G)|/2\rfloor$.

\begin{figure}[htbp]
\scalebox{0.3}{\includegraphics{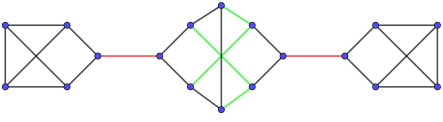}}
\caption{Graph $G^*$ without satisfying condition $\xi_8(G^*)\neq\xi^e_8(G^*)$.~~~~~~}
\label{fig}
\center
\end{figure}
For example, given a 3-regular graph $G^*$ as shown in Fig.1, $\xi_8(G^*)=\lambda_1^8(G^*)=4\neq2$. If the red edge cut in graph $G^*$ is removed, $G^*$ is divided into three components instead of two, and the number of  vertices of the two components is less than 8. If the green edge cut in graph $G^*$ is removed, $G^*$ is divided into two components, the number of vertices is 8 and 10, respectively. So $\xi_8(G^*)=4\neq2=\xi^e_8(G^*)$.

Followed Harper's idea, for $r$-regular graph $G$, $|A|=m$ by Handshaking Lemma, $\beta_m(G)=r|A|-ex_m(G)$, where $ex_m(G)$ is the densest degree sum among all the $m$ vertices induced subgraphs.
For each $m\leq\lfloor|V(G)|/2\rfloor$, if one can find a subset $X_m^*\subseteq V(G)$ satisfying that $\beta_m(G)=\xi^e_m(G)=\xi_m(G)=|[X_m^*,\overline {X_m^*}]|$, $|X_m^*|=m$, and both $G[X^*]$ and $G[\overline {X^*}]$ are connected, with $2|E(G[X_m^*])|=ex_m(G)$, then these three definitions are equivalent.
Such subsets $X_m^*$ are called optimal for edge isoperimetric problem of $G$. We say that optimal subsets are nested $X_m^*\subseteq X_{m+1}^*$ if there exists a total order $\mathcal{O}$ on the set $V(G)$ such that for any $m \leq|V(G)|$ the collection of the first $m$ vertices on the order is an optimal subset
$X_m^*=\{v_{1}, v_{2}, v_{3}, \cdots, v_{i}\}$, $$
v_{1}\textless v_{2}\textless v_{3}\textless\cdots\textless v_{i}|
\textless v_{i+1}\textless\cdots\textless v_{p-2}\textless v_{p-1}\textless v_{p}.$$
For the networks involved hamming graph $K_L^n$ and its related variants in this paper, they do satisfy this property. So in the following discussion, conventionally, we use the notation $\xi_m(K_L^n)$. It is quite convenient for us to study bipartite $\mathcal{P}$-conditional edge-connectivity of $K_L^n$.
In particular, for the $h$-extra edge-connectivity,
in 2018, Zhang et al. obtained that \cite{Zhang(2018)},
for each $1\leq h\leq\lfloor|V(G)|/2\rfloor$, $$\lambda_h(G)=min\{\xi_m(G)~:~h\leq m\leq \lfloor|V(G)|/2\rfloor\}.$$
where
$\xi_{m}(G)=\min\{|[X, \overline{X}]|: X\subset V(G), |X|=m \leq\lfloor |V(G)| /2\rfloor$, and both $G[X]$ and $G[\overline{X}]$ is connected\}.
While for general cases, in 2017, L. P. Montejano and I. Sau proved that the problem of determining the exact value of $\lambda_h(G)$ is NP-complete \cite{ms2017}.

Given two graphs $G$ and $H$, the Cartesian product of $G$ and $H$ $H\times G$ is defined as $V(H\times G)=\{uv\,|\,v\in V(G), u\in V(H)\}$, $E(H\times G)=\{{ux,vy}\,|\,(u=v, {x,y}\in E(G))\;or\;(x=y, {u,v}\in E(H))\}$.

The Cartesian product of graphs $G_3$ and $G_2\times G_1$ is defined as $G_3\times G_2\times G_1$. Similarly, one can define the Cartesian product of graphs $G_n$ and $G_{n-1}\times\cdots\times G_2\times G_1$ inductively.

Each vertex $G_n\times G_{n-1}\times\cdots\times G_2\times G_1$ can be denoted by $x_nx_{n-1}\cdots x_2x_1$, $0\leq x_i\textless|V(G_i)|$, $1\leq i\leq n$.
 If $G_k=G$, for each $1\leq k\leq n$, then $G_n\times G_{n-1}\times\cdots\times G_2\times G_1=\underbrace{G\times G\times\cdots\times G\times G}_n$, denoted by $G^n$, and it is $n$-th cartesian product power graph of $G$. If $G=K_L$, the $n$-th cartesian product power graph of $K_L$ is called $L$-ary $n$-dimensional hamming graph, denoted by $K_L^n$.The vertex set of $K_L^n$ can be denoted by $X_nX_{n-1}\cdots X_2X_1=\{x_nx_{n-1}\cdots x_2x_1\,|\,x_i\in\{0,1\}, 0\leq i\leq 1\}$, $0\leq x_i\leq L-1$, $1\leq i\leq n$. For any two vertices $u=u_nu_{n-1}\cdots u_2u_1$ and $v=v_nv_{n-1}\cdots v_2v_1$ in the graph $K_L^n$,
there is an edge between $u$ and $v$ if and only if exactly one integer $j$ satisfies $u_i\neq v_i$ if $i=j$; $u_i=v_i$ if $i=j$, $j\in\{1, 2, \cdots, n\}$.
By the definition of the $K_L^n$, for given integers $0\leq k\leq L-1$, the subgraph $K_L^n[kX_{n-1}\cdots X_{2} X_{1}]$ induced by $kX_{n-1}\cdots X_{2} X_{1}$ is an $L$-ary $(n-1)$-dimensional sub-layer of $K_L^n$.
Let $z_{n}z_{n-1}\cdots z_{a+2}z_{a+1}X_{a}\cdots X_{2}X_{1}$ be the set \{$z_{n} z_{n-1}\cdots z_{a+2}z_{a+1}x_{a}\cdots x_{2}x_{1}:$ $x_{i}\in \{0, 1, 2, \cdots, L-1 \}$, $i\le 1, 2, \cdots, a$, $z_{j}$ is fixed for each $j=a+1,a+2,\cdots,n $\}. By the definition of the $K_L^n$, similarly,
one can write $K_L^n[z_{n}z_{n-1}\cdots z_{a+2}z_{a+1}X_{a}\cdots X_{2}X_{1}]$ the subgraph induced by the vertex set $z_{n}z_{n-1}\cdots z_{a+2}z_{a+1}X_{a}\cdots X_{2}X_{1}$. It is an $L$-ary $a$-dimensional sub-layer of $K_L^n$.
Thus we also use $z_nz_{n-1}\cdots z_{a+1}X_a\cdots X_2X_1$ to denote this $L$-ary $a$-dimensional sub-layer of $K_L^n$ if no confusion arises.
The lexicographic ordering $\preceq$ on $\Pi_{i=1}^n[k_i]$ is defined as follows:  $x_1x_2\dots x_n$
precedes $y_1y_2\dots y_n$
if for some $i$ we have $x_i<y_i$ and $x_j=y_j$ for $j<i$.
\section{The exact values of $\xi_m(K_L^n)$ and its construction}
Recall that a string that contains only $0's$, $1's$, $\cdots$, $(L-1)'s$ is called an $L$-base $n$-string. For a positivity integer $x=\sum_{i=1}^{n}x_iL^{i-1}$, $0\leq i\leq n$, $x_i\in\{0, 1, 2, \cdots , L-1\}$. Let $x=x_nx_{n-1}\cdots x_1$ be the form of $L$-base $n$-string. The $L$-base $n$-string denotes the $L$-base string of length $n$. For any positive integer $m$, it has a $L$-base decomposition $m=\sum_{i=0}^sa_iL^{b_i}\leq\lfloor{L^n/2}\rfloor$, $a_i\in\{1,2, \cdots , L-1\}$, $b_{0}\textgreater b_1\textgreater\cdots\textgreater b_s$. Let $S_m=\{0, 1, \cdots, m-1\}$ (under decimal representation), $|S_m|=m$, $m\leq\lfloor{L^n/2}\rfloor$, and the corresponding set $L_m^n$ be the corresponding set represented by $L$-base $n$-string. In Table \ref{tab1}, given a positive integer $m$ as an example, the calculation process of $\xi_m(G)$ is explained more concretely and vividly. At the same time, the corresponding induced subgraphs and the functions of $O(K_L)=L^{\lfloor{L\over2}\rfloor}$, for graphs $Q_4$, $FQ_4$, $AQ_4$, $K_3^n$ $K_4^n$ and $K_{10}^n$ are given. In the expression of $ex_m(G^n)$, the specific values of $I_i$ and $\delta_i$ are shown in the Table \ref{tab2}.

\begin{table*}[ht]\scriptsize
\caption{The exact values of $\xi_m(K_L^n)$ and $ex_m(K_L^n)$ and their construction }
\begin{tabular}{p{1.2cm}p{1.5cm}p{1.5cm}p{1.5cm}p{2.2cm}p{2.5cm}p{2cm}p{2.5cm}}
\hline \hline
$G^n$ &$Q_4$ & $FQ_4$ & $AQ_4$ &$K_3^4$ & $K_4^4$ &$K_5^2$&$K_{10}^2$\\
$m$&5 &5&5 &8&10&7&12\\
$S_m$& \{0, 1, 2, 3, 4\}  & \{0, 1, 2, 3, 4\} & \{0, 1, 2, 3, 4\}& \{0, 1, 2, 3, 4, 5, 6, 7\}& \{0, 1, 2, 3, 4, 5, 6, 7, 8, 9\}& \{0, 1, 2, 3, 4, 5, 6\}& \{0, 1, 2, 3, 4, 5, 6, 7, 8, 9, 10, 11\}  \\
$L_m^n$   &\{0000, 0001, 0010, 0011, 0100\}& \{0000, 0001, 0010, 0011, 0100\} & \{0000, 0001, 0010, 0011, 0100\} & \{0000, 0001, 0002, 0010, 0011, 0012, 0020, 0021\}& \{0000, 0001, 0002, 0003, 0010, 0011, 0012, 0013, 0020, 0021\}& \{00, 01, 02, 03, 04, 10, 11\}& \{00, 01, 02, 03, 04, 05, 06, 07, 08, 09, 10, 11\} \\
Decomposition of $m$   &$5=2^2+2^0$&$5=2^2+2^0$&$5=2^2+2^0$&$8=2\times3^1+2\times3^0$&$10=2\times4^1+2\times4^0$&$7=5^1+2\times5^0$&$12=10^1+2\times10^0$ \\
$ex_m(G^n)$    &$2\times2^2+2\times2^0=10$&$2\times2^2+2\times2^0=10$&$3\times2^2-2^0+4\times2^0+1=16$&$2(2\times3^1+3^1+3^0)+2\times2\time2\times3^0=28$&$3\times2\times4^1+2\times4^1+2\times2\times4^0+2\times2\times2\times4^0=42$&$4\times5^1+2\times4^1+2\times5^0+2\times2\times5^0=42$&$9\times10^1+2\times4^1+2\times1\times10^0+2\times2\times10^0=96$\\
$\xi_m(G^n)$&$4\times5-10=10$&$5\times5-10=15$&$(2\times4-1)\times5-16=19$&$2\times4\times8-28=36$&$3\times4\times10-42=78$&$4\times2\times7-42=14$&$9\times2\times12-96=120$\\
Induced graph $G^n[L_m^n]$ &\begin{minipage}[b]{0.2\columnwidth}\flushleft\raisebox{-.5\height}{\includegraphics[width=1.6cm,height=2.2cm]{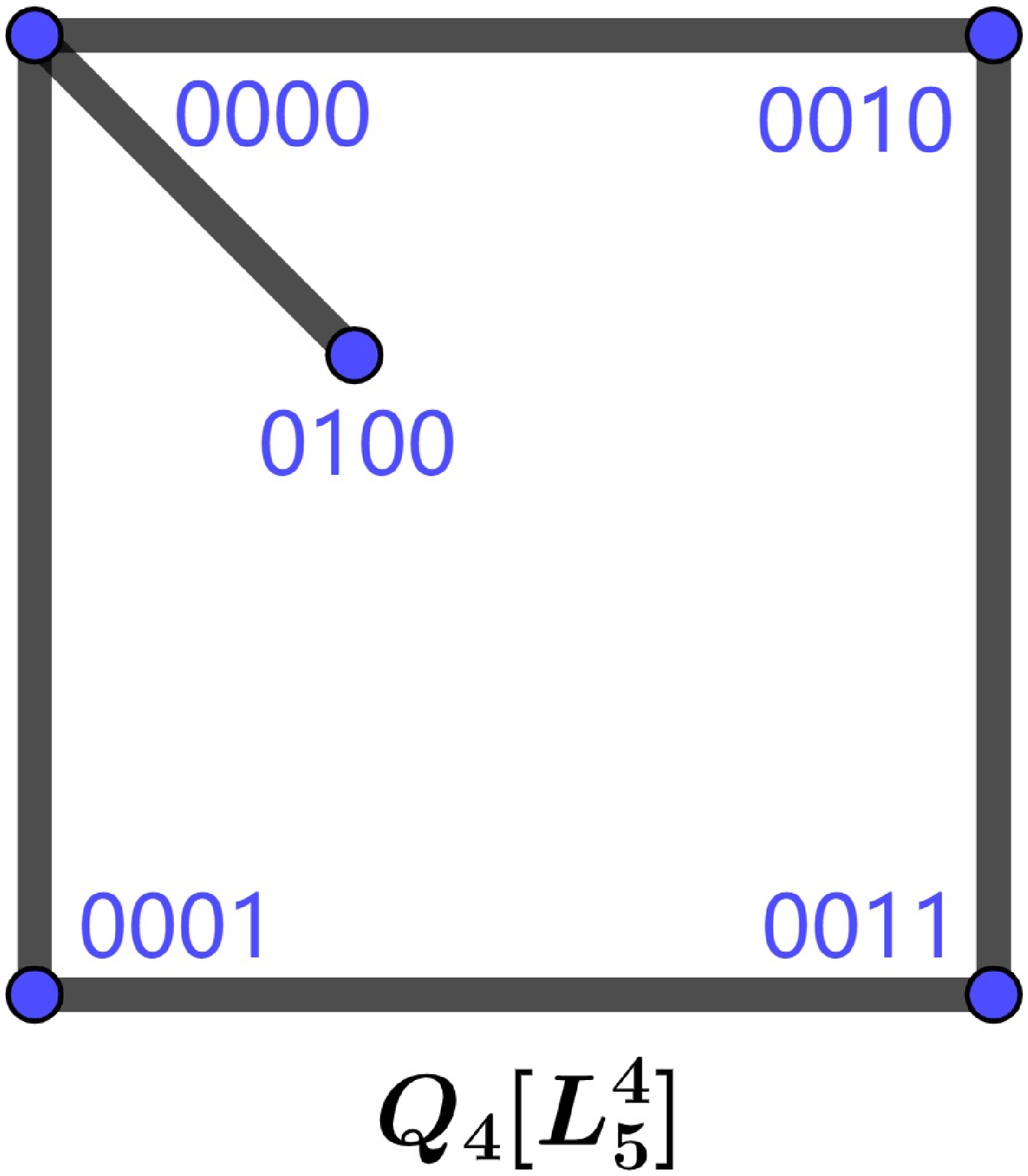}}\end{minipage}&\begin{minipage}[b]{0.2\columnwidth}\flushleft\raisebox{-.5\height}{\includegraphics[width=1.6cm,height=2.2cm]{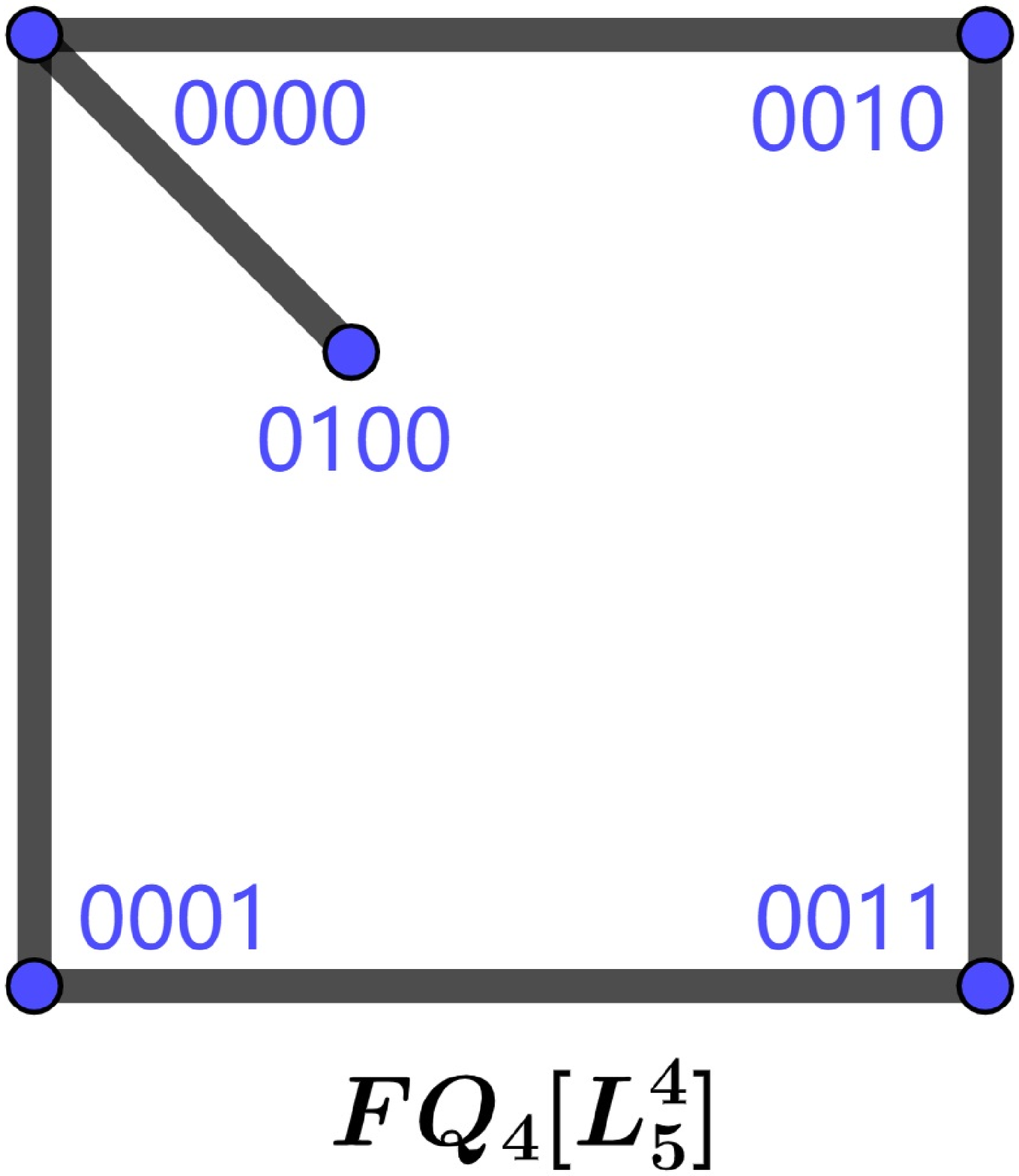}}\end{minipage}&\begin{minipage}[b]{0.2\columnwidth}\flushleft\raisebox{-.5\height}{\includegraphics[width=1.6cm,height=2.5cm]{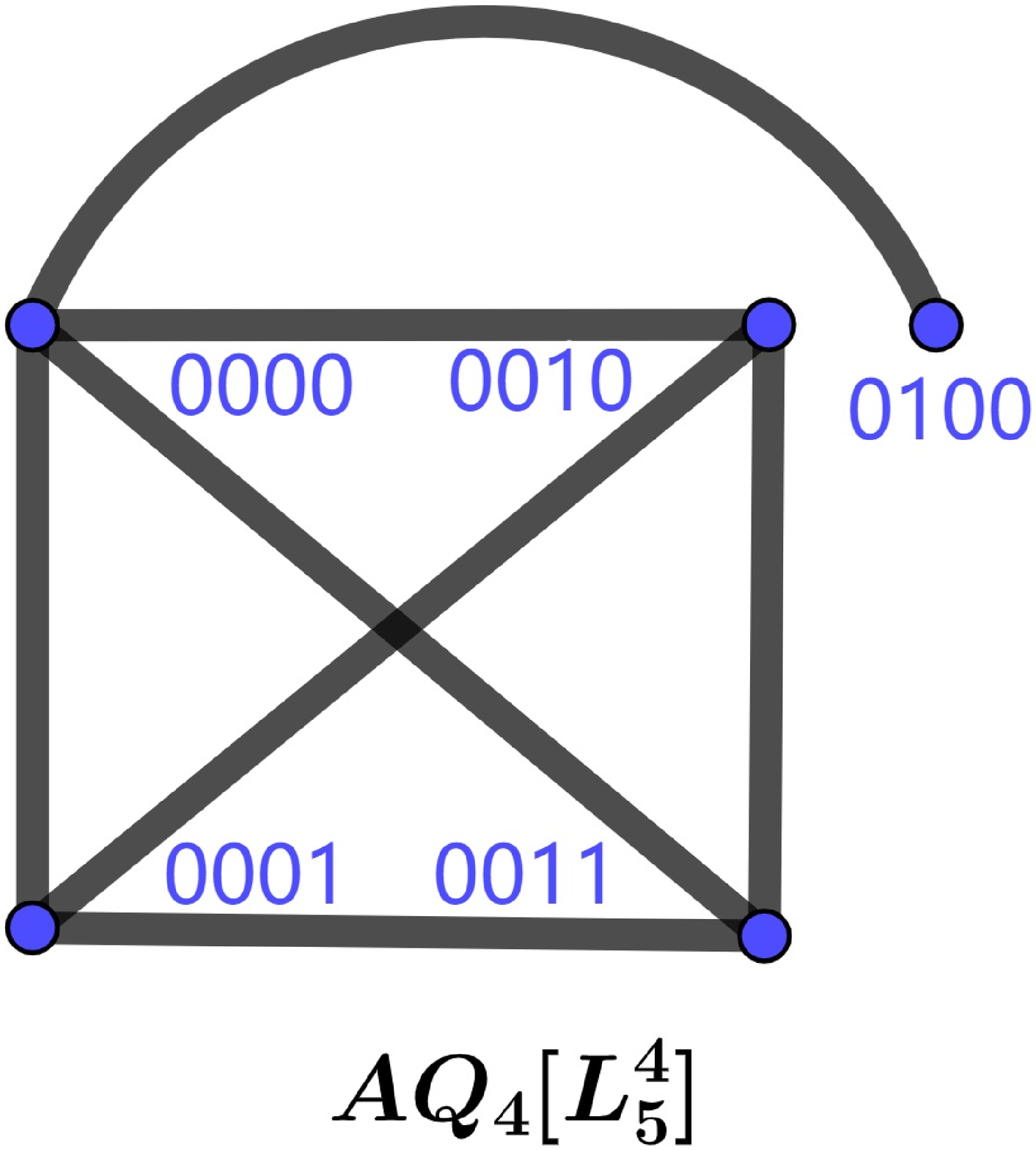}}\end{minipage}&\begin{minipage}[b]{0.2\columnwidth}\flushleft\raisebox{-.5\height}{\includegraphics[width=2.2cm,height=3cm]{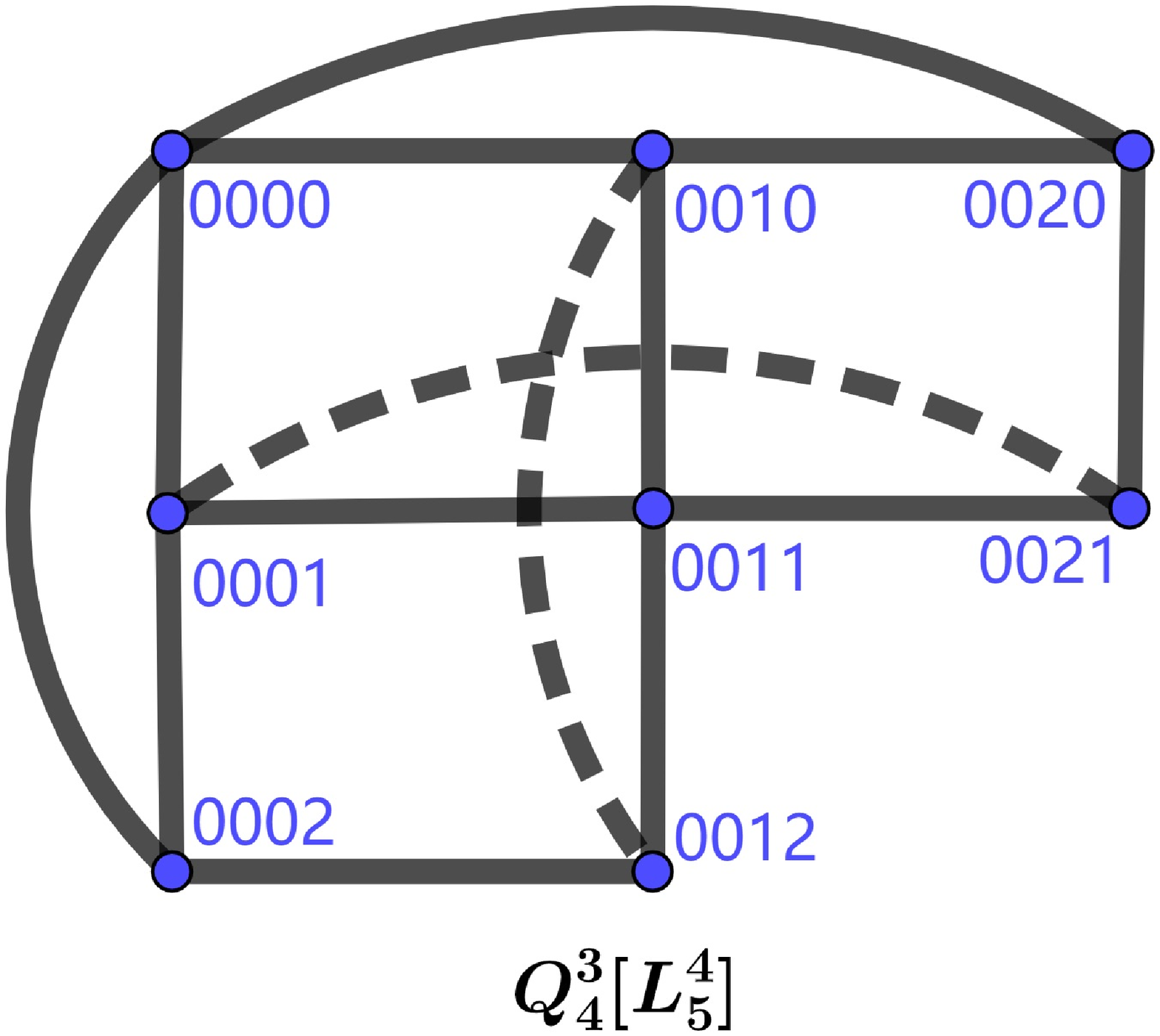}}\end{minipage}&\begin{minipage}[b]{0.2\columnwidth}\flushleft\raisebox{-.5\height}{\includegraphics[width=2.6cm,height=3.2cm]{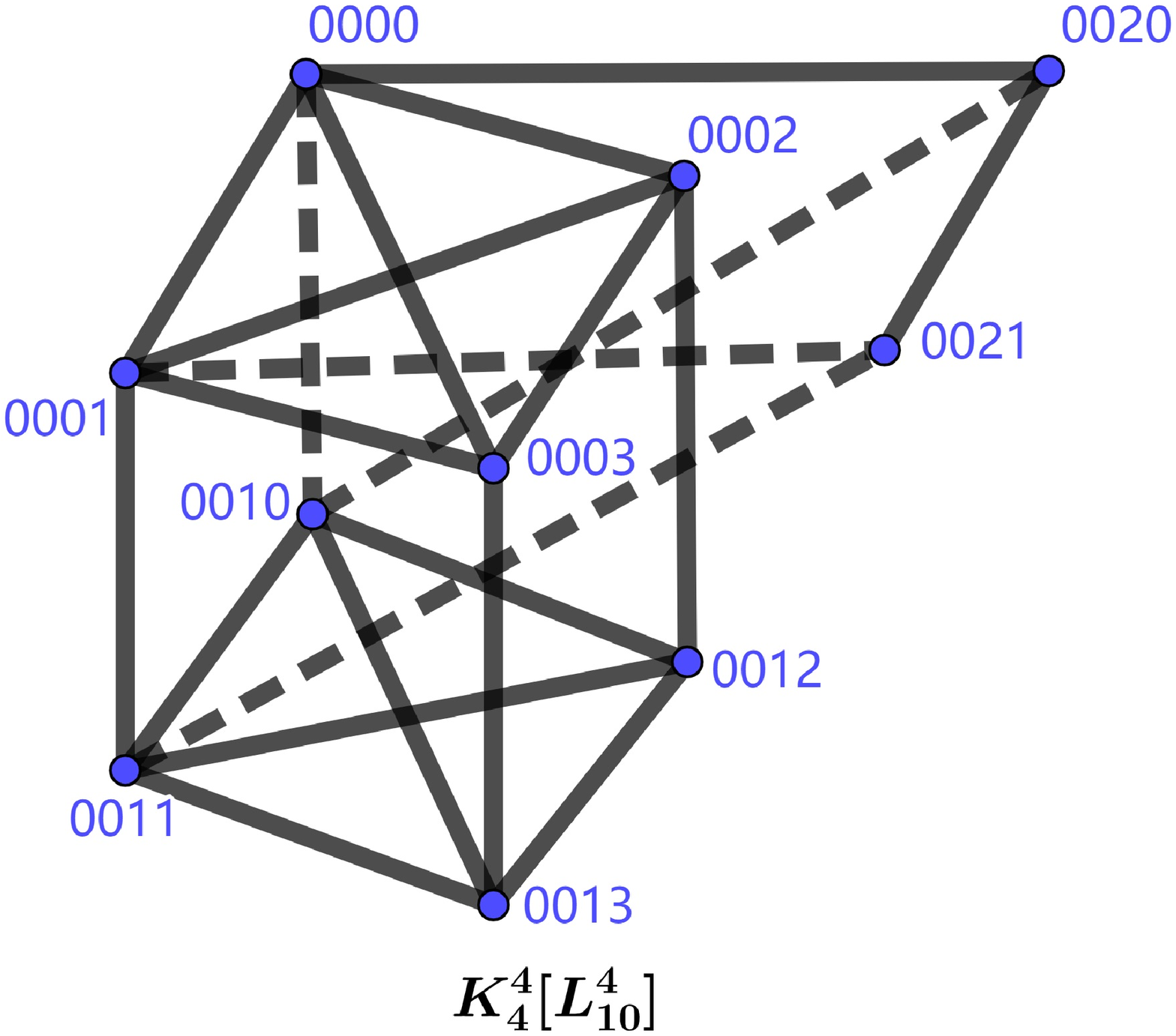}}\end{minipage}&\begin{minipage}[b]{0.2\columnwidth}\flushleft\raisebox{-.5\height}{\includegraphics[width=2cm,height=2.8cm]{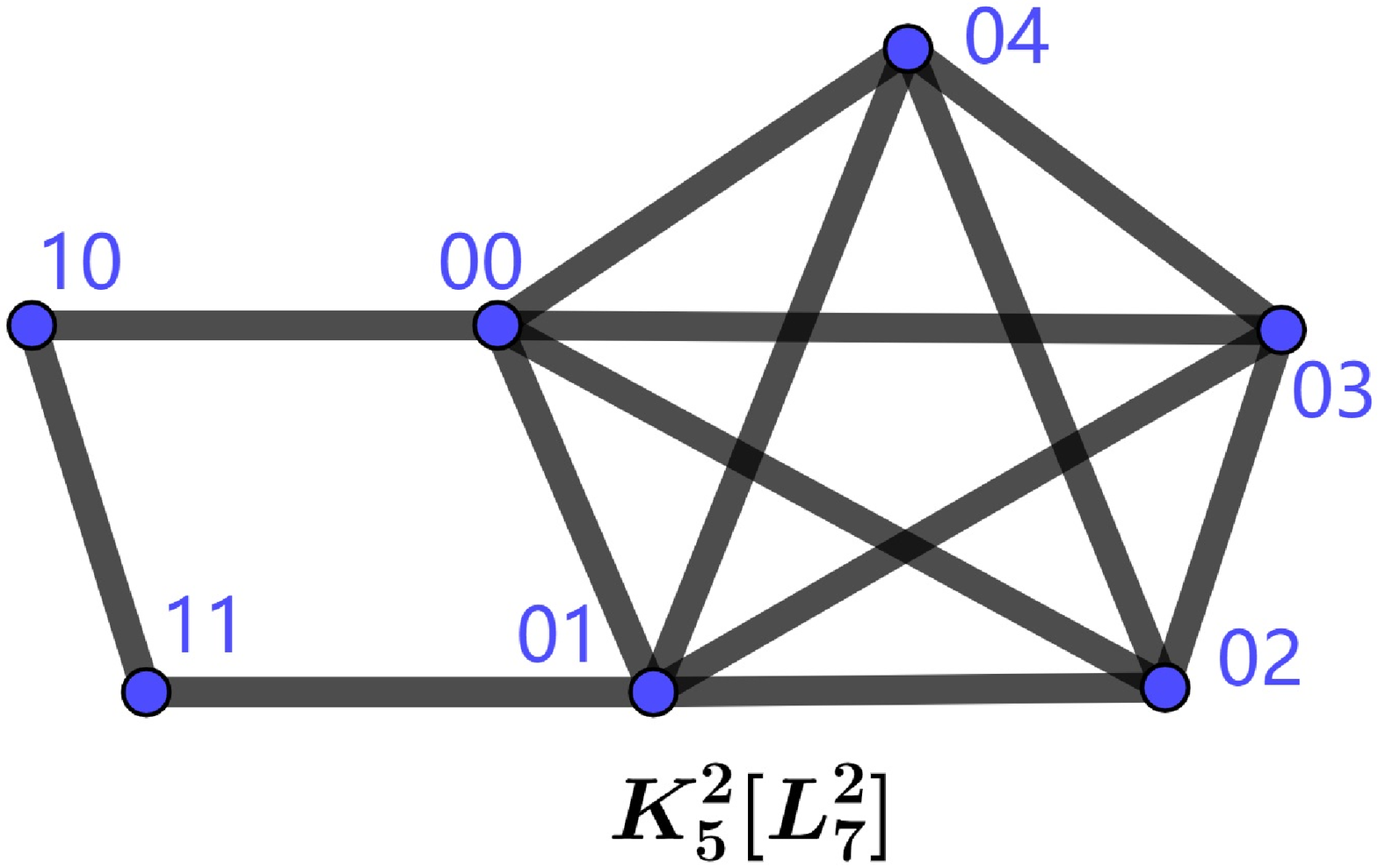}}\end{minipage}&\begin{minipage}[b]{0.2\columnwidth}\flushleft\raisebox{-.5\height}{\includegraphics[width=3cm,height=3.5cm]{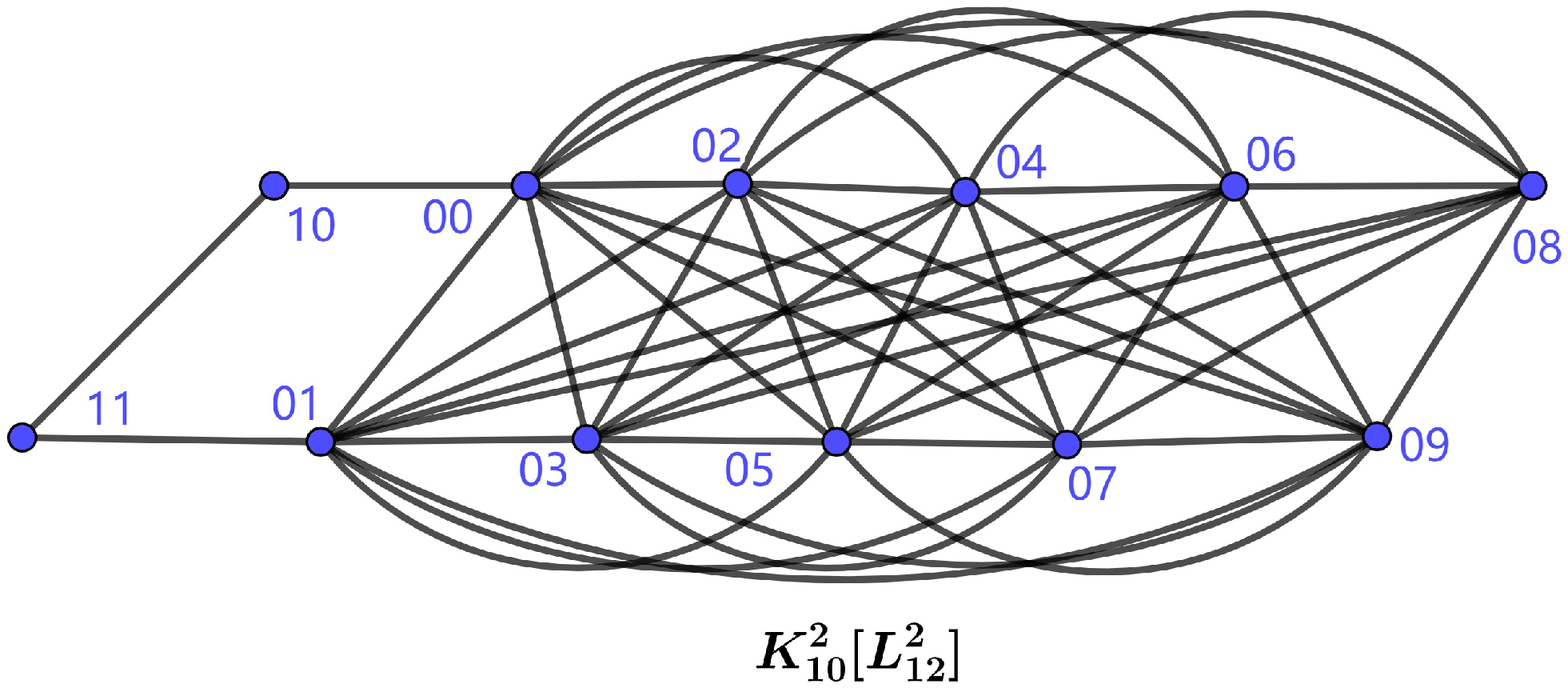}}\end{minipage}\\
$O(K_L)$&$2^{\lfloor\frac{n}{2}\rfloor}$&$2^{\lfloor\frac{n}{2}\rfloor}$&$2^{\lfloor\frac{n}{2}\rfloor}$&$3^{\lfloor\frac{n}{2}\rfloor}$&$4^{\lfloor{n\over2}\rfloor}$&$5^{\lfloor{n\over2}\rfloor}$&$10^{\lfloor{n\over2}\rfloor}$\\
$I(K_L)$&$I_0=0$, $I_1=0$&$I_0=0$, $I_1=0$&$I_0=0$, $I_1=0$&$I_0=0$, $I_1=0$, $I_2=1$&$I_0=0$, $I_1=0$, $I_2=1$, $I_3=3$&$I_0=0$, $I_1=0$, $I_2=1$, $I_3=3$, $I_4=6$&$I_0=0$, $I_1=0$, $\cdots$, $I_9=36$\\
$\delta(K_L)$&$\delta_1=0$, $\delta_2=1$&$\delta_1=0$, $\delta_2=1$&$\delta_1=0$, $\delta_2=1$&$\delta_1=0$, $\delta_2=1$, $\delta_3=2$&$\delta_1=0$, $\delta_2=1$, $\delta_3=2$, $\delta_4=3$&$\delta_1=0$, $\delta_2=1$, $\delta_3=2$, $\delta_4=3$, $\delta_5=4$&$\delta_1=0$, $\delta_2=1$, $\cdots$, $\delta_{11}=10$\\
\hline \hline
\end{tabular}
\footnotesize{$^{\rm *}$Strictly speaking, $ex_m(AQ_n)$ does not conform to this regular pattern, $
ex_{m}(AQ_{n})=\sum_{i=0}^{s}(2 t_{i}-1) 2^{t_{i}}+\sum_{i=0}^{s}4i 2^{t_{i}}+\delta$, where if $m$ is even, then $\delta=0$; if $m$ is odd, then $\delta=1$.}
\label{tab1}
\end{table*}

\begin{table*}[htbp]\scriptsize
\caption{The functions of $I_i$ and $\delta_i$ corresponding to $K_L$.}
\begin{center}
\begin{tabular*}{\hsize}{@{}@{\extracolsep{\fill}}lllllllllll}
\hline
\hline
$I_0=0$&$I_1=0$&$I_2=1$&$I_3=3$&$I_4=6$&$\cdots$&$I_i={i(i-1)/2}$&$\cdots$&$I_{L-1}={(L-2)(L-1)/2}$&$I_{L}={L(L-1)/2}$\\
&$\delta_1=0$&$\delta_2=1$&$\delta_3=2$&$\delta_4=3$&$\cdots$&$\delta_i=i-1$&$\cdots$&$\delta_{L-1}=L-2$&$\delta_L=L-1$\\
\hline
\hline
\end{tabular*}
\label{tab2}
\end{center}
\end{table*}

\begin{lem}\cite{Zhang}
Given $d$-regular connected graph $G$, the number of vertices is $|V(G)|=L$. If the lexicographic order provides the optimal solution of the edge isoperimetric problem on the power graph and $I_0(G)=0$, $I_1(G)=0$, $I_m(G)=ex_m(G)/2$, $\delta(G)=I_m(G)-I_{m-1}(G)$, $1\leq m\leq L$, then for any $1\leq m=\sum_{i=0}^sa_iL^{b_i}\leq L^n$, $b_0\textgreater b_1\textgreater\cdots\textgreater b_s$, $a_i\in\{1, 2, \cdots, L-1\}$, $0\leq i\leq s$, $ex_m(G^n)=\sum\nolimits_{i=0}^s[\delta_L(G)a_ib_iL^{b_i}+2I_{a_i}(G)L^{b_i}]$
$+2\sum\nolimits_{i=0}^{s-1}\sum\nolimits_{k=i+1}^s$ $\delta_{a_i+1}(G)a_kL^{b_k},$
and $\xi_m^e(G^n)=\delta_L(G)nm-ex_m(G^n).$
\end{lem}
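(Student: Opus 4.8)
The plan is to use the hypothesis directly: since the lexicographic order is assumed to solve the edge isoperimetric problem on $G^n$, the densest $m$-vertex induced subgraph is realized by the initial segment $S_m=\{0,1,\dots,m-1\}$ (read as $L$-base $n$-strings), so $ex_m(G^n)=2\,|E(G^n[S_m])|$. Hence it suffices to count the edges of $G^n[S_m]$ and then read off the $\xi_m^e$ formula from regularity. First I would record that $G^n$ is $\delta_L(G)n$-regular, where $\delta_L(G)=I_L(G)-I_{L-1}(G)=d$ is the common degree of $G$ (indeed $I_L(G)=|E(G)|=dL/2$ and $I_{L-1}(G)=dL/2-d$). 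The handshake identity $|[X,\overline X]|=\delta_L(G)n\,m-2|E(G^n[X])|$ for $|X|=m$ shows that minimizing the edge boundary is the same as maximizing the induced edge count, so $\beta_m(G^n)=\delta_L(G)nm-ex_m(G^n)$; since $S_m$ is connected (by induction from connectivity of $G$ and of its initial segments), the minimizer may be taken connected and therefore $\xi_m^e(G^n)=\beta_m(G^n)$. This settles the second equality once the first is established.

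The core is the edge count of $G^n[S_m]$, which I would carry out by induction on the number of terms $s$ in the decomposition $m=\sum_{i=0}^sa_iL^{b_i}$. Write $m=a_0L^{b_0}+r$ with $r=\sum_{i=1}^sa_iL^{b_i}<L^{b_0}$, and split each vertex of $S_m$ as a pair $(y,z)$, where $y\in\{0,\dots,a_0\}$ is the digit in position $b_0$ and $z$ ranges over the sub-layer $G^{b_0}$ spanned by positions $0,\dots,b_0-1$ (all higher positions being fixed at $0$). Then $S_m$ is $a_0$ full copies of $G^{b_0}$ (for $y<a_0$) together with the size-$r$ initial segment of $G^{b_0}$ (for $y=a_0$). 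Every edge of $G^n[S_m]$ differs in exactly one coordinate, and since positions above $b_0$ are constant the edges split into those inside a fiber (differing in a position $<b_0$) and those across position $b_0$.

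Next I would evaluate the two contributions. The fiber edges give $a_0\cdot\tfrac{\delta_L(G)b_0L^{b_0}}{2}+I_r(G^{b_0})$, the first term being $a_0$ copies of $|E(G^{b_0})|$ and the second the edge count of a size-$r$ initial segment of $G^{b_0}$, to which the induction hypothesis applies. For the edges across position $b_0$, I would fix $z$ and count the $G$-edges among the $y$-values present: for the $r$ values $z<r$ all of $\{0,\dots,a_0\}$ occur, contributing $I_{a_0+1}(G)$ edges each, while for the remaining $L^{b_0}-r$ values only $\{0,\dots,a_0-1\}$ occur, contributing $I_{a_0}(G)$ each (here I use that lex order is optimal on $G$ itself, so any $k$-vertex initial segment of $G$ induces $I_k(G)$ edges). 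This totals $L^{b_0}I_{a_0}(G)+r\,\delta_{a_0+1}(G)$ once $\delta_{a_0+1}(G)=I_{a_0+1}(G)-I_{a_0}(G)$ is substituted. Doubling and replacing $r=\sum_{i\ge1}a_iL^{b_i}$ turns the cross term $2r\,\delta_{a_0+1}(G)$ into the $i=0$ slice $2\sum_{k\ge1}\delta_{a_0+1}(G)a_kL^{b_k}$ of the target double sum, the fiber terms furnish the $i=0$ entry $\delta_L(G)a_0b_0L^{b_0}+2I_{a_0}(G)L^{b_0}$ of the single sum, and $ex_r(G^{b_0})$ supplies the remaining slices by induction.

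The main obstacle I anticipate is the bookkeeping of the double sum: verifying that iterating the single cross term $2r\,\delta_{a_0+1}(G)$ through all recursion levels reassembles exactly $2\sum_{i=0}^{s-1}\sum_{k=i+1}^s\delta_{a_i+1}(G)a_kL^{b_k}$, with index ranges matching and no double counting, together with the base case $s=0$ (where $r=0$, the double sum is empty, and $ex_{a_0L^{b_0}}(G^n)=\delta_L(G)a_0b_0L^{b_0}+2I_{a_0}(G)L^{b_0}$). A secondary point needing care is the connectivity used for $\xi_m^e=\beta_m$, which I would settle by the same induction, noting that the $a_0$ full copies of $G^{b_0}$ and the partial copy are joined across coordinate $b_0$ because the values $\{0,\dots,a_0\}$ induce a connected subgraph of $G$.
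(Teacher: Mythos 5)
Your proposal is correct and matches the paper's own argument in essentials: both count the edges of the lexicographic initial segment $L_m^n$ decomposed into $L$-ary sub-layers, with $\delta_L(G)b_iL^{b_i}/2$ edges inside each sub-layer, $I_{a_i}(G)L^{b_i}$ matching edges within a family, and $\delta_{a_i+1}(G)a_kL^{b_k}$ edges between families, then conclude $\xi_m^e(G^n)=\delta_L(G)nm-ex_m(G^n)$ by the handshake lemma. The only difference is presentational — you organize the count as an induction on $s$, peeling off $m=a_0L^{b_0}+r$, whereas the paper writes out all $s+1$ sub-layer families $C^{i,j_i}$ at once and counts globally, which is precisely your recursion unrolled.
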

Let $m=\sum_{i=0}^sa_iL^{b_i}$, $a_i\in\{1, 2, \cdots, L-1\}$, $n\geq b_0\textgreater b_1\textgreater\cdots\textgreater b_s\geq0$. In 2018, Zhang \cite{Zhang} constructed $(s + 1)$ $L$-ary sub-layer family whose vertex sets do not intersect each other. For $0\leq i\leq s$, the $i$-th $L$-ary sub-layer family of $K_L^n$ is denoted by $C^i$, which contains $a_i$ $L$-ary $b_i$-dimensional sub-layers $C^{i, 1}, C^{i, 2}, \cdots, C^{i, a_0}$ of $K_L^n$. For $0 \leq i \leq s$, $1\leq{ j_{i}}\leq{a_{i}}\leq L-1$, the $j_i$-th $L$-ary $b_i$-dimensional sub-layer of $K_L^n$ is denoted by $C^{i,j_i}$. Given $C^0$, which contains $a_0$ $L$-ary $b_0$-dimensional sub-layers, then the $j_0$-th $L$-ary $b_0$-dimensional sub-layer of $K_L^n$ is $C^{0,j_0}$: $$00...00j_0-1X_{b_0}X_{b_0-1}...X_2X_1.$$
The induced subgraphs of each $L$-ary $b_0$-dimensional sub-layer are isomorphic to the power graph $G^{b_0}$ of a graph $G$. They do not intersect each other and are $\delta_L(G)\times b_0$-regular. There are $I_{a_i}(G)L^{b_0}$ edges in the $0$-th $L$-ary sub-layer family.

The structure process is as follows:
{\tiny\begin{align*}
&C^{0,j_0}, 1\leq j_0\leq a_0: \overbrace{\underbrace{00\cdots00}_{if\,any}j_0-1\underbrace{X_{b_0}X_{b_0-1}\cdots X_2X_1}_{b_0}}^{n}\\
&C^{1,j_1}, 1\leq j_1\leq a_1: \overbrace{\underbrace{00\cdots00}_{if\,any}a_0\underbrace{\underbrace{00\cdots00}_{if\,any}j_1-1\underbrace{X_{b_1}X_{b_1-1}\cdots X_2X_1}_{b_1}}_{b_0}}^{n}\\
&C^{2,j_2}, 1\leq j_2\leq a_2: \overbrace{\underbrace{00\cdots00}_{if\,any}a_0\underbrace{\underbrace{00\cdots00}_{if\,any}a_1\underbrace{\underbrace{00\cdots00}_{if\,any}j_2-1\underbrace{X_{b_2}X_{b_2-1}\cdots X_2X_1}_{b_2}}_{b_1}}_{b_0}}^{n}\\
&\cdots.
\end{align*}}

For $C^{i,j_i}, i>0$, the $(b_{i-1}+1)$-th coordinate $(j_{i-1}-1)$ of $C^{i-1,a_i-1}$ is changed to $a_{i-1}$. Let the $(b_i+1)$-th coordinate of $C^{i,j_i}$ be $(j_i-1)$. And let the coordinate of the $(b_i+2)$-th to the $n$-th be 0 expect for the $(b_{i-1}+1)$-th, if any. Through this construction method, $L_{m}=\bigcup\limits_{0 \leq i \leq s, 1 \leq j_{i} \leq a_{i}} V\left(C^{i, j_{i}}\right)$ is obtained.

Let $V\left(\mathcal{C}^{i}\right)=: \bigcup\limits_{1 \leq j_{i} \leq a_{i}} V\left(C^{i, j_{i}}\right)$ and $G_{m}^{1^{*}}=: G^{n}\left[L_{m}\right]$.

Each $L$-ary sub-layer $C^{i,j_i}$ has $\delta_L(G)b_iL^{b_i}/2$ edges. For $0\leq i\leq s$, $1\leq j_i\leq a_i\leq L-1$, given $i$, if $a_i=1$, $C^i$ contains only one $L$-ary $b_i$-sub-layer; if $a_i=2$, then between $C^{i,a_i}$ and $C^{i,a_i-1}$ has a matching of size $L^{b_i}$; and if $a_i\textgreater2$, $a_i$ and $a_t$ are adjacent in the graph $G$, $a_t\textless a_i$, then between $C^{i,a_i}$ and $C^{i,a_t}$ has a matching of size $L^{b_i}$. There are $I_{a_i}(G)$ groups of such matches exist in $C^i$ $L$-ary $b_i$-dimensional sub-layer of $L$-ary $b_i$-dimensional sub-layer family.

For each $0\leq i\leq k\leq s$, there are $\delta_{a_i+1}(G)a_kL^{b_k}$ edges between sub-layer family $V(C^i)$ and sub-layer family $V(C^k)$. It can be counted that $C_m^{1^*}$ contains $\sum_{i=0}^s[0.5\delta_L(G)b_iL^{b_i}$ $+(I_{a_i}(G))L^{b_i}]$ edges inside $C^i$. The number of edges between $C^i$ is $\sum_{i=0}^{s-1}\sum_{k=i+1}^{s}\delta_{a_i+1}(G)a_kL^{b_k}$. Since $G^n$ is $\delta_L(G)n$-regular, from the Handshaking lemma, $\xi_m^e(G^n)=$ $\delta_L(G)nm-ex_m(G^n)$. The number of edges of $G^n[L_m]$ is $\sum_{i=0}^s[0.5\delta_L(G)a_ib_iL^{b_i}+I_{a_i}(G)L^{b_i}]+\sum_{i=0}^{s-1}\sum_{k=i+1}^s\delta_{a_i+1}(G)a_kL^{b_k}$.

As $G=K_L$ is $L-1$-regular connected graph, the number of vertices is $|V(K_L)|=L$. The lexicographic order provides the optimal solution of the edge isoperimetric problem on $K_L^n$ and $I_0(K_L)=0$, $I_1(K_L)=0$, $2I_m(K_L)=ex_m(G)=m(m-1)$, $\delta_m(K_L)=I_m(K_L)-I_{m-1}(K_L)=m-1$, $1\leq m\leq L$. Note that $\delta_L(K_L)
L-1=r$.
\begin{cor}\cite{Zhang}
For any integers $1\leq m=\sum_{i=0}^sa_iL^{b_i}\leq L^n$, $b_0\textgreater b_1\textgreater\cdots\textgreater b_s$, $a_i\in\{1, 2, \cdots, L-1\}$, $0\leq i\leq s$, $ex_m(K_L^n)=\sum\nolimits_{i=0}^s[(L-1)a_ib_iL^{b_i}+(a_i-1)a_iL^{b_i}]$
$+2\sum\nolimits_{i=0}^{s-1}\sum\nolimits_{k=i+1}^s$ $a_ia_kL^{b_k},$
and $\xi_m^e(K_L^n)=(L-1)nm-ex_m(K_L^n).$
\end{cor}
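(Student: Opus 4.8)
The plan is to observe that this corollary is nothing but the specialization of the preceding lemma to the complete graph $G=K_L$, so the entire argument reduces to two tasks: (i) checking that $K_L$ satisfies the hypotheses of the lemma, and (ii) substituting the combinatorial parameters of $K_L$ into the two general formulas $ex_m(G^n)$ and $\xi_m^e(G^n)$. No new extremal construction is needed, since the lemma already supplies the closed forms for an arbitrary base graph $G$ whose powers are solved by the lexicographic order.

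First I would verify the hypotheses. The complete graph $K_L$ is connected and $(L-1)$-regular on $L$ vertices, so the degree parameter $d=L-1$ is consistent with $|V(K_L)|=L$. The one genuinely nontrivial hypothesis is that the lexicographic order furnishes an optimal nested family for the edge isoperimetric problem on the power graph $K_L^n$; this is exactly the content of the cited extremal result for Hamming graphs (the analogue of Lindsey's and Harper's theorems), which I would invoke directly rather than reprove.

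Next I would compute the parameters $I_m$ and $\delta_m$ for $K_L$. Since the densest $m$-vertex induced subgraph of $K_L$ is the clique $K_m$, we have $ex_m(K_L)=m(m-1)$, whence $I_m(K_L)=ex_m(K_L)/2=m(m-1)/2$ with $I_0(K_L)=I_1(K_L)=0$, and $\delta_m(K_L)=I_m(K_L)-I_{m-1}(K_L)=m-1$; in particular $\delta_L(K_L)=L-1$.

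Finally I would substitute these values into the lemma. In the expression for $ex_m(G^n)$ one replaces $\delta_L(G)$ by $L-1$, the quantity $2I_{a_i}(G)$ by $a_i(a_i-1)=(a_i-1)a_i$, and $\delta_{a_i+1}(G)$ by $a_i$; this turns the three summand types into $(L-1)a_ib_iL^{b_i}$, $(a_i-1)a_iL^{b_i}$, and $a_ia_kL^{b_k}$ respectively, reproducing the claimed formula for $ex_m(K_L^n)$. Replacing $\delta_L(G)$ by $L-1$ in $\xi_m^e(G^n)=\delta_L(G)nm-ex_m(G^n)$ then yields $\xi_m^e(K_L^n)=(L-1)nm-ex_m(K_L^n)$, completing the proof. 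The only real obstacle is the optimality-of-lexicographic-order hypothesis, which is the deep input supplied by the cited theorem; everything after that is routine bookkeeping and arithmetic simplification.
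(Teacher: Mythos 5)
Your proposal is correct and matches the paper's own route: the paper likewise obtains the corollary by specializing the preceding lemma to $G=K_L$, noting that $K_L$ is $(L-1)$-regular and connected on $L$ vertices, that the lexicographic order is optimal for the edge isoperimetric problem on $K_L^n$ (the cited Lindsey/Harper-type input), and computing $2I_m(K_L)=m(m-1)$, $\delta_m(K_L)=m-1$, so $\delta_L(K_L)=L-1$, $2I_{a_i}(K_L)=(a_i-1)a_i$, and $\delta_{a_i+1}(K_L)=a_i$ before substituting into the formulas for $ex_m(G^n)$ and $\xi_m^e(G^n)$. Your bookkeeping reproduces exactly the claimed expressions, so there is nothing to add.
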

\begin{cor}\label{bdk2}
\par(1)~(Harper 1964 \cite{harper1964optimal} and Li and Yang \cite{Li(2013)} 2013 ) $\xi_m(K_2^n)=nm-ex_m(K_2^n)=nm-[\sum_{i=0}^{s}b_i2^{b_i}+\sum_{i=0}^{s}2\cdot i\cdot2^{b_i}],$ for $m=\sum_{i=0}^{s}2^{b_i}\leq2^{n}$, $b_0\textgreater b_1\textgreater\dots\textgreater b_s$.
\par(2)~$\xi_m(K_3^n)=2nm-ex_m(K_3^n)=2nm-\{\sum_{i=0}^{s}[2a_ib_i3^{b_i}$ $+2(a_{i}-1)3^{b_{i}}]
+2\sum_{i=0}^{s-1}\sum_{j=i+1}^{s} a_{i}a_{j}3^{b_j}\}$, for $1\leq m=\sum_{i=0}^{s} a_i3^{b_i} \leq 3^n$, $b_0\textgreater b_1\textgreater\dots\textgreater b_s$,
$a_i\in\{1,2\}$, $0\leq i\leq s$.
\end{cor}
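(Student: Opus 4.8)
The plan is to derive both identities as direct specializations of the general formula for $ex_m(K_L^n)$ and $\xi_m^e(K_L^n)$ established in the preceding corollary, taking $L=2$ and $L=3$ respectively, and then simplifying the resulting expressions. Since both $K_2^n$ and $K_3^n$ admit the lexicographic ordering (equivalently, the nested sub-layer construction $L_m^n$) as an optimal solution whose induced subgraph and complement are simultaneously connected for $m\le\lfloor L^n/2\rfloor$, the equalities $\beta_m=\xi_m^e=\xi_m$ hold, so it suffices to specialize the formula for $\xi_m^e$ and read off $\xi_m$.

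For part (1), observe that when $L=2$ the only admissible value of each coefficient is $a_i=1$, so the decomposition $m=\sum_{i=0}^s 2^{b_i}$ is just the binary expansion of $m$. Substituting $L=2$, $a_i=1$ and $L-1=1$ into the general formula annihilates the self-term $(a_i-1)a_iL^{b_i}$ and reduces $ex_m$ to $\sum_{i=0}^s b_i2^{b_i}+2\sum_{i=0}^{s-1}\sum_{k=i+1}^s 2^{b_k}$. The remaining step is to rewrite the double sum: for each fixed $k$ the index $i$ ranges over $0\le i\le k-1$, so the term $2^{b_k}$ is counted exactly $k$ times, giving $\sum_{i=0}^{s-1}\sum_{k=i+1}^s 2^{b_k}=\sum_{k=0}^s k\,2^{b_k}$. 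This yields $ex_m(K_2^n)=\sum_{i=0}^s b_i2^{b_i}+\sum_{i=0}^s 2i\,2^{b_i}$ and hence $\xi_m(K_2^n)=nm-ex_m(K_2^n)$, recovering the Harper--Li--Yang value.

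For part (2), substitute $L=3$, so that $L-1=2$ and $a_i\in\{1,2\}$. The only point needing verification is the equality of the two forms of the quadratic self-term: the general formula produces $(a_i-1)a_i\,3^{b_i}$, while the claimed expression has $2(a_i-1)\,3^{b_i}$. These agree because $(a_i-1)a_i=2(a_i-1)$ for every $a_i\in\{1,2\}$ (both sides vanish when $a_i=1$ and equal $2$ when $a_i=2$). The cross terms $2\sum_{i<k}a_ia_k3^{b_k}$ and the leading terms $2a_ib_i3^{b_i}$ carry over verbatim, giving $\xi_m(K_3^n)=2nm-ex_m(K_3^n)$ in the stated form.

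The computation itself is routine; the substantive content lies outside the algebra, in the justification that $\xi_m=\xi_m^e$ for these two families. This is where I would be most careful: one must confirm that the optimal set $L_m^n$ guaranteed by the preceding corollary is \emph{bilaterally} connected, i.e. that $K_L^n[\overline{L_m^n}]$ is connected as well, for $m\le\lfloor L^n/2\rfloor$. For $L=2$ this is exactly Harper's original observation, and for $L=3$ it follows from the nestedness of the family $\{L_m^n\}$ under the lexicographic order together with the sub-layer decomposition; once this is in hand, the two displayed identities are immediate.
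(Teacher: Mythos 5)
Your proposal is correct and takes essentially the same route as the paper, which presents this corollary as a direct specialization of Zhang's general formula for $ex_m(K_L^n)$ and $\xi_m^e(K_L^n)$ at $L=2$ and $L=3$, with exactly your simplifications: $a_i=1$ collapsing the cross-term double sum to $\sum_{i=0}^{s}2i\,2^{b_i}$, and $(a_i-1)a_i=2(a_i-1)$ for $a_i\in\{1,2\}$. Your added care about the identification $\xi_m=\xi_m^e$ via bilateral connectivity of the nested lexicographic sets $L_m^n$ is precisely the property the paper asserts for $K_L^n$ and its variants in Section II, so nothing is missing.
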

The $n$-dimensional bijective connection networks $\mathcal{B}_n$ (also called hypercube-like networks) are a class of cube-based networks including several well known interconnection networks like hypercubes (denoted by $Q_{n}$), twisted cubes \cite{tc2} (denoted by $TQ_{n})$, crossed cubes \cite{hld} (denoted by $CQ_{n}$), spined cubes $SQ_n$, parity cubes $PQ_n$, $Z$-cubes $ZQ_n$ \cite{x}, varietal cubes $VQ_n$, M$\ddot{o}$bius cubes \cite{mc} (denoted by $MQ_{n}$), locally twisted cubes \cite{ltc} (denoted by $LTQ_{n}$), restricted hypercube-like networks
$RHLN_n$, generalized cubes $GQ_n$, generalized twisted cubes \cite{tc} (denoted by $GTQ_{n}$) and Mcubes \cite{m} (denoted by $MCQ_{n}$) as members.

In 2013, Zhang et al. obtained that $\xi_m(\mathcal{B}_n)=\xi_m(K_2^n)$ \cite{Zhang(2014)}.
\begin{cor}\label{bdk2}
$\xi_m(\mathcal{B}_n)=\xi_m(K_2^n)$ for each $m\leq2^{n}$.
\end{cor}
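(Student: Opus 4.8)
The plan is to reduce the statement about the bilateral-connected parameter $\xi_m$ to the purely extremal quantity $ex_m$, and then to prove $ex_m(\mathcal{B}_n)=ex_m(K_2^n)$ by induction on the recursive structure of the hypercube-like networks. Recall that every $G\in\mathcal{B}_n$ is $n$-regular on $2^n$ vertices, so by the Handshaking Lemma $\beta_m(G)=nm-ex_m(G)$, and since imposing connectivity only restricts the minimization we always have $\beta_m(G)\le\xi_m^e(G)\le\xi_m(G)$. For the hypercube, Harper's nested optimal sets are connected on both sides, whence $\xi_m(K_2^n)=\beta_m(K_2^n)=nm-ex_m(K_2^n)$. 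Consequently it suffices to establish two things: (i) $ex_m(G)=ex_m(K_2^n)$ for every $G\in\mathcal{B}_n$, and (ii) that $G$ actually contains an $m$-set $X$ with both $G[X]$ and $G[\overline{X}]$ connected and $2|E(G[X])|=ex_m(K_2^n)$. Together these pin $\xi_m(G)$ between $\beta_m(K_2^n)$ and the size of the exhibited cut, forcing equality.

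For the induction I would use the defining decomposition $V(G)=V_0\cup V_1$ with $G[V_0]=G_0$, $G[V_1]=G_1\in\mathcal{B}_{n-1}$ joined by a perfect matching $M$. For any $X$ with $|X\cap V_i|=m_i$ one has $2|E(G[X])|=2|E(G_0[X\cap V_0])|+2|E(G_1[X\cap V_1])|+2|M\cap[X\cap V_0,X\cap V_1]|$, and since $M$ is a matching the last term is at most $2\min(m_0,m_1)$. Applying the inductive hypothesis $ex_{m_i}(G_i)=ex_{m_i}(K_2^{n-1})$ gives the upper bound $ex_m(G)\le\max_{m_0+m_1=m}[ex_{m_0}(K_2^{n-1})+ex_{m_1}(K_2^{n-1})+2\min(m_0,m_1)]$, and the right-hand side equals $ex_m(K_2^n)$, as one sees by specializing the same bound to $K_2^n$ with the identity matching and Harper's nested initial segments \cite{harper1964optimal}. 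To see the bound is attained in a general $G$, I would use the full-half trick: for $m>2^{n-1}$ take $X=V_0\cup X_1$ with $X_1$ a densest set of size $m-2^{n-1}$ in $G_1$; because $X$ contains all of $V_0$, every one of the $m-2^{n-1}$ matching edges incident to $X_1$ is automatically present irrespective of how $M$ is wired, so $X$ realizes the maximum. For $m\le2^{n-1}$ one instead places the densest set inside a single half $G_0$, where $ex_m(K_2^{n-1})=ex_m(K_2^n)$, so that no crossing matching edge is needed at all.

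To obtain (ii) on the range $m\le2^{n-1}$ that controls $\xi_m$, I would carry through the induction the stronger assertion that each half already contains a bilateral-connected densest $m$-set; such a set exists for $m\le2^{n-2}$ by the inductive statement for $\mathcal{B}_{n-1}$ and, for $2^{n-2}<m\le2^{n-1}$, by taking complements inside $G_0$ via $\beta_m=\beta_{2^{n-1}-m}$. Placing this set $X\subseteq V_0$ into $G$, the graph $G[X]=G_0[X]$ is connected, while $G[\overline{X}]$ consists of the connected $G_1$ together with $G_0[V_0\setminus X]$ joined to it through matching edges, hence is also connected; since $X$ lies in one subcube, $2|E(G[X])|=ex_m(K_2^n)$ and the induced cut has size exactly $\beta_m(K_2^n)=\xi_m(K_2^n)$. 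This yields $\xi_m(G)\le\xi_m(K_2^n)\le\xi_m(G)$, and the remaining case $2^{n-1}<m\le2^n$ follows from the complementary symmetry $\xi_m=\xi_{2^n-m}$. The main obstacle is precisely achievability against an adversarial matching: the extremal set of $K_2^n$ is engineered for the identity matching, and the crux is the observation that saturating an entire subcube-half forces all crossing matching edges to appear no matter how $M$ is chosen, which is what makes $ex_m$, and therefore $\xi_m$, invariant across the whole class $\mathcal{B}_n$.
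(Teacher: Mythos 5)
Your argument is correct in substance, but note first that the paper contains no proof of this corollary at all: it is quoted directly from Zhang et al.\ \cite{Zhang(2014)}, so your proposal should be measured against that reference rather than against anything in the present text, and what you wrote is in essence a faithful, self-contained reconstruction of the argument used there. The two pillars are right and properly ordered. For (i), the perfect-matching decomposition gives $2|E(G[X])|\le ex_{m_0}(K_2^{n-1})+ex_{m_1}(K_2^{n-1})+2\min(m_0,m_1)$, and the identity $\max_{m_0+m_1=m}\bigl[ex_{m_0}(K_2^{n-1})+ex_{m_1}(K_2^{n-1})+2\min(m_0,m_1)\bigr]=ex_m(K_2^n)$ does hold for exactly the reason you indicate: specializing the bound to $K_2^n$ yields the inequality $ex_m(K_2^n)\le\max$, while aligned Harper initial segments in the two halves realize every split $(m_0,m_1)$, yielding the reverse inequality; your full-half trick then correctly neutralizes the adversarial matching for achievability when $m>2^{n-1}$, and $ex_m(K_2^{n-1})=ex_m(K_2^n)$ for $m\le 2^{n-1}$ handles the rest. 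For (ii), the strengthened induction hypothesis (a densest $m$-set in each half that is bilaterally connected) is the right invariant to carry, and the complementation step via $\beta_m(G_0)=\beta_{2^{n-1}-m}(G_0)$ in the regular graph $G_0$ legitimately covers $2^{n-2}<m\le 2^{n-1}$. Two refinements: first, connectivity of $G[\overline{X}]$ is automatic once $X\subseteq V_0$, since every vertex of $V_0\setminus X$ keeps its matching neighbour in the connected $G_1$; so connectivity of $G_0[V_0\setminus X]$ is not needed at that step, though you do still need it one level up for the complementation trick, so the strengthened hypothesis cannot be discarded entirely. Second, the corollary's stated range $m\le 2^n$ is loose, since $\xi_m$ is defined only for $m\le\lfloor|V(G)|/2\rfloor=2^{n-1}$; your closing appeal to $\xi_m=\xi_{2^n-m}$ is therefore a harmless convention rather than a needed case. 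Modulo spelling out the trivial base case, your sketch closes.
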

So our main results can be used for the $n$-dimensional bijective connection networks.

\section{Some useful properties of the functions $\xi_m(K_L^n)$ and $ex_m(K_L^n)$}

Before given our main results on $\mathcal{P}_i^k$-conditional edge-connectivity of $K_L^n$, we will give some interesting properties of functions functions $\xi_m(K_L^n)$ and $ex_m(K_L^n)$. If there is no specific explanation in the following, we assume that $n$ is a positive integer $n\geq2$.

\begin{lem}\label{fenjie}
Let $1\leq h=h_1+h_2\leq L^{\lfloor{n\over2}\rfloor}$, where $h_1=\sum_{i=0}^{t}a_iL^{b_i}$, $h_2=\sum_{i=0}^{s}a_i^{'}L^{{b_i}^{'}}$, $a_i, a_i^{'}\in\{1, 2, \cdots, L-1\}$, and $ b_0\textgreater b_1\textgreater ...\textgreater b_t\textgreater b_0^{'}\textgreater ...\textgreater b_s^{'}$. Then $ex_h(K_{L}^{n})=ex_{h_1}(K_{L}^{n})+ex_{h_2}(K_{L}^{n})+2(\sum\nolimits_{i=0}^{t}a_i)h_2$.
\end{lem}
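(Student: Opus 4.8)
The plan is to apply the explicit closed form for $ex_m(K_L^n)$ furnished by the Corollary above to each of $ex_h$, $ex_{h_1}$ and $ex_{h_2}$ and then match the resulting terms. The first step is to observe that the hypothesis $b_0 > b_1 > \cdots > b_t > b_0' > \cdots > b_s'$, together with all coefficients lying in $\{1,\dots,L-1\}$, means that
$$h = a_0L^{b_0}+\cdots+a_tL^{b_t}+a_0'L^{b_0'}+\cdots+a_s'L^{b_s'}$$
is itself a bona fide $L$-base decomposition of $h$: it is simply the concatenation of the decompositions of $h_1$ and $h_2$, with strictly decreasing exponents throughout and coefficients still in the admissible range. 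Since $h\leq L^{\lfloor n/2\rfloor}\leq L^n$, the Corollary applies verbatim to $ex_h$ using this merged list of $t+s+2$ terms.

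Next I would split the closed-form expression for $ex_h$ along its natural block structure, the first $t+1$ terms coming from $h_1$ and the last $s+1$ from $h_2$. The ``diagonal'' part of the formula, namely $\sum[(L-1)a_ib_iL^{b_i}+(a_i-1)a_iL^{b_i}]$, is a single sum over the merged list, so it splits immediately into the corresponding $h_1$-sum plus the corresponding $h_2$-sum. The double ``cross'' sum $2\sum_{i<k}a_ia_kL^{b_k}$ over the merged list then decomposes into three groups of pairs $(i,k)$ with $i<k$: those with both indices in the $h_1$-block, those with both in the $h_2$-block, and those that straddle the two blocks. The two within-block groups, when combined with the respective diagonal contributions, reassemble exactly into $ex_{h_1}(K_L^n)$ and $ex_{h_2}(K_L^n)$; this is immediate because the formula is ``local'', depending only on the coefficients and exponents within each block, which are literally the decomposition data of $h_1$ and of $h_2$.

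The only remaining term is the straddling sum, and here lies the crux. For a straddling pair the smaller-exponent index $k$ always lies in the $h_2$-block, so its weight $L^{b_k}$ is one of the $h_2$-weights $L^{b_j'}$, while the larger-exponent index $i$ ranges over the whole $h_1$-block. Thus the straddling sum equals
$$2\sum_{i=0}^{t}\sum_{j=0}^{s}a_ia_j'L^{b_j'}=2\Bigl(\sum_{i=0}^{t}a_i\Bigr)\sum_{j=0}^{s}a_j'L^{b_j'}=2\Bigl(\sum_{i=0}^{t}a_i\Bigr)h_2,$$
which is precisely the claimed extra term. The main obstacle is purely bookkeeping: one must pin down the convention that in the cross sum the weight $L^{b_k}$ is attached to the index with the \emph{smaller} exponent (the larger index $k$), since this asymmetry is exactly what forces the factor $h_2$ rather than $h_1$ to appear, and what lets the coefficient sum $\sum_{i=0}^{t}a_i$ factor out cleanly. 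Once the merged indexing is fixed, the three-way split of the cross sum together with this factorization yields $ex_h=ex_{h_1}+ex_{h_2}+2(\sum_{i=0}^{t}a_i)h_2$, completing the proof.
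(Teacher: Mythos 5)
Your proposal is correct and follows essentially the same route as the paper: both merge the two decompositions into a single valid $L$-base decomposition of $h$ (the paper does this via the relabeling $a_{t+1}=a_0',\dots,a_{t+s+1}=a_s'$), apply the closed form for $ex_h(K_L^n)$, and split the cross sum into within-block pairs, which reassemble $ex_{h_1}$ and $ex_{h_2}$, plus the straddling pairs, which factor as $2\bigl(\sum_{i=0}^{t}a_i\bigr)h_2$. Your explicit remark that the weight $L^{b_k}$ attaches to the smaller-exponent index is exactly the bookkeeping the paper's computation relies on, so nothing is missing.
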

\begin{proof}
According to the expression of $ex_m(K_L^n)$, we have $ex_{h_1}(K_{L}^{n})=\sum\nolimits_{i=0}^t[(L-1)a_ib_iL^{b_i}+2I_{a_i}L^{b_i}]$ $+2\sum\nolimits_{i=0}^{t-1}\sum\nolimits_{k=i+1}^{t}a_ia_kL^{b_k}$ and $ex_{h_2}(K_{L}^{n})=\sum\nolimits_{i=0}^{s}[(L-1)a_i^{'}b_i^{'}L^{b_i^{'}}+2I_{a_i^{'}}L^{b_i^{'}}]+2\sum\nolimits_{i=t+1}^{t+s}\sum\nolimits_{k=i+1}^{t+s+1}a_i^{'}a_k^{'}L^{b_k^{'}}$. Let $a_{t+1}=a_0'$, $a_{t+2}=a_1'$, $\cdots$, $a_{t+s+1}=a_s'$, $b_{t+1}=b_0'$, $b_{t+2}=b_1'$, $\cdots$, $b_{t+s+1}=b_s'$. Then $h_2=\sum\nolimits_{i=t+1}^{t+s+1}(a_i^{'}L^{{b_i}^{'}})$ and
$ex_{h_2}(K_{L}^{n})=\sum\nolimits_{i=t+1}^{t+s+1}[(L-1)a_i^{'}b_i^{'}L^{b_i^{'}}+2I_{a_i^{'}}L^{b_i^{'}}]+2\sum\nolimits_{i=t+1}^{t+s}\sum\nolimits_{k=i+1}^{t+s+1}a_i^{'}a_k^{'}L^{b_k^{'}}$. Note that
\par ~$ex_h(K_{L}^{n})$
\par ~~~$=ex_{h_1+h_2}(K_{L}^{n})$
\par ~~~$=\sum\nolimits_{i=0}^t[(L-1)a_ib_iL^{b_i}+2I_{a_i}L^{b_i})+\sum\nolimits_{i=t+1}^{t+s+1}[(L-1)$
\par ~~~~~$\times a_i^{'}b_i^{'}L^{b_i^{'}}+2I_{a_i^{'}}L^{b_i^{'}}]+2\sum\nolimits_{i=0}^{t-1}\sum\nolimits_{k=i+1}^{t}a_ia_kL^{b_k}$
\par ~~~~~$+2\sum\nolimits_{i=t+1}^{t+s}\sum\nolimits_{k=i+1}^{t+s+1}a_i^{'}a_k^{'}L^{b_k^{'}}+2\sum\nolimits_{i=0}^{t}a_i$
\par ~~~~~$\times\sum\nolimits_{i=t+1}^{t+s+1}a_i^{'}L^{{b_i}^{'}}$
\par ~~~$=\sum\nolimits_{i=0}^t((L-1)a_ib_iL^{b_i}+2I_{a_i}L^{b_i})+2\sum\nolimits_{i=0}^{t-1}\sum\nolimits_{k=i+1}^{t}$
\par ~~~~~$(a_ia_kL^{b_k})+\sum\nolimits_{i=t+1}^{t+s+1}[(L-1)a_i^{'}b_i^{'}L^{b_i^{'}}+2I_{a_i^{'}}L^{b_i^{'}}]$
\par~~~~~$+2\sum\nolimits_{i=t+1}^{t+s}\sum\nolimits_{k=i+1}^{t+s+1}a_i^{'}a_k^{'}L^{b_k^{'}}+2\sum\nolimits_{i=0}^{t}a_i$
\par ~~~~~$\times\sum\nolimits_{i=t+1}^{t+s+1}a_i^{'}L^{{b_i}^{'}}$
\par ~~~$=ex_{h_1}(K_{L}^{n})+ex_{h_2}(K_{L}^{n})+2(\sum\nolimits_{i=0}^{t}a_i)h_2$.

Then, the proof is completed.
\end{proof}

\begin{lem}\label{Znz}
$\xi_{m+1}(K_L^n)-\xi_{m}(K_L^n)\geq 0$ for $1\leq m < L^{\lfloor{n\over2}\rfloor}$.
\end{lem}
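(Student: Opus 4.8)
The plan is to reduce the claimed monotonicity to a single increment bound on the densest-degree-sum function $ex_m(K_L^n)$. Earlier in the paper it is recorded that $\xi_m(K_L^n)=\xi_m^e(K_L^n)$ for the Hamming graph, and by the Corollary this common value equals $(L-1)nm-ex_m(K_L^n)$. Substituting this closed form gives
$$\xi_{m+1}(K_L^n)-\xi_m(K_L^n)=(L-1)n-\bigl(ex_{m+1}(K_L^n)-ex_m(K_L^n)\bigr),$$
so the whole statement is equivalent to the inequality $ex_{m+1}(K_L^n)-ex_m(K_L^n)\le (L-1)n$ for every $1\le m<L^{\lfloor n/2\rfloor}$.

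First I would give the increment $ex_{m+1}(K_L^n)-ex_m(K_L^n)$ a combinatorial meaning. Since the lexicographic order provides nested optimal subsets for $K_L^n$, the densest $m$-vertex induced subgraph is realised on $L_m^n$, the set of $L$-base $n$-strings of the integers $0,1,\dots,m-1$, and $L_{m+1}^n=L_m^n\cup\{v\}$, where $v$ is the string of $m$. As $ex_m(K_L^n)=2\,|E(K_L^n[L_m^n])|$, adjoining $v$ creates exactly as many new edges as $v$ has neighbours already lying in $L_m^n$; hence $ex_{m+1}(K_L^n)-ex_m(K_L^n)=2d$, where $d$ is the number of neighbours of $v$ among $\{0,\dots,m-1\}$.

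Next I would evaluate $d$ explicitly. Writing $m=\sum_{j=0}^{n-1}m_jL^{j}$ with $m_j\in\{0,\dots,L-1\}$, every neighbour of $v$ is obtained by altering exactly one coordinate: replacing the digit $m_j$ by a smaller digit $c<m_j$ produces the integer $m-(m_j-c)L^{j}\in\{0,\dots,m-1\}$, whereas replacing it by a larger digit produces an integer greater than $m$. Consequently $d$ is precisely the base-$L$ digit sum $\sum_{j=0}^{n-1}m_j$. The hypothesis $m<L^{\lfloor n/2\rfloor}$ forces $m_j=0$ for all $j\ge\lfloor n/2\rfloor$, so $d\le (L-1)\lfloor n/2\rfloor$ and therefore $ex_{m+1}(K_L^n)-ex_m(K_L^n)=2d\le 2(L-1)\lfloor n/2\rfloor\le (L-1)n$, which is exactly what the reduction demands.

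I expect the only delicate point to be the justification of the increment identity in the second step, which rests on two facts drawn from the earlier development: that the lexicographic (equivalently, numerical) order yields nested optimal subsets of $K_L^n$, and that these optimal subsets are the initial segments $L_m^n=S_m$. Granting these, the neighbour count is elementary and the bound $d\le(L-1)\lfloor n/2\rfloor$ is where the restriction $m<L^{\lfloor n/2\rfloor}$ is genuinely used. A purely computational alternative would expand the explicit formula for $ex_m(K_L^n)$ from the Corollary and track the base-$L$ carries as $m$ passes to $m+1$; this avoids appealing to nestedness but is appreciably more tedious, so I would keep it only as a fallback.
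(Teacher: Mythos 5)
Your proposal is correct and follows essentially the same route as the paper: the paper's proof likewise writes $\xi_{m+1}(K_L^n)-\xi_m(K_L^n)=(L-1)n-\bigl(ex_{m+1}(K_L^n)-ex_m(K_L^n)\bigr)$, uses the increment identity $ex_{m+1}(K_L^n)-ex_m(K_L^n)=2\sum_{i=0}^{s}a_i$ (twice the base-$L$ digit sum of $m$), and bounds the digit sum by $(L-1)\lfloor n/2\rfloor$ via $m<L^{\lfloor n/2\rfloor}$. The only difference is that the paper asserts the increment identity directly from the closed form of $ex_m(K_L^n)$, whereas you justify it by the neighbour count of the newly adjoined vertex in the nested lexicographic segments --- a harmless (indeed clarifying) elaboration of the same step.
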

\begin{proof}
Let $m=\sum_{i=0}^sa_iL^{b_i} < L^{\lfloor{n\over2}\rfloor}, 1\leq a_i\leq L$. So $m=\sum_{i=0}^sa_i< \lfloor{n\over2}\rfloor$. Then
\par $\xi_{m+1}(K_L^n)-\xi_{m}(K_L^n)$
\par ~~~$=\delta_L(K_L^n)n(m+1)-ex_{m+1}(K_L^n)-[\delta_L(K_L^n)nm$
\par ~~~~~$-ex_{m}(K_L^n)]$
\par ~~~$=(L-1)n(m+1)-ex_{m+1}(K_L^n)-[(L-1)nm$
\par ~~~~~$-ex_{m}(K_L^n)]$
\par ~~~$=(L-1)n-[ex_{m+1}(K_L^n)-ex_{m}(K_L^n)]$
\par ~~~$=(L-1)n-2\sum\nolimits_{i=0}^sa_i$
\par ~~~$\geq(L-1)n-2\times\lfloor{n\over2}\rfloor(L-1)$
\par ~~~$=(L-1)(n-2\times\lfloor{n\over2}\rfloor)\geq0$.

The proof is completed.
\end{proof}

\begin{lem}\label{z1} For any $0\leq g\leq L-2$, $0\leq t\leq n-2$, then $\xi_{(g+1)L^t}(K_L^n)-\xi_{gL^t}(K_L^n)\geq 0$.
\end{lem}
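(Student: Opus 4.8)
The plan is to reduce the statement to the closed form $\xi_m(K_L^n)=(L-1)nm-ex_m(K_L^n)$ together with the explicit expression for $ex_m(K_L^n)$ recorded earlier, and then to evaluate both terms directly, exploiting the fact that the two indices $gL^t$ and $(g+1)L^t$ have especially simple $L$-base decompositions. First I would dispose of the degenerate case $g=0$: here $\xi_{gL^t}(K_L^n)=\xi_0(K_L^n)=0$, so the difference equals $\xi_{L^t}(K_L^n)=(L-1)(n-t)L^t\geq 0$, which is immediate. I would also check once and for all that both indices lie in the admissible range where the formula is valid, namely $(g+1)L^t\leq(L-1)L^t<L^{t+1}\leq L^{n-1}\leq\lfloor L^n/2\rfloor$ for $L\geq 2$ and $t\leq n-2$.

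For $1\leq g\leq L-2$ the key observation is that $gL^t$ and $(g+1)L^t$ are single-term decompositions: one digit $a_0=g$ (respectively $a_0=g+1$) sits at position $b_0=t$ with no lower-order digits, so the double-sum cross terms in the formula for $ex_m(K_L^n)$ vanish entirely. Substituting $a_0=g$, $b_0=t$ and using $(a_0-1)a_0=(g-1)g$ gives $ex_{gL^t}(K_L^n)=(L-1)gtL^t+(g-1)gL^t$, and analogously for $g+1$. Subtracting, the difference of the two $ex$-values collapses to $(L-1)tL^t+2gL^t$, and hence $\xi_{(g+1)L^t}(K_L^n)-\xi_{gL^t}(K_L^n)=(L-1)nL^t-(L-1)tL^t-2gL^t=L^t[(L-1)(n-t)-2g]$.

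It then remains only to show $(L-1)(n-t)\geq 2g$. Using the hypotheses $t\leq n-2$ (so $n-t\geq 2$) and $g\leq L-2$, I would bound $(L-1)(n-t)\geq 2(L-1)=2L-2$ while $2g\leq 2L-4$, so the bracketed quantity is at least $2$ and the whole difference is at least $2L^t>0$. This closes the nondegenerate case, and together with the $g=0$ computation establishes the inequality.

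As for obstacles, there is essentially no hard step here: once the closed form is in hand the argument is a short direct computation. The only points demanding care are bookkeeping — confirming that the cross terms really drop out for a single-digit index, keeping the algebra of the $I$-contribution $(a_0-1)a_0$ straight through the subtraction, and verifying the range condition $m\leq\lfloor L^n/2\rfloor$ so that the expression for $\xi_m(K_L^n)$ applies. Notably, one cannot simply telescope via Lemma~\ref{Znz}, since its monotonicity is only guaranteed for $m<L^{\lfloor n/2\rfloor}$, whereas $gL^t$ may exceed this bound when $t$ is large; this is exactly why the direct evaluation is the right route.
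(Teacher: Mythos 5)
Your proposal is correct and follows essentially the same route as the paper: both evaluate $\xi_{(g+1)L^t}(K_L^n)-\xi_{gL^t}(K_L^n)$ directly from the closed form as $(L-1)(n-t)L^t-2gL^t$ and conclude positivity from $g\leq L-2$ and $n-t\geq 2$. Your write-up merely makes explicit what the paper asserts in one line (the single-term $ex$-computation, the $g=0$ case, and the range check), which is sound but not a different method.
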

\begin{proof}
$\xi_{(g+1)L^t}(K_L^n)-\xi_{gL^t}(K_L^n)=(L-1)nL^t-(L-1)tL^t-2gL^t=(L-1)(n-t)L^t-2gL^t\textgreater(L-1)(n-t-2)L^t$. The last inequality holds because of $g<L-1$ and $n-t\geq2$. So the result holds.
\end{proof}

\begin{lem}\label{Zng}For any $0\leq g\leq L-1$, $0\leq t\leq n-2$, $h_0\textless L^t$, then $\xi_{gL^t+h_0}(K_L^n)-\xi_{gL^t}(K_L^n)\geq 0$.
\end{lem}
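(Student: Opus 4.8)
The plan is to reduce the claimed inequality to a statement about the single value $\xi_{h_0}(K_L^n)$ by exploiting the additive structure of $ex_m(K_L^n)$. Note first that the monotonicity already established in Lemma \ref{Znz} cannot be invoked directly here: that result is restricted to $m<L^{\lfloor n/2\rfloor}$, whereas the interval $[gL^t,\,gL^t+h_0]$ may lie entirely above $L^{\lfloor n/2\rfloor}$ once $t$ is close to $n-2$. Hence I would work throughout with the closed form $\xi_m(K_L^n)=(L-1)nm-ex_m(K_L^n)$, which is legitimate here since $gL^t+h_0<L^{t+1}\le L^{n-1}\le\lfloor L^n/2\rfloor$ for $L\ge 2$.

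First I would split $gL^t+h_0$ as $h_1+h_2$ with $h_1=gL^t$ and $h_2=h_0$. Because $h_0<L^t$, every exponent in the $L$-base expansion of $h_0$ is strictly smaller than $t$, so the ordering hypothesis of Lemma \ref{fenjie} is met (the single exponent $t$ of $h_1$ dominates all exponents of $h_2$). The decomposition identity then gives $ex_{gL^t+h_0}(K_L^n)=ex_{gL^t}(K_L^n)+ex_{h_0}(K_L^n)+2g\,h_0$, the coefficient $2g$ arising from $\sum_i a_i=g$ for the one-term expansion of $gL^t$. (Strictly, Lemma \ref{fenjie} is stated for $h\le L^{\lfloor n/2\rfloor}$, but its proof only rearranges the closed-form expression for $ex_m(K_L^n)$, valid for all $m\le L^n$, so the identity persists in our range.) Substituting into $\xi_m=(L-1)nm-ex_m$ and cancelling the $gL^t$ contributions yields the clean reduction $\xi_{gL^t+h_0}(K_L^n)-\xi_{gL^t}(K_L^n)=\xi_{h_0}(K_L^n)-2g\,h_0$, so it remains to prove $\xi_{h_0}(K_L^n)\ge 2g\,h_0$.

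The crux, and the step I expect to be the main obstacle, is a sharp enough upper bound on $ex_{h_0}(K_L^n)$. Here I would use the explicit nested construction of the optimal $h_0$-set from Section III: since $h_0<L^t$, its leading exponent satisfies $b_0\le t-1$, and the construction places all of its vertices inside a single $L$-ary $(b_0+1)$-dimensional sublayer, hence inside an $L$-ary $t$-dimensional sublayer $K_L^t$. In $K_L^t$ every vertex has degree exactly $(L-1)t$, so the induced degree sum satisfies $ex_{h_0}(K_L^n)\le (L-1)t\,h_0$. Consequently $\xi_{h_0}(K_L^n)=(L-1)nh_0-ex_{h_0}(K_L^n)\ge (L-1)(n-t)h_0$.

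Finally I would close the argument using the two hypotheses. Since $t\le n-2$ we have $n-t\ge 2$, whence $(L-1)(n-t)\ge 2(L-1)$; and since $g\le L-1$ we have $2(L-1)\ge 2g$. Chaining these gives $\xi_{h_0}(K_L^n)\ge (L-1)(n-t)h_0\ge 2(L-1)h_0\ge 2g\,h_0$, which is precisely what the reduction requires (the cases $g=0$ or $h_0=0$ being trivial). I expect the only delicate point to be the justification that the densest $h_0$-vertex subgraph can be confined to a $t$-dimensional sublayer; once that is granted, everything else is bookkeeping with the closed forms.
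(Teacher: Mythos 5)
Your proposal is correct and takes essentially the same route as the paper: the paper's one-line proof is exactly your reduction, computing $\xi_{gL^t+h_0}(K_L^n)-\xi_{gL^t}(K_L^n)=(L-1)nh_0-2gh_0-ex_{h_0}(K_L^n)$ via the additive decomposition of $ex$ and then closing with $g\le L-1$ and $t\le n-2$. The only cosmetic difference is the last step, where the paper bounds $2g\le 2(L-1)$ and recognizes the result as $(L-1)(n-2)h_0-ex_{h_0}(K_L^{n-2})=\xi_{h_0}(K_L^{n-2})\ge 0$ (using that $ex_{h_0}$ is independent of the ambient dimension), whereas you bound $ex_{h_0}(K_L^n)\le (L-1)t\,h_0$ via sublayer containment --- equivalent in substance, and your write-up is in fact more explicit about the range caveats (validity of the closed form up to $\lfloor L^n/2\rfloor$ and of Lemma~\ref{fenjie} beyond $L^{\lfloor n/2\rfloor}$) that the paper passes over silently.
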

\begin{proof}
$\xi_{gL^t+h_0}(K_L^n)-\xi_{gL^t}(K_L^n)=(L-1)nh_0-2gh_0-ex_{h_0}(K_L^n)\geq(L-1)(n-2)h_0-ex_{h_0}(K_L^{n-2})=\xi_{h_0}(K_L^{n-2})\geq0.$
\end{proof}

\begin{lem}\label{Znt}For any positive integers $0\leq t\leq n-2$, then $\xi_{L^{t+1}}(K_L^n)-\xi_{L^{t}}(K_L^n)\geq 0$.
\end{lem}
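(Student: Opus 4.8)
The plan is to evaluate both terms in closed form and then reduce the inequality to something elementary. The key observation is that both $L^{t}$ and $L^{t+1}$ have the simplest possible $L$-base decomposition: a single summand with $s=0$, $a_0=1$, and $b_0=t$ (respectively $b_0=t+1$). First I would substitute $m=L^{t}$ into the formula $ex_m(K_L^n)=\sum_{i=0}^{s}[(L-1)a_ib_iL^{b_i}+(a_i-1)a_iL^{b_i}]+2\sum_{i=0}^{s-1}\sum_{k=i+1}^{s}a_ia_kL^{b_k}$ from the Corollary. The within-layer term $(a_0-1)a_0L^{b_0}$ vanishes because $a_0-1=0$, and the cross-term double sum is empty because $s=0$; hence $ex_{L^{t}}(K_L^n)=(L-1)tL^{t}$. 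Using $\xi_m(K_L^n)=(L-1)nm-ex_m(K_L^n)$ this yields the closed form
\[
\xi_{L^{t}}(K_L^n)=(L-1)nL^{t}-(L-1)tL^{t}=(L-1)(n-t)L^{t},
\]
which is precisely the common value advertised in the abstract. The identical substitution with $b_0=t+1$ gives $\xi_{L^{t+1}}(K_L^n)=(L-1)(n-t-1)L^{t+1}$.

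Next I would form the difference and factor out the positive quantity $(L-1)L^{t}$:
\[
\xi_{L^{t+1}}(K_L^n)-\xi_{L^{t}}(K_L^n)=(L-1)L^{t}\bigl[(n-t-1)L-(n-t)\bigr]=(L-1)L^{t}\bigl[(n-t)(L-1)-L\bigr].
\]
Since $(L-1)L^{t}>0$, the sign of the whole expression is controlled entirely by the bracket. Finally, I would bound the bracket below using the hypothesis $t\leq n-2$, which gives $n-t\geq 2$, so that
\[
(n-t)(L-1)-L\geq 2(L-1)-L=L-2\geq 0
\]
because $L\geq 2$. This shows the difference is nonnegative and finishes the argument.

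There is essentially no obstacle here, since the entire content is the explicit evaluation of $\xi$ at a pure power of $L$, after which the inequality collapses to the trivial bound $L\geq 2$. The only point that deserves explicit mention rather than silent use is the vanishing of the within-layer term $(a_0-1)a_0L^{b_0}$ and the emptiness of the cross-term double sum for a single-digit decomposition; stating these explicitly is what justifies the clean closed form $(L-1)(n-t)L^{t}$ instead of merely asserting it, and the rest of the proof is routine arithmetic.
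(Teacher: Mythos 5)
Your proposal is correct and follows essentially the same route as the paper: both evaluate $ex$ at the single-digit decompositions $L^{t}$ and $L^{t+1}$ to get $\xi_{L^{t}}(K_L^n)=(L-1)(n-t)L^{t}$, factor the difference as $(L-1)\bigl[(n-t)(L-1)-L\bigr]L^{t}$, and bound the bracket via $n-t\geq 2$ by $2(L-1)-L=L-2\geq 0$. Your explicit remark on why the within-layer and cross terms vanish is a small clarity improvement over the paper's direct substitution (which even contains a harmless sign typo in one intermediate line), but the argument is the same.
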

\begin{proof}
\par ~$\xi_{L^{t+1}}(K_L^n)-\xi_{L^{t}}(K_L^n)$
\par ~~~$=(L-1)n(L-1)L^t-ex_{L_{t+1}}(K_L^n)+ex_{L^t}(K_L^n)$
\par ~~~$=(L-1)^2nL^t-[(L-1)(t+1)L^{t+1}-(L-1)tL^t]$
\par ~~~$=(L-1)L^t(nL-n-tL-L-t)$
\par ~~~$=(L-1)[(n-t)(L-1)-L]L^t$
\par ~~~$\geq(L-1)[2(L-1)-L]L^t$
\par ~~~$=(L-1)(L-2)L^t$
\par ~~~$\geq 0.$
\end{proof}

\begin{lem}\label{Zn1}
For some integers $L$, $m$, $n$, $g$ and $t$, $n\geq 2$, $L\geq 2$, $1\leq g\leq L-1$ and $gL^t\leq  m\leq {L^{n-1}}$, then $\xi_m(K_L^n)\geq\xi_{gL^t}(K_L^n)$.
\end{lem}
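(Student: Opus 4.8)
The plan is to reduce an arbitrary $m$ in the range $gL^{t}\le m\le L^{n-1}$ to a single base-$L$ ``digit'' $a_0L^{b_0}$, and then to compare digits living at different levels. I will work throughout with the closed form $\xi_{gL^{t}}(K_L^n)=gL^{t}[(L-1)(n-t)-(g-1)]$, which is immediate from the formula for $ex_m(K_L^n)$ in the Corollary, and with the monotonicity building blocks of Lemmas \ref{z1}, \ref{Zng} and \ref{Znt}. Write $m=\sum_{i=0}^{s}a_iL^{b_i}$ with $b_0>b_1>\cdots>b_s\ge 0$ and $1\le a_i\le L-1$. Since $m\ge gL^{t}\ge L^{t}$ and $m\le L^{n-1}$, I first record that $t\le b_0\le n-1$, and that the case $m=gL^t$ is trivial.

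First I would dispose of the multi-digit case $s\ge 1$. Here $m\ge L^{b_0}+L^{b_1}$ already forces $b_0\le n-2$, so writing $m=a_0L^{b_0}+m''$ with $0<m''=\sum_{i\ge 1}a_iL^{b_i}<L^{b_0}$, Lemma \ref{Zng}, applied with its parameter ``$g$'' taken to be $a_0$, its ``$t$'' taken to be $b_0\le n-2$, and $h_0=m''$, yields $\xi_m(K_L^n)\ge \xi_{a_0L^{b_0}}(K_L^n)$. A short check gives $a_0L^{b_0}\ge gL^{t}$: one has $b_0\ge t$, and if $b_0=t$ then $m''<L^{t}$ forces $a_0\ge g$, while if $b_0>t$ then $a_0L^{b_0}\ge L^{t+1}>gL^{t}$. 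Thus the whole problem is reduced to the single-digit case $m=a_0L^{b_0}$ with $a_0L^{b_0}\ge gL^{t}$, which I treat next and which again satisfies $b_0\ge t$.

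For the single-digit case, if $b_0=t$ then $a_0\ge g$, and iterating Lemma \ref{z1} along $g,g+1,\dots,a_0$ gives $\xi_{a_0L^{t}}\ge\xi_{gL^{t}}$. If $b_0\ge t+1$, I would chain three inequalities: $\xi_{a_0L^{b_0}}\ge\xi_{L^{b_0}}$ by Lemma \ref{z1} (lowering the coefficient from $a_0$ down to $1$; this is legitimate because $a_0\ge 2$ forces $b_0\le n-2$, and when $a_0=1$ the step is an identity), then $\xi_{L^{b_0}}\ge\xi_{L^{t+1}}$ by iterating Lemma \ref{Znt} from level $t+1$ up to $b_0$ (each step needs its level $\le n-2$, which holds since $b_0\le n-1$), and finally $\xi_{L^{t+1}}\ge\xi_{gL^{t}}$.

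The last inequality is the one genuinely new point and I expect it to be the main obstacle, since it is not implied by the three monotonicity lemmas: Lemmas \ref{z1} and \ref{Znt} only certify that both $\xi_{L^{t+1}}$ and $\xi_{gL^{t}}$ exceed $\xi_{L^{t}}$, which does not order them against each other. I would settle it by the explicit computation $\xi_{L^{t+1}}-\xi_{gL^{t}}=L^{t}\{(L-1)(n-t)(L-g)-(L-1)L+g(g-1)\}$; regarding the brace as a function of $g$ on $\{1,\dots,L-1\}$, its discrete derivative is negative because $n-t\ge 2$ in this case (as $t+1\le b_0\le n-1$), so the minimum is at $g=L-1$, where the brace equals $(L-1)(n-t-2)\ge 0$. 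Alternatively, one may first deduce the scaling identity $\xi_{jL^{t}}(K_L^n)=L^{t}\,\xi_{j}(K_L^{n-t})$ directly from the $ex_m$ formula, which collapses this step to the one-lower-dimensional comparison $\xi_{L}(K_L^{n-t})\ge\xi_{g}(K_L^{n-t})$. Either way, combining the three displayed inequalities gives $\xi_{a_0L^{b_0}}\ge\xi_{gL^{t}}$, which together with the reduction of the multi-digit case completes the proof.
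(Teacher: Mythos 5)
Your proposal is correct, and it uses the same toolkit as the paper --- Lemma \ref{Zng} to strip the tail of the base-$L$ expansion, Lemma \ref{z1} to move the leading coefficient, Lemma \ref{Znt} to climb between levels --- but you organize the argument around the digit decomposition of $m$ (multi-digit versus single-digit, $b_0=t$ versus $b_0\ge t+1$) rather than the paper's case split on $t\in\{n-1,\,n-2,\,<n-2\}$, and, more substantively, you prove a step the paper only tacitly uses. The paper's Subcase 3.1 delivers $\xi_m(K_L^n)\ge\xi_{L^{t+1}}(K_L^n)$ for $L^{t+1}\le m\le L^{n-1}$, while Subcase 3.2 delivers $\xi_m(K_L^n)\ge\xi_{gL^t}(K_L^n)$ only for $gL^t\le m<L^{t+1}$; combining them (and likewise handling the endpoint $m=L^{n-1}$ in its Case 2) requires exactly the crossover inequality $\xi_{L^{t+1}}(K_L^n)\ge\xi_{gL^t}(K_L^n)$, which, as you correctly observe, none of Lemmas \ref{z1}, \ref{Zng}, \ref{Znt} supplies, since they only bound both quantities below by $\xi_{L^t}(K_L^n)$. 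Your explicit computation closing this gap checks out: the brace $(L-1)(n-t)(L-g)-(L-1)L+g(g-1)$ has discrete derivative $2g-(L-1)(n-t)<0$ for $g\le L-2$ and $n-t\ge2$, and at $g=L-1$ it equals $(L-1)(n-t-2)\ge0$ (indeed at $n-t=2$ it is $(L-g)(L-g-1)$, so equality genuinely occurs at $g=L-1$, $t=n-2$). Your alternative via the scaling identity $\xi_{jL^t}(K_L^n)=L^t\,\xi_j(K_L^{n-t})$, which reduces the crossover to the monotonicity of Lemma \ref{Znz} in dimension $n-t\ge2$, is also valid and arguably cleaner. So your write-up is, if anything, more complete than the paper's own proof, whose ``Combined Subcase 3.1 and Subcase 3.2'' sentence papers over this comparison. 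One cosmetic remark: in your single-digit branch $b_0=t$, iterating Lemma \ref{z1} requires $t\le n-2$; this is automatic by the same observation you make in the other branch (if $a_0>g$ then $a_0\ge2$, and $a_0L^t\le L^{n-1}$ forces $t\le n-2$, while if $a_0=g$ the chain is empty), but it deserves a clause.
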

\begin{proof}
\textbf {Case 1:}
If $g=1$, $t=n-1$, then $\xi_{L^{n-1}}(K_L^n)\geq \xi_m(K_L^n)=\xi_{L^{n-1}}(K_L^n)$.

\textbf {Case 2:} For $1\leq g\leq g_0\leq L-1$, $t=n-2$, by \textbf{Lemma} \ref{z1}, we have $\xi_{(g_0+1)L^{n-2}}(K_L^n)\geq\xi_{g_0L^{n-2}}(K_L^n)$ for $g_0\leq L-2$. And by \textbf{Lemma} \ref{Zng}, $\xi_{g_0L^{n-2}+h_0}(K_L^n)\geq\xi_{g_0L^{n-2}}(K_L^n)$ for $g_0\leq L-1$ is be obtained. So we can have $\xi_m(K_L^n)\geq\xi_{gL^t}(K_L^n)$ for any $gL^t\leq m\leq L^{n-1}$.

\textbf {Case 3:} For $0\leq t\textless n-2$, we divide the $gL^t\leq m\leq L^{n-1}$ into two parts $m\textless L^{t+1}$ and $L^{t+1}\leq m\leq L^{n-1}$.

Subcase 3.1: By \textbf{Lemma} \ref{Znt}, we can obtain that $\xi_{L^{t_0+1}}(K_L^n)\geq\xi_{L^{t_0}}(K_L^n)$ for any integer $t+1\leq t_0\leq n-2$. And by \textbf{Lemma} \ref{z1} and \ref{Zng}, we can obtain that $\xi_m(K_L^n)\geq\xi_{L^{t+1}}(K_L^n)$ for any $L^{t+1}\leq m\leq L^{n-1}$.

Subcase 3.2: For any $gL^t\leq m\textless L^{t+1}$, $g\leq g_0\leq L-1$, $\xi_{(g_0+1)L^t}(K_L^n)\geq\xi_{g_0L^t}(K_L^n)$ is be obtained by \textbf{Lemma} \ref{z1} and $\xi_{g_0L^t+h_0}(K_L^n)\geq\xi_{g_0L^t}(K_L^n)$ for $h_0\textless L^t$ is be obtained by \textbf{Lemma} \ref{Zng}. So one can have $\xi_m(K_L^n)\geq\xi_{gL^t}(K_L^n)$ for any $gL^t\leq m\textless L^{t+1}$.

Combined Subcase 3.1 and Subcase 3.2, we can deduce that $\xi_m(K_L^n)\geq\xi_{gL^t}(K_L^n)$ for any $gL^t\leq m\leq L^{n-1}$.

In summary, $\xi_m(K_L^n)\geq\xi_{gL^t}(K_L^n)$ for $n\geq 2$, $L\geq 2$, $1\leq g\leq L-1$, $gL^t\leq  m\leq {L^{n-1}}$. The proof is done.
\end{proof}

\begin{lem}\label{Znd1} For $L^{n-1}\leq m\leq \lfloor L^n/2\rfloor$, we have $\xi_m(K_L^n)\geq\xi_{L^{n-1}}(K_L^n)$.
\end{lem}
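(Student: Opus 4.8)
The plan is to reduce the claim to one exact identity obtained from a top-digit decomposition of $\xi_m(K_L^n)$, and then to use the hypothesis $m\le\lfloor L^n/2\rfloor$ to control the signs of the resulting terms. First I would write $m=gL^{n-1}+h_0$ with $0\le h_0<L^{n-1}$. Since $gL^{n-1}\le m\le\lfloor L^n/2\rfloor$, this automatically forces $1\le g\le\lfloor L/2\rfloor\le L-1$, so the leading coordinate carries a single admissible digit $g$. This bound on $g$ is the structural feature that makes the top layer $t=n-1$ manageable, and it is exactly what is unavailable in Lemmas \ref{z1} and \ref{Zng}, which require $t\le n-2$; so the proof must use the upper restriction on $m$ in an essential way.

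Next I would record $\xi_{gL^{n-1}}(K_L^n)=g(L-g)L^{n-1}$, which follows at once from $\xi_{gL^{n-1}}=(L-1)ngL^{n-1}-ex_{gL^{n-1}}(K_L^n)$ together with $ex_{gL^{n-1}}(K_L^n)=(L-1)g(n-1)L^{n-1}+g(g-1)L^{n-1}$; in particular $\xi_{L^{n-1}}(K_L^n)=(L-1)L^{n-1}$. Then I would apply the same purely algebraic splitting as in Lemma \ref{fenjie}, with $h_1=gL^{n-1}$ (digit sum $g$) and $h_2=h_0$, to obtain $ex_m(K_L^n)=ex_{gL^{n-1}}(K_L^n)+ex_{h_0}(K_L^n)+2g h_0$. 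This identity is formal in the base-$L$ digits, so it remains valid on our whole range even though $m$ may exceed $L^{\lfloor n/2\rfloor}$. Since $h_0<L^{n-1}$ has top exponent at most $n-2$, the quantity $ex_{h_0}$ does not depend on the ambient dimension, i.e.\ $ex_{h_0}(K_L^n)=ex_{h_0}(K_L^{n-1})$.

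Combining these ingredients yields the clean decomposition
\[\xi_m(K_L^n)-\xi_{L^{n-1}}(K_L^n)=(g-1)(L-1-g)L^{n-1}+\big[(L-1)(n-1)h_0-ex_{h_0}(K_L^{n-1})\big]+(L-1-2g)h_0,\]
and it then remains to check that each of the three summands is nonnegative. The first is nonnegative because $1\le g\le L-1$ makes both $(g-1)$ and $(L-1-g)$ nonnegative; the bracketed second term is the boundary size $|[X,\overline X]|$ of the lexicographic-first $h_0$ vertices of $K_L^{n-1}$, hence a nonnegative edge count. The main (and only delicate) point is the third term, whose coefficient $L-1-2g$ may be negative. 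Here I would invoke the range bound once more: $L-1-2g<0$ forces $g\ge L/2$, which together with $g\le\lfloor L/2\rfloor$ can happen only when $L$ is even and $g=L/2$; but then $m\le\lfloor L^n/2\rfloor=(L/2)L^{n-1}$ forces $h_0=0$, so the term vanishes. Thus $(L-1-2g)h_0\ge0$ throughout, all three summands are nonnegative, and the inequality follows (with $L=2$ being the degenerate single point $m=L^{n-1}$). The step to guard carefully is precisely this sign analysis of the cross term, since it is where the hypothesis $m\le\lfloor L^n/2\rfloor$ is indispensable and where the behaviour genuinely differs from Lemma \ref{Zn1}.
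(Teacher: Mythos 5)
Your proof is correct, and it is organized genuinely differently from the paper's. The paper argues by telescoping: it first establishes the two local monotonicity steps $\xi_{(g+1)L^{n-1}}(K_L^n)\ge\xi_{gL^{n-1}}(K_L^n)$ (its Case 1) and $\xi_{gL^{n-1}+h_0}(K_L^n)\ge\xi_{gL^{n-1}}(K_L^n)$ (its Case 2), both only for $g<\lfloor L/2\rfloor$, which settles the range $L^{n-1}\le m\le\lfloor L/2\rfloor L^{n-1}$, and then, for odd $L$, runs a second and finer chain (Case 3, Subcases 3.1--3.2) along the partial sums of the expansion $\lfloor L^n/2\rfloor=\sum_{i=0}^{n-1}\lfloor L/2\rfloor L^{n-1-i}$. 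You instead compare $\xi_m$ with $\xi_{L^{n-1}}$ in a single closed form via the digit-splitting identity of Lemma \ref{fenjie} applied with $h_1=gL^{n-1}$, $h_2=h_0$; your remark that this identity is purely formal in the base-$L$ digits is right (its proof never uses the bound $h\le L^{\lfloor n/2\rfloor}$), so invoking it on the full range is legitimate, and your three-term decomposition checks out algebraically, e.g.\ $g(L-g)L^{n-1}-(L-1)L^{n-1}=(g-1)(L-1-g)L^{n-1}$. The payoff is that your sign analysis of $(L-1-2g)h_0$ replaces the paper's entire Case 3: for odd $L$ the constraint $g\le(L-1)/2$ gives $L-1-2g\ge 0$ outright with no restriction on $h_0$ (so you in fact prove the inequality on the larger range $m<\frac{L+1}{2}L^{n-1}$), while for even $L$ the only dangerous configuration $g=L/2$ forces $h_0=0$, which is exactly where the hypothesis $m\le\lfloor L^n/2\rfloor$ is consumed. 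What the paper's incremental route buys is the intermediate monotonicity statements themselves, which Lemma \ref{gld} later cites verbatim (``by the proof of Case 1 and Case 2 / Subcase 3.1 and 3.2 in Lemma \ref{Znd1}''); your identity recovers them as well, since $\xi_m-\xi_{gL^{n-1}}$ is precisely your second plus third summands, but that should be said explicitly if your proof is to substitute for the paper's in those later citations. A minor point in your favor: you justify the nonnegativity of the bracketed term as the edge-boundary count $|[X,\overline{X}]|$ of the lexicographic-first $h_0$ vertices of $K_L^{n-1}$, which is cleaner than the paper's writing $\xi_{h_0}(K_L^{n-1})$ there, since $h_0$ may exceed $\lfloor L^{n-1}/2\rfloor$, where $\xi$ is, strictly speaking, not defined.
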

\begin{proof}
Since the function $\xi_m(K_L^n)$ is highly relied on the decomposition of the integer $m$ and on the parity of $L$, we give the decomposition of the integer $\lfloor L^n/2\rfloor$ as follows:
\begin{equation}
\lfloor L^n/2\rfloor=\left\{
\begin{aligned}
&{L\over2}L^{n-1} &\text{for even}\;L; \\
&\sum\nolimits_{i=0}^{n-1}\lfloor{L\over2}\rfloor L^{n-1-i} &\text{for odd}\;L.\\
\end{aligned}
\right.
\end{equation}
In order to obtain our result, three cases of decomposition of $m$ are discussed (if $L$ is even, there are only {\bf Case 1} and {\bf Case 2}).

{\bf Case 1:} For any $1\leq g< \lfloor{L\over2}\rfloor$, one can obtain that
\par ~$\xi_{(g+1)L^{n-1}}(K_L^n)-\xi_{gL^{n-1}}(K_L^n)$
\par ~~~$=(L-1)nL^{n-1}-(L-1)(n-1)L^{n-1}-2gL^{n-1}$
\par ~~~$\geq(L-1)L^{n-1}-2(\lfloor{L\over2}\rfloor-1)L^{n-1}$
\par ~~~$=(\lceil{L\over2}\rceil-\lfloor{L\over2}\rfloor+1)L^{n-1}$
\par ~~~$\geq L^{n-1}$
\par ~~~$>0.$

{\bf Case 2:} For any $1\leq g<\lfloor{L\over2}\rfloor$, $h_0\textless L^{n-1}$, we have
\par ~$\xi_{gL^{n-1}+h_0}(K_L^n)-\xi_{gL^{n-1}}(K_L^n)$
\par ~~~$=(L-1)nh_0-2gh_0-ex_{h_0}(K_L^n)$
\par ~~~$\geq(L-1)nh_0-2(\lfloor{L\over2}\rfloor-1)h_0-ex_{h_0}(K_L^n)$
\par ~~~$=(L-1)n-(L-1)h_0+(\lceil{L\over2}\rceil-\lfloor{L\over2}\rfloor+1)h_0$
\par ~~~~~$-ex_{h_0}(K_L^n)$
\par ~~~$=(L-1)(n-1)h_0-ex_{h_0}(K_L^n)+(\lceil{L\over2}\rceil-\lfloor{L\over2}\rfloor+1)h_0$
\par ~~~$=\xi_{h_0}(K_L^{n-1})+(\lceil{L\over2}\rceil-\lfloor{L\over2}\rfloor+1)h_0$
\par ~~~$\geq 0.$

Actually, by the {\bf Case 1} and {\bf Case 2}, one can already prove that for $L^{n-1}\leq m\leq \lfloor{L\over2}\rfloor L^{n-1}$, $\xi_m(K_L^n)\geq\xi_{L^{n-1}}(K_L^n)$.
But if $L$ is odd, for $\lfloor{L\over2}\rfloor L^{n-1}\leq m\leq \sum\nolimits_{i=0}^{n-1}\lfloor{L\over2}\rfloor L^{n-1-i}$, two subcases are still needed to discuss.
\par {\bf Case 3:}

Subcase 3.1: For odd $L$, $h_0<{1\over2}L^{n-2-j}$, then

\par ~$\xi_{\sum\nolimits_{i=0}^j\lfloor{L\over2}\rfloor L^{n-1-j}+h_0}(K_L^n)-\xi_{\sum\nolimits_{i=0}^j\lfloor{L\over2}\rfloor L^{n-1-j}}(K_L^n)$
\par ~~~$=(L-1)nh_0-2\sum\nolimits_{i=0}^j\lfloor{L\over2}\rfloor h_0-ex_{h_0}(K_L^n)$
\par ~~~$=(L-1)nh_0-2(\lfloor{L\over2}\rfloor-1)(j+1)h_0-ex_{h_0}(K_L^n)$
\par ~~~$=(L-1)(n-j-1)h_0-ex_{h_0}(K_L^n)$
\par ~~~$=(L-1)(n-j-1)h_0-ex_{h_0}(K_L^{n-j-1})$
\par ~~~$=\xi_{h_0}(K_L^{n-j-1})$
\par ~~~$\geq 0.$

Subcase 3.2:
\par ~$\xi_{\sum\nolimits_{i=0}^{j+1}\lfloor{L\over2}\rfloor L^{n-1-j}}(K_L^n)-\xi_{\sum\nolimits_{i=0}^j\lfloor{L\over2}\rfloor L^{n-1-j}}(K_L^n)$
\par ~~~$=(L-1)n{L\over2}L^{n-1-j-1}-2\sum\nolimits_{i=0}^j\lfloor{L\over2}\rfloor L^{n-2-j}$
\par ~~~~~$-ex_{{L\over2}L^{n-2-j}}(K_L^n)$
\par ~~~$=(L-1)n\lfloor{L\over2}\rfloor L^{n-2-j}-(L-1)(j+1){L\over2}L^{n-2-j}$
\par ~~~~~$-ex_{{L\over2}L^{n-2-j}}(K_L^n)$
\par ~~~$=(L-1)(n-j-1){L\over2}L^{n-2-j}-ex_{{L\over2} L^{n-2-j}}(K_L^{n-1-j})$
\par ~~~$=\xi_{{L\over2}L^{n-2-j}}(K_L^{n-j-1})$
\par ~~~$\geq 0.$

If $L$ is odd, for $\lfloor{L\over2}\rfloor L^{n-1}\leq m\leq \sum\nolimits_{i=0}^{n-1}\lfloor{L\over2}\rfloor L^{n-1-i}$, based on above two subcases, one can obtain that $\xi_{m}(K_L^{n-1})\geq\xi_{\lfloor{L\over2}\rfloor}L^{n-1}(K_L^{n-1})\geq\xi_{L^{n-1}}(K_L^n)$.
All in all, the proof is done.
\end{proof}

\begin{lem}\label{gld}
For some nonnegative integer $t\leq n-1$, $1\leq g\leq L-1$, $gL^t\leq m\leq \lfloor{L^n/2}\rfloor$, then $\xi_m(K_L^n)\geq\xi_{gL^t}(K_L^n)$.
\end{lem}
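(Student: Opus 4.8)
The plan is to reduce the entire range $gL^t\le m\le\lfloor L^n/2\rfloor$ to the two regimes already settled in Lemma \ref{Zn1} and Lemma \ref{Znd1}, using the threshold $L^{n-1}$ as the natural splitting point. First I would dispose of the lower regime $gL^t\le m\le L^{n-1}$: here the conclusion $\xi_m(K_L^n)\ge\xi_{gL^t}(K_L^n)$ is exactly Lemma \ref{Zn1}, so nothing new is required.

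Next I would treat the upper regime $L^{n-1}\le m\le\lfloor L^n/2\rfloor$. By Lemma \ref{Znd1} we already know $\xi_m(K_L^n)\ge\xi_{L^{n-1}}(K_L^n)$, so it suffices to compare the milestone value $\xi_{L^{n-1}}(K_L^n)$ with $\xi_{gL^t}(K_L^n)$. When $t\le n-2$ we have $gL^t\le(L-1)L^{n-2}<L^{n-1}$, so Lemma \ref{Zn1} (applied with $m=L^{n-1}$) gives $\xi_{L^{n-1}}(K_L^n)\ge\xi_{gL^t}(K_L^n)$, and chaining the two inequalities finishes this subcase. The same chaining works verbatim when $t=n-1$ and $g=1$, since then $gL^t=L^{n-1}$ and the comparison is an equality.

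The remaining, and genuinely new, subcase is $t=n-1$ with $g\ge2$. Here the constraint $gL^t\le\lfloor L^n/2\rfloor$ forces $2\le g\le\lfloor L/2\rfloor$, and both $gL^{n-1}$ and $m$ lie in $[L^{n-1},\lfloor L^n/2\rfloor]$, so neither Lemma \ref{Zn1} (which needs $m\le L^{n-1}$) nor the black-box statement of Lemma \ref{Znd1} (which only anchors at $L^{n-1}$) applies directly. My approach is to re-use the step-wise increments established inside the proof of Lemma \ref{Znd1}: its Case 1 shows $\xi_{(g'+1)L^{n-1}}(K_L^n)\ge\xi_{g'L^{n-1}}(K_L^n)$, its Case 2 shows $\xi_{g'L^{n-1}+h_0}(K_L^n)\ge\xi_{g'L^{n-1}}(K_L^n)$ for $h_0<L^{n-1}$, and its Case 3 supplies the analogous increments along the finer base-$L$ decomposition of $\lfloor L^n/2\rfloor$ when $L$ is odd. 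Since these increments are all nonnegative, $\xi_m(K_L^n)$ is non-decreasing as $m$ sweeps upward through $[L^{n-1},\lfloor L^n/2\rfloor]$; telescoping the increments from $gL^{n-1}$ up to $m$ then yields $\xi_m(K_L^n)\ge\xi_{gL^{n-1}}(K_L^n)$, as desired.

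I expect the main obstacle to be precisely this last subcase, because it is the only place where the two prior lemmas do not combine as black boxes: one must descend into the increment analysis of Lemma \ref{Znd1} and verify that the staircase of milestones $L^{n-1},2L^{n-1},\dots,\lfloor L/2\rfloor L^{n-1}$ (together with the odd-$L$ tail) is monotone starting from an arbitrary milestone $gL^{n-1}$, not merely from $L^{n-1}$. Care is needed with the parity of $L$, since for odd $L$ the decomposition of $\lfloor L^n/2\rfloor$ carries lower-order terms that Case 3 of Lemma \ref{Znd1} is designed to absorb.
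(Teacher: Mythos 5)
Your proposal is correct and follows essentially the same route as the paper: both split at $t<n-1$ versus $t=n-1$, dispatch the first case by combining Lemma~\ref{Zn1} with the anchor from Lemma~\ref{Znd1}, and handle the genuinely new subcase $t=n-1$, $g\geq 2$ (where $g\leq\lfloor L/2\rfloor$ is forced) by descending into the stepwise increments of Cases 1--2 and Subcases 3.1--3.2 inside the proof of Lemma~\ref{Znd1} and telescoping from the milestone $gL^{n-1}$. Your observation that the prior lemmas cannot be used as black boxes in that last subcase matches exactly how the paper argues it.
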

\begin{proof}
\textbf{Case 1:} For $t\textless n-1$, by \textbf{Lemma} \ref{Znd1} and \textbf{Lemma} \ref{Zn1}, the inequality $\xi_m(K_L^n)\geq\xi_{gL^t}(K_L^n)$ holds for any $gL^t\leq m\leq \lfloor{L^n/2}\rfloor$, $1\leq g \leq t-1$.

\textbf{Case 2:} For $t=n-1$, we need to prove that $\xi_m(K_L^n)\geq\xi_{gL^{n-1}}(K_L^n)$ for any $gL^{n-1}\leq m\leq \lfloor{L^n/2}\rfloor$, $1\leq g \leq \lfloor{L/2}\rfloor$. Although the integer satisfies $1\leq g\leq L-1$, because of $gL^{n-1}\leq \lfloor{L^n/2}\rfloor$, one can obtain that $g\leq \lfloor{L/2}\rfloor$. By the proof of {\bf Case 1} and {\bf Case 2} in \textbf{Lemma} \ref{Znd1}, $\xi_m(K_L^n)\geq\xi_{gL^{n-1}}(K_L^n)$ for any $gL^{n-1}\leq m\leq \lfloor{L/2}\rfloor L^{n-1}$. If $L$ is even , $\lfloor{L^n/2}\rfloor={L\over2}L^{n-1}$, we can obtain that $\xi_m(K_L^n)\geq gL^{n-1}(K_L^n)$ for any $gL^{n-1}\leq m\leq \lfloor{L/2}\rfloor L^{n-1}=\lfloor{L^n/2}\rfloor$, our result holds. While for odd $L$, because of $\lfloor{L^n/2}\rfloor=\sum_{i=0}^{n-1}\lfloor{L/2}\rfloor L^{n-1-i}$, by the proof of Subcase 3.1 and Subcase 3.2 in \textbf{Lemma} \ref{Znd1}, we have $\xi_m(K_L^n)\geq\xi_{\lfloor{L/2}\rfloor L^{n-1}}(K_L^n)$ for any $\lfloor{L/2}\rfloor L^{n-1}\leq m\leq\lfloor{L^n/2}\rfloor$. So for odd $L$, $\xi_m(K_L^n)\geq\xi_{gL^{n-1}}(K_L^n)$ for any $gL^{n-1}\leq m\leq \lfloor{L^n/2}\rfloor$ and $1\leq g\leq \lfloor{L/2}\rfloor$.

In summary, for any integers $0\leq t\leq n-1$, $1\leq g\leq L-1$, $gL^t\leq m\leq\lfloor{L^n/2}\rfloor$, $\xi_m(K_L^n)\geq\xi_{gL^{t}}(K_L^n)$. The proof is completed.
\end{proof}

Lemmas IV.3-IV.8 imply the complex layered-increasing properties and self-similarity of the function $\xi_m(K_L^n)$. It is crucial to $\mathcal{P}$-conditional edge-connectivities of hamming graph $K_L^n$. Analysising the properties of the function $\xi_m(K_L^n)$ might be of interest in their own right.

\section{Unified method for $\mathcal{P}$-conditional edge-connectivities of Hamming graph $K_L^n$}

\begin{figure*}[htbp]
\begin{center}
\scalebox{0.8}{\includegraphics{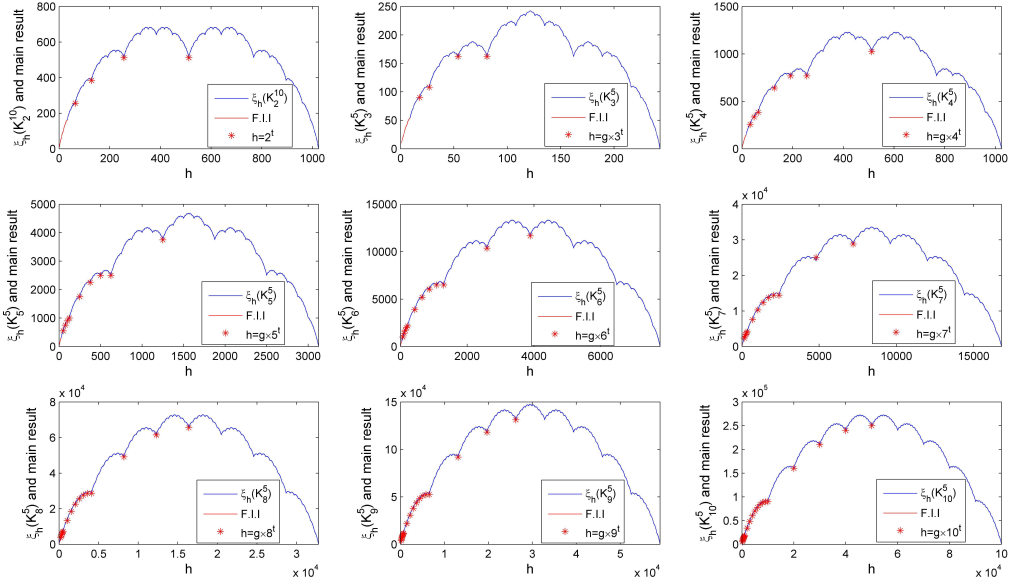}}
\caption{The main results on $\lambda(\mathcal{P}_i^k, K_L^n)$ with $\theta_\mathcal{P}(K_L^n)=gL^t$ and $\theta_\mathcal{P}(K_L^n)\leq L^{\lfloor{n\over 2}\rfloor}$, $\xi_h(K_L^n)$ and $\xi_{gL^t}(K_L^n)$ for $2\leq L\leq 10$.}
\end{center}
\label{fig}
\end{figure*}

\begin{thm}\label{thgL}
Let $n$, $g$, $t$, $L$ be four integers, $n\geq 1$, $L\geq2$, $0\leq t \leq n-1$, $1\leq g\leq L-1$ and $gL^{t}\leq \lfloor{{L^n}/2}\rfloor$. If $\mathcal{P}$-conditional edge-connectivity is bipartite and $\theta_{\mathcal{P}}(K^{n}_{L})=gL^{t}$. We obtain that
$\lambda(\mathcal{P},K^{n}_{L})=\xi_{gL^{t}}(K^{n}_{L})=\delta_{L}(K_{L})ngL^{t}-\delta_{L}(K_{p})gtL^{t}-g(g-1)L^{t}$ $=g[(L-1)(n-t)-(g-1)]L^{t}$.
\end{thm}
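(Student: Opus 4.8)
The plan is to prove the equality $\lambda(\mathcal{P},K_L^n)=\xi_{gL^t}(K_L^n)$ by squeezing $\lambda(\mathcal{P},K_L^n)$ between $\xi_{gL^t}(K_L^n)$ from above and below, and then to read off the closed form by substituting the trivial single‑term $L$‑base decomposition of $m=gL^t$ into the formula for $\xi_m(K_L^n)$ established in Section III. For the closed form itself, note that $m=gL^t$ has the decomposition $s=0$, $a_0=g$, $b_0=t$, so every cross term in $ex_m(K_L^n)$ vanishes and $ex_{gL^t}(K_L^n)=(L-1)gtL^t+(g-1)gL^t$. Since $\delta_L(K_L)=L-1$, the Handshaking relation $\xi_{gL^t}(K_L^n)=(L-1)n\,gL^t-ex_{gL^t}(K_L^n)$ collapses to $(L-1)gL^t(n-t)-g(g-1)L^t=g[(L-1)(n-t)-(g-1)]L^t$; this part is purely a substitution and is routine.

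For the lower bound $\lambda(\mathcal{P},K_L^n)\geq\xi_{gL^t}(K_L^n)$, I would take a minimum $\mathcal{P}$-conditional edge-cut $F$. By Theorem \ref{bipart} the connectivity is bipartite, so $K_L^n-F$ splits into exactly two parts $X$ and $\overline{X}$, each satisfying $\mathcal{P}$ and each connected (for $\mathcal{P}_6$ one instead invokes $\beta_m(K_L^n)=\xi_m(K_L^n)$, which holds for Hamming graphs). Because each part satisfies $\mathcal{P}$ and $\theta_{\mathcal{P}}(K_L^n)=gL^t$ is by definition the least order of a $\mathcal{P}$-subgraph, both $|X|\geq gL^t$ and $|\overline{X}|\geq gL^t$. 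Writing $m=\min\{|X|,|\overline{X}|\}$, one has $gL^t\leq m\leq\lfloor L^n/2\rfloor$; since the smaller part and its complement are both connected, the definition of $\xi_m$ gives $|F|\geq\xi_m(K_L^n)$, and Lemma \ref{gld} supplies $\xi_m(K_L^n)\geq\xi_{gL^t}(K_L^n)$ throughout this range. Chaining these yields $\lambda(\mathcal{P},K_L^n)=|F|\geq\xi_{gL^t}(K_L^n)$. This direction is a clean composition of already-proved results.

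For the upper bound $\lambda(\mathcal{P},K_L^n)\leq\xi_{gL^t}(K_L^n)$, I would use the explicit optimal set $L_{gL^t}^n$ from the Section III construction, which consists of the first $g$ $L$-ary $t$-dimensional sub-layers, realizes $\xi_{gL^t}(K_L^n)=|[L_{gL^t}^n,\overline{L_{gL^t}^n}]|$, and has both $K_L^n[L_{gL^t}^n]$ and $K_L^n[\overline{L_{gL^t}^n}]$ connected. It then suffices to certify that $[L_{gL^t}^n,\overline{L_{gL^t}^n}]$ is a genuine $\mathcal{P}$-conditional edge-cut. The small side has exactly $\theta_{\mathcal{P}}(K_L^n)=gL^t$ vertices and is the canonical order-minimizer among $\mathcal{P}$-subgraphs, so it satisfies $\mathcal{P}$; the large side has $L^n-gL^t\geq gL^t$ vertices and is $K_L^n$ with only $g$ sub-layers of dimension $t<n$ deleted, hence is at least as rich in each monotone feature (order $\geq h$, minimum/average degree, containing a cycle, containing a sub-network), so it satisfies $\mathcal{P}$ as well. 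Combining the two bounds closes the argument.

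I expect the upper bound to be the genuine obstacle: the lower bound is forced by bipartiteness, the definition of $\theta_{\mathcal{P}}$, and the monotonicity Lemma \ref{gld}, whereas the upper bound requires verifying that the \emph{specific} optimal set $L_{gL^t}^n$ \emph{and} its complement both satisfy $\mathcal{P}$. This verification leans on the self-similar, layered structure of the construction (Lemmas \ref{fenjie}--\ref{gld}) together with the monotone nature of each of the six properties under passing to larger and denser induced subgraphs; making it uniform across all $\mathcal{P}\in\mathcal{B}$ rather than case-by-case is the delicate point.
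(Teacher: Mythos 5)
Your proposal is correct and follows essentially the same route as the paper's own proof: the lower bound by combining bipartiteness, the bound $|X|,|\overline{X}|\geq\theta_{\mathcal{P}}(K_L^n)=gL^t$, and the monotonicity Lemma~\ref{gld}; the upper bound by exhibiting the explicit union of $g$ $L$-ary $t$-dimensional sub-layers $L_{gL^t}^n$ and checking that both it and its complement are connected and satisfy $\mathcal{P}$; and the closed form by substituting the single-term decomposition $m=gL^t$ into $\xi_m(K_L^n)=(L-1)nm-ex_m(K_L^n)$. Even the point you flag as delicate --- certifying that the specific set $L_{gL^t}^n$ and its complement satisfy the abstract $\mathcal{P}$ --- is treated in the paper exactly as you propose, by verifying the concrete monotone features (order, connectedness, minimum degree $(L-1)t$, cycle and sub-layer structure) of both sides rather than by a fully abstract argument.
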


\begin{proof}
\textbf{Lower bound:}
	For any conditional $\mathcal{P}$,
	let $F$ be any minimum $\mathcal{P}$-conditional edge-cut of $K^{n}_{L}$, and $K^{n}_{L}-F$ is disconnected. And its deletion from $K^{n}_{L}$ results in $p$ components $C_{1},C_{2}\ldots C_{p}$, $p\geq2$. Each component satisfies the condition $\mathcal{P}$. As $\mathcal{P}$-conditional edge-connectivity of $K^{n}_{L}$ is bipartite, and the minimality of $\lambda(\mathcal{P},K^{L}_{n})$ edge-cut  of $K_L^n$, we have $p=2$. Let $C^{*}$ be the resulting minimum component. Because of $\delta_{\mathcal{P}}(K^{n}_{L})=gL^{t}$, by the \textbf{Lemma} \ref{gld}, we have $|F|\ge |[ V(C^{*}),\overline{V(C^{*})}]|$ $=\xi_{gL^{t}}(K^{n}_{L})=g[(L-1)(n-t)-(g-1)]L^{t}.$

The above equality holds because of $\xi_{gL^{t}}(K^{n}_{L})$ $=\delta_{L-1}(K^{n}_{L})ngL^{t}-(L-1)gtL^{t}-g(g-1)L^{t}=g[(L-1)(n-t)-(g-1)]L^{t}$.

\par \textbf{Upper bound:}
	It is sufficient to show that, there exists a $\mathcal{P}$-conditional edge-cut of $K^{n}_{L}$ with size $\xi_{gL^{t}}(K^{n}_{L})$ $=g[(L-1)(n-t)-(g-1)]L^{t}$, $K^{n}_{L}[L_{gL^{t}}^{n}]=\overbrace{0\ldots 00}^{n-t}X_{t}\ldots$ $X_{2}X_{1}\bigcup\overbrace{0\ldots 01}^{n-t}X_{t}\ldots X_{2}X_{1}\bigcup\dots\overbrace{0\ldots 0g-1}^{n-t}X_{t}\ldots X_{2}X_{1}$.
 Each $\overbrace{0\ldots 0f}^{n-t}X_{t}\ldots X_{2}X_{1}$ for $0\leq f\leq g-1$ is isomorphic to $L$-ary $t$-dimensional sub-layers and $(L-1)t$ regular. The minimum degree of them happens to be $(L-1)t$.
By the definition of $K_L^n$, there exists at least one edge between these $L$-ary $t$-dimensional sub-layers.
So, the induced subgraph $K^{n}_{L}[L_{gL^{t}}^{n}]$ is connected with $gL^{t}$ vertices, and contains a cycle (If $L=2$, $t\geq2$; if $L\geq3$, $t\geq1$). On the other hand, $|\overline{L_{gL^{t}}^{n}}|=L^{n}-gL^{t}=(L-1)L^{n-1}+(L-1)L^{n-2}+\cdots+(L-1)L^{t+1}+(L-g)L^{t}$.
$K^{n}_{L}[\overline{L_{gL^{t}}^{n}}]=\bigcup\limits_{t+1\leq k\leq n-1,1\leq b\leq L-1}
\overbrace{0\ldots 0b}^{n-k}X_{k}\ldots X_{2}X_{1}$ $\bigcup\limits_{g\leq o\leq L-1}\overbrace{0\ldots 0o}^{n-t}X_{t}\ldots X_{2}X_{1}$.

For $t+1\leq k\leq n-1,1\leq b\leq L-1$,
each $\overbrace{0\ldots 0b}^{n-k}X_{k}$ $\ldots X_{2}X_{1}$ is
$L$-ary $k$-dimensional sub-layer of $K_L^n$, which is $(L-1)k$ regular and isomorphic to $K_{L}^{k}$. There is at least one edge between different sub-layers.
For each $g\leq o\leq L-1$, $\overbrace{0\ldots 0o}^{n-t}X_{t}\ldots X_{2}X_{1}$
is $L$-ary $t$-dimensional sub-layer of $K_L^n$,  which is $(L-1)t$ regular and isomorphic to $K_{L}^{t}$. Also there is at least one edge between different sub-layers.
And $\overbrace{0\ldots 0l}^{n-t-1}X_{t+1}\ldots X_{2}X_{1}$ are connected with $\overbrace{0\ldots 0L-1}^{n-t}X_{t}\ldots X_{2}X_{1}$ by the definition of $K_{L}^{n}$. So $K^{n}_{L}[\overline{L_{gL^{t}}^{n}]}$ is connected with $L^{n}-gL^{t}$ vertices ($\geq gL^{t}$), and contains at least a cycle.
As $\mathcal{P}$-conditional edge-connectivity is bipartite and $\theta_{\mathcal{P}}(K^{n}_{L})=gL^{t}$. Thus, $[L_{gL^{t}}^{n}, \overline{L_{gL^{t}}^{n}}]$ is a $\mathcal{P}$-conditional edge-cut of $K^{n}_{L}$.
So $|[L_{gL^{t}}^{n}, \overline{L_{gL^{t}}^{n}}]|=\xi_{gL^{t}}(K^{n}_{L})=g[(L-1)(n-t)-(g-1)]L^{t}$.
\end{proof}
\begin{table*}[htbp]\scriptsize
\caption{$\mathcal{P}_i^k$-conditional edge-connectivity $\lambda(\mathcal{P}_i^k, K_L^n)$.}
\begin{center}
\begin{tabular*}{\hsize}{@{}@{\extracolsep{\fill}}lllllllllll}
\hline
\hline
$\lambda(\mathcal{P}_i^k, K_L^n)$&$t$&$K_2^n=Q_n$&$K_3^n$&$K_4^n$&$\cdots$&$K_L^n$\\
$\lambda(\mathcal{P}_1^{L^t}, K_L^n)$&$0\leq t\leq n-1$&$(n-t)2^t$&$2(n-t)3^t$&$3(n-t)4^t$&$\cdots$&$(L-1)(n-t)L^t$\\
$\lambda(\mathcal{P}_2^{t}, K_L^n)$&$0\leq t\leq n-1$&$(n-t)2^t$&$2(n-t)3^t$&$3(n-t)4^t$&$\cdots$&$(L-1)(n-t)L^t$\\
$\lambda(\mathcal{P}_3^{c}, K_L^n)$&&$(n-2)2^2$&$2(n-1)3^1$&$3[(n-0)-2]4^0$&$\cdots$&$3[(L-1)(n-0)-2]L^0$\\
$\lambda(\mathcal{P}_4^{(L-1)t}, K_L^n)$&$0\leq t\leq n-1$&$(n-t)2^{t}$&$2(n-t)3^{t}$&$3(n-t)4^{t}$&$\cdots$&$(L-1)(n-t)L^t$\\
$\lambda(\mathcal{P}_5^{(L-1)t}, K_L^n)$&$0\leq t\leq n-1$&$(n-t)2^{t}$&$2(n-t)3^{t}$&$3(n-t)4^{t}$&$\cdots$&$(L-1)(n-t)L^t$\\
$\lambda(\mathcal{P}_6^{L^t}, K_L^n)$&$0\leq t\leq n-1$&$(n-t)2^{t}$&$2(n-t)3^{t}$&$3(n-t)4^{t}$&$\cdots$&$(L-1)(n-t)L^t$\\
\hline
\hline
\end{tabular*}
\label{tab3}
\end{center}
\end{table*}

By \textbf{Theorem} \ref{thgL}, the exact values of
$L^{t}$-extra edge-connectivity, $t$-embedded edge-connectivity, cyclic edge-connectivity, $(L-1)t$-super edge-connectivity, $(L-1)t$-average edge-connectivity and $L^{t}$-th isoperimetric edge-connectivity of $K_L^n$ share the same values in form of $(L-1)(n-t)L^{t}$ because they are all bipartite.

Based on the \textbf{Theorem} \ref{thgL} and the following \textbf{Theorem} \ref{th2}, an algorithm can be designed to calculate bipartite $\mathcal{P}$-conditional edge-connectivity of $K_L^n$ with $\theta_{\mathcal{P}(K_L^n)}(K_L^n)=gL^t$ or $\theta_{\mathcal{P}(K_L^n)}(K_L^n)\leq L^{\lfloor{n\over2}\rfloor}$ in the \textbf{Algorithm.1}. In this algorithm,  $O(K_L)=L^{\lfloor{L\over2}\rfloor}$. The time complexity of the algorithm is $O(log_L(N))$, where $N=L^n$.

\begin{cor}\label{cor1}
For any integers $0\leq t\leq n-1$, $\lambda(\mathcal{P}_1^{L^t},K_L^n)$, $\lambda(\mathcal{P}_2^{t},K_L^n)$,
$\lambda(\mathcal{P}_4^{(L-1)t},K_L^n)$, $\lambda(\mathcal{P}_5^{(L-1)t},K_L^n)$ and $\lambda(\mathcal{P}_6^{L^t},K_L^n)$ share the same value $\xi_{L^{t}}(K^{n}_{L})=(L-1)(n-t)L_{t}$.
And $\lambda(\mathcal{P}_3^c,K_L^n)=g[(L-1)(n-t)-(g-1)]L^{t}$.
\end{cor}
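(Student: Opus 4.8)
The plan is to obtain Corollary \ref{cor1} as a direct specialization of Theorem \ref{thgL}. That theorem already evaluates $\lambda(\mathcal{P},K_L^n)$ for \emph{every} bipartite property $\mathcal{P}$ whose minimum feasible order can be written as $\theta_{\mathcal{P}}(K_L^n)=gL^{t}$ with $1\le g\le L-1$ and $gL^{t}\le\lfloor L^n/2\rfloor$, returning the value $g[(L-1)(n-t)-(g-1)]L^{t}$. Consequently, for each of the six properties I only need to verify two hypotheses: that the associated conditional edge-connectivity is bipartite, and that $\theta_{\mathcal{P}_i^{k}}(K_L^n)$ is of the form $gL^{t}$ with the correct $(g,t)$. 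The first hypothesis is immediate from Theorem \ref{bipart}, which places all six properties in the class $\mathcal{B}$, so the entire proof reduces to computing the minimum orders $\theta_{\mathcal{P}_i^{k}}(K_L^n)$ and substituting into Theorem \ref{thgL}.

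The substantive work is the evaluation of $\theta$. First I would dispose of $\mathcal{P}_1^{L^{t}}$ and $\mathcal{P}_6^{L^{t}}$: each asks only for at least $L^{t}$ vertices, so $\theta=L^{t}$ and $g=1$ trivially. For $\mathcal{P}_2^{t}$, the $t$-dimensional sub-layer $\overbrace{0\cdots0}^{n-t}X_{t}\cdots X_{1}\cong K_L^{t}$ has exactly $L^{t}$ vertices and is the smallest subnetwork of dimension $t$, giving $\theta=L^{t}$, $g=1$. For the degree conditions $\mathcal{P}_4^{(L-1)t}$ and $\mathcal{P}_5^{(L-1)t}$, the same sub-layer $K_L^{t}$ is $(L-1)t$-regular on $L^{t}$ vertices, so it realizes both minimum and average degree exactly $(L-1)t$; the real content is the matching lower bound that no induced subgraph on fewer than $L^{t}$ vertices attains minimum (resp.\ average) degree $(L-1)t$. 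For the average-degree case this follows by comparing with the density function: an $m$-vertex subgraph of average degree $\ge(L-1)t$ forces $ex_m(K_L^n)\ge(L-1)t\,m$, while $ex_{L^{t}}(K_L^n)=(L-1)t\,L^{t}$ with the per-vertex density $ex_m(K_L^n)/m$ non-decreasing in $m$, a consequence of the explicit layered formula for $ex_m$ together with Lemma \ref{fenjie}; hence $m\ge L^{t}$. The minimum-degree bound then follows a fortiori, since minimum degree $\ge(L-1)t$ implies average degree $\ge(L-1)t$. In all five cases $g=1$, and Theorem \ref{thgL} yields the common value $\xi_{L^{t}}(K_L^n)=(L-1)(n-t)L^{t}$.

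It remains to treat $\mathcal{P}_3^{c}$, where $\theta_{\mathcal{P}_3^{c}}(K_L^n)$, the minimum order of a cyclic induced subgraph, is governed by the girth and splits into three regimes. For $L=2$ the hypercube $Q_n$ is bipartite with girth $4$, so the smallest cyclic subgraph is $Q_2=C_4$ on $4=2^{2}$ vertices, whence $(g,t)=(1,2)$ and Theorem \ref{thgL} gives $(n-2)2^{2}$. For $L=3$ each factor $K_3$ is already a triangle, so $\theta=3=3^{1}$, i.e.\ $(g,t)=(1,1)$, yielding $2(n-1)3^{1}$. For $L\ge 4$ a single factor $K_L$ still contains a triangle but now $3=3\cdot L^{0}$ with $3\le L-1$, so $(g,t)=(3,0)$ and Theorem \ref{thgL} gives $3[(L-1)n-2]$; all three are subsumed in the stated form $g[(L-1)(n-t)-(g-1)]L^{t}$. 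I expect the main obstacle to be the lower-bound halves of the $\theta$-computations for $\mathcal{P}_4$ and $\mathcal{P}_5$, namely ruling out small, irregular dense subgraphs of $K_L^n$; this is precisely where the monotonicity and self-similarity properties of $ex_m(K_L^n)$ assembled in Lemmas \ref{fenjie}--\ref{gld} do the essential work. Once each $\theta$ is pinned down, the remainder is a routine substitution into Theorem \ref{thgL}.
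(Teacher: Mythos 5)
Your proposal takes essentially the same route as the paper's own proof: bipartiteness of all six properties from Theorem \ref{bipart}, then direct substitution of $\theta_{\mathcal{P}_i^k}(K_L^n)=gL^{t}$ into Theorem \ref{thgL}, including the identical three-regime girth analysis for the cyclic case ($(g,t)=(1,2)$ for $L=2$, $(1,1)$ for $L=3$, $(3,0)$ for $L\ge 4$). You are in fact somewhat more careful than the paper, which asserts the values $\theta_{\mathcal{P}_4^{(L-1)t}}=\theta_{\mathcal{P}_5^{(L-1)t}}=L^{t}$ without argument, whereas you sketch the density lower bound via $ex_m(K_L^n)$; to fully close that step you would need the \emph{strict} inequality $ex_m(K_L^n)/m<(L-1)t$ for all $m<L^{t}$, not merely monotonicity of the density.
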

\begin{proof}
As the hamming graph $K_L^n$ is $(L-1)n$ regular, with $L^n$ vertices,
for any integers $0\leq t\leq n-1$, $\lambda(\mathcal{P}_1^{L^t},K_L^n)$, $\lambda(\mathcal{P}_2^{t},K_L^n)$, $\lambda(\mathcal{P}_3^c,K_L^n)$,
$\lambda(\mathcal{P}_4^{(L-1)t},K_L^n)$, $\lambda(\mathcal{P}_5^{(L-1)t},K_L^n)$ and $\lambda(\mathcal{P}_6^{L^t},K_L^n)$ are well-defined, and then the set \{$\mathcal{P}_1^{L^t}$, $\mathcal{P}_2^{t}$, $\mathcal{P}_3^c$,
$\mathcal{P}_4^{(L-1)t}$, $\mathcal{P}_5^{(L-1)t}$, $\mathcal{P}_6^{L^t} \}\subseteq \mathcal{B}$.
By \textbf{Theorem} \ref{thgL}, the exact values of
$\lambda(\mathcal{P}_1^{L^t},K_L^n)$ $\lambda(\mathcal{P}_2^{t},K_L^n)$,
$\lambda(\mathcal{P}_4^{(L-1)t},K_L^n)$, $\lambda(\mathcal{P}_5^{(L-1)t},K_L^n)$ and $\lambda(\mathcal{P}_6^{L^t},K_L^n)$ share the same value $\xi_{L^{t}}(K^{n}_{L})=(L-1)(n-t)L^{t}$, which equal to the minimum number of links-faulty resulting in an $L$-ary $t$-dimensional sub-layer with $L^{t}$ vertices  from $K_{L}^{n}$. Note that the value of $g$ is $1$, but for the case of cyclic edge-connectivity, we have $g\leq 3$. Actually, for $K_2^n$, the shortest cycle is $C_4$ with length $L^2=2^2$ for $g=1$, $t=2$, and for $K_3^n$, the shortest cycle is $C_3$ with length $L^1=3^1$ for $g=1$, $t=1$, while for $K_L^n$, $L\geq4$, the shortest cycle is $C_3$ with length $3L^0=3\times L^0$ for $g=3$, $t=0$. So $\lambda(\mathcal{P}_3^c,K_L^n)=g[(L-1)(n-t)-(g-1)]L^{t}$.
The proof is finished.
\end{proof}

\begin{algorithm}[!h]{\normalsize
\DontPrintSemicolon
\SetAlgoLined
\KwResult{Calculating the $h$-extra edge-connectivity of $K_L^n$}
\SetKwInOut{Input}{Input}\SetKwInOut{Output}{Output}
\Input{Given positive integers $n$, $L$, $i$, $k$ and $K_L^n$, $\theta_\mathcal{P}(K_L^n)$,$O(K_L)$}
\Output{The $\mathcal{P}$ conditional edge connectivity of $K_L^n$ is $S$}
\BlankLine
$I_0\leftarrow 0$;
$v\leftarrow 1$;
\While{$v\leq L$}{
$I_v\leftarrow {{v(v-1)}\over2}$;
$v\leftarrow v+1$;
$\delta_v\leftarrow v-1$;
}
\If{$\theta_\mathcal{P}(K_L^n)\leq O(K_L)$}{
        {$m\leftarrow\theta_\mathcal{P}(K_L^n)$;\;}
}
\If{$\theta_\mathcal{P}(K_L^n)==gL^t$}{{
$m\leftarrow gL^t$;}
}
$t\leftarrow m$;\;
$s'\leftarrow\lfloor{\log_Lt}\rfloor+1$;\;
\While{$s'==s'-1$}
{
  \eIf{$t\leq L$}
  {$c(s')\leftarrow t$;break}
  {$c(s')\leftarrow\mod(t,L)$;
   $t\leftarrow\lfloor{t\over L}\rfloor$;
   $s'\leftarrow s'-1$;
  }
}
{$k\leftarrow1$;
$z\leftarrow1$;\;}
\While{$k\leq\lfloor{\log_Lm}\rfloor+1$}
{\eIf{$c(k)\neq0$}
  {
  $a(z)\leftarrow c(k)$;
  $b(z)\leftarrow\lfloor{\log_Lm}\rfloor+1-k$;
  $z\leftarrow z+1$;
  }
  { $k\leftarrow k+1$;}
}
{$q\leftarrow 1$;
$S\leftarrow \delta_Lnm$;\;}
\While{$q\leq z$}
{
  \eIf{$q\leftarrow1$ or $q\leftarrow z$}
  {
  $S\leftarrow S-\delta_La(q)b(q)L^{b(q)}-I_{a(q)}L^{b(q)}$;
  $q\leftarrow q+1$;
  }
  {
  $S\leftarrow S-\delta_La(q)b(q)L^{b(q)}-I_{a(q)}L^{b(q)}$;
  $w\leftarrow q+1$;\;
  \While{$w\leq z$}
    {
    $S\leftarrow S-2\delta_{a(w)}a(w)L^{b(w)}$;
    $w\leftarrow w+1$;
    }
  $q\leftarrow q+1$;
  }
}
\caption{$\mathcal{P}$-conditional edge-connectivity of $K^{n}_{L}$}}
\end{algorithm}

\begin{cor}\label{cor1}
For any integers $0\leq t\leq n-1$, \\
1). \cite{em21}(2021)$\lambda(\mathcal{P}_2^{t},K_3^n)=\eta_{t}(K_3^n)=\xi_{3^{t}}(K^{n}_{3})=2(n-t)3^{t}$;\\
2). \cite{lx16}(2012)$\lambda(\mathcal{P}_2^{t},K_2^n)=\eta_{t}(K_2^n)=\xi_{2^{t}}(K^{n}_{2})=(n-t)2^{t}$;\\
3). \cite{lx13}(2013)$\lambda(\mathcal{P}_2^{t},\mathcal{B}_n)=\eta_{t}(\mathcal{P}_2)=\xi_{2^{t}}(\mathcal{B}_n)=(n-t)2^{t}$.
\end{cor}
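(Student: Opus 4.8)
The plan is to recognize all three statements as immediate specializations of Theorem~\ref{thgL}, since the $t$-embedded edge-connectivity $\eta_t$ is by definition exactly $\lambda(\mathcal{P}_2^t,\cdot)$. First I would record the two structural inputs that Theorem~\ref{thgL} requires. The property $\mathcal{P}_2^t$ (``lying in / embedding a $t$-dimensional subnetwork'') is bipartite, which was already established in Theorem~\ref{bipart}, so $\mathcal{P}_2^t\in\mathcal{B}$. Next I would compute the threshold $\theta_{\mathcal{P}_2^t}(K_L^n)$: the smallest induced subgraph satisfying $\mathcal{P}_2^t$ is an $L$-ary $t$-dimensional sub-layer $\overbrace{0\cdots 0}^{n-t}X_t\cdots X_2X_1$, which has exactly $L^t$ vertices and is isomorphic to $K_L^t$. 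Hence $\theta_{\mathcal{P}_2^t}(K_L^n)=L^t=gL^t$ with $g=1$, $0\le t\le n-1$ and $L^t\le\lfloor L^n/2\rfloor$.

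With these two inputs in place, Theorem~\ref{thgL} applies verbatim and yields $\lambda(\mathcal{P}_2^t,K_L^n)=\xi_{L^t}(K_L^n)=g[(L-1)(n-t)-(g-1)]L^t$, which at $g=1$ collapses to $(L-1)(n-t)L^t$. Substituting $L=3$ gives statement (1), namely $\eta_t(K_3^n)=\xi_{3^t}(K_3^n)=2(n-t)3^t$, and substituting $L=2$ gives statement (2), namely $\eta_t(K_2^n)=\xi_{2^t}(K_2^n)=(n-t)2^t$.

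For the third statement I would invoke the transfer result $\xi_m(\mathcal{B}_n)=\xi_m(K_2^n)$ for every $m\le 2^n$ (Corollary~\ref{bdk2}). Since the bijective connection networks $\mathcal{B}_n$ are $n$-regular with $2^n$ vertices and $\mathcal{P}_2^t$ is bipartite on them as well, the lower-bound and upper-bound argument of Theorem~\ref{thgL} carries over once the value of $\xi_{2^t}$ is known; applying the transfer at $m=2^t$ then gives $\lambda(\mathcal{P}_2^t,\mathcal{B}_n)=\xi_{2^t}(\mathcal{B}_n)=\xi_{2^t}(K_2^n)=(n-t)2^t$, which is statement (3).

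The step that deserves the most care -- and essentially the only genuine obstacle -- is the justification that $\theta_{\mathcal{P}_2^t}(K_L^n)=L^t$ together with the fact that a minimum $t$-embedded edge-cut separates the network into exactly two parts. For the Hamming graph this is transparent from the explicit sub-layer $\overbrace{0\cdots 0}^{n-t}X_t\cdots X_2X_1$ and its connected complement, constructed in the proof of Theorem~\ref{thgL}. For the hypercube-like networks $\mathcal{B}_n$, which lack the clean recursive sub-layer decomposition of $K_L^n$, I would not re-derive the isoperimetric function from scratch but instead lean on the bipartite property (Theorem~\ref{bipart}) together with Corollary~\ref{bdk2}, which is precisely what makes the unified framework reproduce the previously known values of \cite{em21}, \cite{lx16} and \cite{lx13}.
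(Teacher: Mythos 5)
Your proposal is correct and follows essentially the same route as the paper: the paper states this corollary without a separate proof precisely because it is the specialization of Theorem~\ref{thgL} with $g=1$ and $\theta_{\mathcal{P}_2^t}(K_L^n)=L^t$ (an $L$-ary $t$-dimensional sub-layer being the smallest induced subgraph satisfying $\mathcal{P}_2^t$), combined with the bipartite property from Theorem~\ref{bipart} and, for part (3), the transfer $\xi_m(\mathcal{B}_n)=\xi_m(K_2^n)$ of Corollary~\ref{bdk2}. Your explicit attention to why the argument carries over to $\mathcal{B}_n$, which lacks the sub-layer structure, is a point the paper leaves implicit, but it is the same underlying argument.
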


For $2\leq L\leq 10$, the main results of $\mathcal{P}_i^k$-conditional edge-connectivity of $K_L^n$ with $\theta_\mathcal{P}(K_L^n)=gL^t$ and $\theta_\mathcal{P}(K_L^n)\leq L^{\lfloor{n\over 2}\rfloor}$, $\xi_h(K_L^n)$ and $\xi_{gL^t}(K_L^n)$ are shown in the Fig.2, where F.I.I represents the exact values of function $\xi_h(K_L^n)$ in the first increasing interval $1\leq h \leq L^{\lfloor{n\over2}\rfloor}$. Once the $\mathcal{P}$-conditional edge-connectivity of $K_L^n$ is bipartite and $\theta_{\mathcal{P}}(K_L^n)=\min\{|X|\,|\,K_L^n[X]$ satisfying the property $\mathcal{P}\}$ is less than $L^{\lfloor{n\over2}\rfloor}$, the exact value of $\mathcal{P}$-conditional edge-connectivity of $K_L^n$ is $\xi_{\theta_{\mathcal{P}(K_L^n)}}(K_L^n)$.
We also investigate the cases of bipartite $\mathcal{P}$-conditional edge-connectivity of $K_L^n$ with $\theta_{\mathcal{P}(K_L^n)}(K_L^n)=gL^t$.
The exact values of $\mathcal{P}_i^k$-conditional edge-connectivity of $K_L^n$, $\lambda(\mathcal{P}_i^k, K_L^n)$, are shown in Table \ref{tab3}.

\begin{thm}\label{th2}
Let $n$ and $L$ be two positive integers. If $\mathcal{P}$-conditional edge-connectivity of $K^{n}_{L}$ is bipartite with $h=\theta_{\mathcal{P}}(K^{n}_{L})\leq L^{\lfloor\frac{n}{2}\rfloor}$. Then, one can obtain that $\lambda(\mathcal{P},K^{n}_{L})=\xi_{\theta_{\mathcal{P}}(K^{n}_{L})}(K^{n}_{L})=\xi_{h}(K^{n}_{L})=(L-1)nh-ex_{h}(K^{n}_{L})$ where $h=\sum\nolimits_{i=0}^{s}a_{i}L^{b_{i}}$, $ex_{h}(K^{n}_{L})=\sum\nolimits_{i=0}^{s}[(L-1)a_{i}b_{i}L^{b_{i}}+(a_{i}-1)a_{i}L^{b_{i}}]+2\sum\nolimits_{i=0}^{s-1}\sum\nolimits_{j=i+1}^{s}a_{i}a_{j}L^{b_j}$.
\end{thm}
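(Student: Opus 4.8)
The plan is to prove $\lambda(\mathcal{P},K^{n}_{L})=\xi_{h}(K^{n}_{L})$ by two matching inequalities, after first reducing the conditional edge-connectivity to a minimization of $\xi_m$ over the interval of admissible smaller-side sizes, and then reading off the closed form $(L-1)nh-ex_{h}(K^{n}_{L})$ from the corollary of Section III together with the identity $\xi_h=\beta_h=\xi_h^e$ that holds for the hamming graph. First, for the lower bound, I would exploit the bipartite hypothesis (guaranteed by Theorem \ref{bipart} for the properties in $\mathcal{B}$). Let $F$ be a minimum $\mathcal{P}$-conditional edge-cut; since the connectivity is bipartite, $K^{n}_{L}-F$ splits into exactly two parts, and I take $C^{*}$ to be the smaller one with $|V(C^{*})|=m$. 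Because every part satisfies $\mathcal{P}$ and $\theta_{\mathcal{P}}(K^{n}_{L})=h$ is the least order of any $\mathcal{P}$-subgraph, we get $h\le m$, while minimality of the smaller side forces $m\le\lfloor L^{n}/2\rfloor$. As a minimum cut equals the boundary $[V(C^{*}),\overline{V(C^{*})}]$, we obtain $|F|=|[V(C^{*}),\overline{V(C^{*})}]|\ge\beta_{m}(K^{n}_{L})=\xi_{m}(K^{n}_{L})$, so $\lambda(\mathcal{P},K^{n}_{L})\ge\min\{\xi_{m}(K^{n}_{L}):h\le m\le\lfloor L^{n}/2\rfloor\}$.

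The heart of the argument is to show that this minimum is attained at $m=h$, i.e. $\xi_{m}(K^{n}_{L})\ge\xi_{h}(K^{n}_{L})$ for all $h\le m\le\lfloor L^{n}/2\rfloor$. I would split the range at $L^{\lfloor n/2\rfloor}$. On $h\le m\le L^{\lfloor n/2\rfloor}$ the function is non-decreasing by Lemma \ref{Znz}, giving $\xi_{m}\ge\xi_{h}$ at once. For the tail $L^{\lfloor n/2\rfloor}<m\le\lfloor L^{n}/2\rfloor$ I would apply Lemma \ref{gld} with $g=1$ and $t=\lfloor n/2\rfloor$ (valid since $\lfloor n/2\rfloor\le n-1$ for $n\ge2$) to get $\xi_{m}\ge\xi_{L^{\lfloor n/2\rfloor}}$, and then Lemma \ref{Znz} once more, using $h\le L^{\lfloor n/2\rfloor}$, to conclude $\xi_{L^{\lfloor n/2\rfloor}}\ge\xi_{h}$. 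Combining the two pieces yields $\lambda(\mathcal{P},K^{n}_{L})\ge\xi_{h}(K^{n}_{L})$.

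For the matching upper bound I would use the nested optimal set $L_{h}^{n}$ of the first $h$ vertices in the lexicographic order, built by the Section III construction. That construction makes both $K^{n}_{L}[L_{h}^{n}]$ and $K^{n}_{L}[\overline{L_{h}^{n}}]$ connected; moreover $|\overline{L_{h}^{n}}|=L^{n}-h\ge h$ because $h\le L^{\lfloor n/2\rfloor}$ and $L^{\lceil n/2\rceil}\ge2$, so the smaller side genuinely has $h$ vertices and $L_{h}^{n}$ is a densest $h$-set, realizing $ex_{h}(K^{n}_{L})$. Since $\theta_{\mathcal{P}}(K^{n}_{L})=h$, a densest $h$-vertex induced subgraph inherits the property $\mathcal{P}$, and its larger complement does as well, so $[L_{h}^{n},\overline{L_{h}^{n}}]$ is a $\mathcal{P}$-conditional edge-cut. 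Its size is exactly $\xi_{h}(K^{n}_{L})=(L-1)nh-ex_{h}(K^{n}_{L})$ by the corollary, with $ex_{h}$ in the stated closed form, so $\lambda(\mathcal{P},K^{n}_{L})\le\xi_{h}(K^{n}_{L})$ and equality follows.

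I expect the genuinely delicate step to be the upper bound's claim that the specific optimal set $L_{h}^{n}$ and its complement actually satisfy the \emph{abstract} property $\mathcal{P}$: unlike the size and boundary bookkeeping, this must rest on the fact that $L_{h}^{n}$ is the densest bilateral-connected $h$-set, combined with the density-monotone character of the admissible properties (minimum degree, average degree, containing a cycle, containing at least $h$ vertices), each of which is inherited by a densest realization once some $h$-set has the property. By contrast, the monotonicity reduction in the second paragraph, though conceptually the crux of the lower bound, is essentially routine once Lemmas \ref{Znz} and \ref{gld} are available.
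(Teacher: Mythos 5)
Your proof follows the paper's argument essentially verbatim: the lower bound uses the bipartite hypothesis to reduce $\lambda(\mathcal{P},K_L^n)$ to $\min\{\xi_m(K_L^n):h\le m\le\lfloor L^n/2\rfloor\}$ and then invokes Lemmas \ref{Znz} and \ref{gld} (your explicit split of the range at $L^{\lfloor n/2\rfloor}$, applying Lemma \ref{gld} with $g=1$, $t=\lfloor n/2\rfloor$ and then Lemma \ref{Znz}, is exactly how the paper's citation of those two lemmas unwinds), while the upper bound exhibits the cut $[L_h^n,\overline{L_h^n}]$ from the Section III construction, of size $\xi_h(K_L^n)=(L-1)nh-ex_h(K_L^n)$. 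The one step you flag as delicate --- that the specific optimal set $L_h^n$ and its complement actually satisfy the abstract property $\mathcal{P}$ --- is asserted without further justification in the paper's proof as well, so your proposal matches the paper in both structure and level of rigor.
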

\begin{proof}
As for each $1\leq m\leq\lfloor L^{n}/2\rfloor$, one can find a subset $X_{m}^{*}=L_{m}^{n}\subset V(K^{n}_{L})$ satisfying that $\xi_{m}(K^{n}_{L})=|[L_{m}^{n},\overline{L_{m}^{n}}]|, |L_{m}^{n}|=m$ and both $K^{n}_{L}[L_{m}^n]$ and $K^{n}_{L}[\overline{L_{m}^n}]$ are connected with $2|E(K^{n}_{L}[L_{m}^n])|=ex_{m}(K^{n}_{L})$. Because $\mathcal{P}$-conditional edge-connectivity is bipartite, for any minimum $\mathcal{P}$-conditional edge-cut of $K^{n}_{L}$ with $m^{*}$ vertices for smaller component for some $m^{*}\leq \lfloor L^{n}/2\rfloor$. By \textbf{Lemma} \ref{Znz}, \textbf{Lemma} \ref{gld} and $\theta_{\mathcal{P}}(K^{n}_{L})=h\leq L^{\lfloor\frac{n}{2}\rfloor}$, we can deduce that $\lambda(\mathcal{P},K^{n}_{L})\geq \xi_{\theta_{\mathcal{P}}(K^{n}_{L})}(K^{n}_{L})=\xi_{h}(K^{n}_{L})$. Because $[L_{\theta_{\mathcal{P}}(K^{n}_{L})}^{n}, \overline{L_{\theta_{\mathcal{P}}(K^{n}_{L})}^{n}}]$ is a $\mathcal{P}$-conditional edge-cut, $\lambda(\mathcal{P},K^{n}_{L})\leq|[L_{\theta_{\mathcal{P}}(K^{n}_{L})}^{n}, \overline{L_{\theta_{\mathcal{P}}(K^{n}_{L})}^{n}}]|=|[L_{h}^{n}, \overline{L_{h}^{n}}]|=\xi_{h}(K_{L}^{n})=(L-1)nh-ex_{m}(K_{L}^{n})$.
So $\lambda(\mathcal{P},K^{n}_{L})=\xi_{\theta_{\mathcal{P}}(K^{n}_{L})}(K^{n}_{L})=\xi_{h}(K^{n}_{L})=(L-1)nh-ex_{h}(K^{n}_{L})$.
The result holds.
\end{proof}

\begin{cor}\label{cor2}
Let $n$, $h$ and $L$ be three positive integers, $h\leq L^{\lfloor\frac{n}{2}\rfloor}$. The $h$-extra edge-connectivity of $K^{n}_{L}$ is $\lambda_{h}(K^{n}_{L})=\lambda_{h}(K^{n}_{L})=(L-1)nh-ex_{h}(K^{n}_{L})$
where $h=\sum\nolimits_{i=0}^{s}a_{i}L^{b_{i}}$, $ex_{h}(K^{n}_{L})=\sum\nolimits_{i=0}^{s}(L-1)a_{i}b_{i}L^{b_{i}}+(a_{i}-1)a_{i}L^{b_{i}}+2\sum\nolimits_{i=0}^{s-1}\sum\nolimits_{j=i+1}^{s}a_{i}a_{j}L^{b_j}$.
\end{cor}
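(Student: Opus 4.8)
The plan is to recognize Corollary \ref{cor2} as nothing more than the instance $\mathcal{P}=\mathcal{P}_1^h$ of the freshly established Theorem \ref{th2}, after which one only has to paste in the explicit closed form for $\xi_h(K_L^n)$. First I would note that, by the very definition given in Section II, the $h$-extra edge-connectivity $\lambda_h(K_L^n)$ coincides with $\lambda(\mathcal{P}_1^h,K_L^n)$, where $\mathcal{P}_1^h$ is the property of containing at least $h$ vertices. Theorem \ref{bipart} already asserts that $\lambda(\mathcal{P}_1^h,K_L^n)$ is bipartite, so the first hypothesis of Theorem \ref{th2} is discharged immediately.

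Next I would compute $\theta_{\mathcal{P}_1^h}(K_L^n)=\min\{|X| : K_L^n[X]\text{ satisfies }\mathcal{P}_1^h\}$. Since every vertex set $X$ with $|X|\ge h$ induces a subgraph on at least $h$ vertices, and no smaller set can, the minimum is $\theta_{\mathcal{P}_1^h}(K_L^n)=h$, a connected witness of this size being furnished by the nested optimal set $L_h^n$. The standing assumption $h\le L^{\lfloor n/2\rfloor}$ is exactly the range demanded by Theorem \ref{th2}, so invoking that theorem yields $\lambda_h(K_L^n)=\xi_{\theta_{\mathcal{P}_1^h}(K_L^n)}(K_L^n)=\xi_h(K_L^n)=(L-1)nh-ex_h(K_L^n)$; substituting the previously derived closed form of $ex_h(K_L^n)$ then produces the stated expression verbatim.

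Should a self-contained argument be preferred over citing Theorem \ref{th2}, I would instead prove the two bounds directly. For the lower bound, any minimum $h$-extra edge-cut $F$ splits $K_L^n$ into exactly two connected parts by bipartiteness, the smaller of size $m$ with $h\le m\le \lfloor L^n/2\rfloor$; hence $|F|=|[X,\overline X]|\ge \xi_m(K_L^n)$, and the layered monotonicity of $\xi_m$ supplied by Lemma \ref{Znz} (in the window up to $L^{\lfloor n/2\rfloor}$) together with Lemma \ref{gld} forces $\xi_m(K_L^n)\ge \xi_h(K_L^n)$. For the upper bound, the set $L_h^n$ of size $h\le L^{\lfloor n/2\rfloor}$ induces a connected subgraph whose complement is also connected and has $L^n-h\ge h$ vertices, so $[L_h^n,\overline{L_h^n}]$ is a genuine $h$-extra edge-cut of size $\xi_h(K_L^n)$.

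Once Theorem \ref{th2} is available the whole thing is bookkeeping, and I expect the only delicate point to be confirming that the minimum of $\xi_m(K_L^n)$ over the admissible window $h\le m\le\lfloor L^n/2\rfloor$ is attained at the left endpoint $m=h$. This is precisely the content of the monotonicity Lemmas \ref{Znz} and \ref{gld}, and it is the step where the hypothesis $h\le L^{\lfloor n/2\rfloor}$ is truly indispensable: outside this window $\xi_m$ need no longer be minimized at $m=h$, and the clean closed form would break down.
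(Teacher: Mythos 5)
Your proposal is correct and follows exactly the paper's intended derivation: the paper states Corollary \ref{cor2} as the immediate instantiation of Theorem \ref{th2} with $\mathcal{P}=\mathcal{P}_1^h$, where $\theta_{\mathcal{P}_1^h}(K_L^n)=h$ and bipartiteness is supplied by Theorem \ref{bipart}, followed by substituting the closed form of $ex_h(K_L^n)$. Your remark that the hypothesis $h\leq L^{\lfloor n/2\rfloor}$ is what guarantees the minimum of $\xi_m(K_L^n)$ over $h\leq m\leq\lfloor L^n/2\rfloor$ is attained at $m=h$ (via Lemmas \ref{Znz} and \ref{gld}) correctly identifies the one genuinely delicate point, and matches the mechanism inside the paper's proof of Theorem \ref{th2}.
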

Our results improve the case for $L=2,3$, $h\leq 2^{\lfloor\frac{n}{2}\rfloor}$.
\begin{cor}\label{cor1}
For any integer $0\leq t\leq n-1$, $L=2,3$, $h\leq L^{\lfloor\frac{n}{2}\rfloor}$.
\par(1)~\cite{Zhang}(2018)$\lambda(\mathcal{P}_1^{h},K_3^n)=\lambda_{h}(K_3^n)=\xi_{h}(K^{n}_{3})$;
\par(2)~\cite{Li(2013)}(2013)$\lambda(\mathcal{P}_1^{t},K_2^n)=\lambda_{h}(K_2^n)=\xi_{h}(K^{n}_{2})$;
\par(3)~\cite{Zhang(2014)}(2014)$\lambda(\mathcal{P}_1^{h},\mathcal{B}_n)=\lambda_{h}(\mathcal{B}_n)=\xi_{h}(\mathcal{B}_n)$;
\par(4)~\cite{hl}(2013)$\lambda(\mathcal{P}_1^{4},\mathcal{B}_n)=\lambda_{4}(\mathcal{B}_n)=\xi_{4}(\mathcal{B}_n)=4n-8$.
\end{cor}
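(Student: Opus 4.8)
The plan is to recognize that this corollary merely records four previously published special cases, and to derive each of them as a direct specialization of the general $h$-extra formula \textbf{Corollary} \ref{cor2}, together with the transfer identity $\xi_m(\mathcal{B}_n)=\xi_m(K_2^n)$ for the bijective connection networks. The only conceptual ingredient needed beyond \textbf{Corollary} \ref{cor2} is that, by definition, the property $\mathcal{P}_1^h$ (``containing at least $h$ vertices'') yields exactly the $h$-extra edge-connectivity, so $\lambda(\mathcal{P}_1^h,G)=\lambda_h(G)$. Once this identification is made, parts (1) and (2) are obtained by substituting $L=3$ and $L=2$, respectively, into the identity $\lambda_h(K_L^n)=(L-1)nh-ex_h(K_L^n)=\xi_h(K_L^n)$, which is valid in the announced range $h\le L^{\lfloor n/2\rfloor}$.

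For part (3) I would argue that the general characterization $\lambda_h(G)=\min\{\xi_m(G):h\le m\le\lfloor|V(G)|/2\rfloor\}$ holds for every connected $G$, in particular for $G=\mathcal{B}_n$. Feeding the identity $\xi_m(\mathcal{B}_n)=\xi_m(K_2^n)$ into this formula reduces the computation of $\lambda_h(\mathcal{B}_n)$ to that of $\min\{\xi_m(K_2^n):h\le m\le 2^{n-1}\}$. The monotonicity established in \textbf{Lemma} \ref{Znz} on the interval $[1,2^{\lfloor n/2\rfloor}]$, together with \textbf{Lemma} \ref{gld} applied with $g=1$, $t=\lfloor n/2\rfloor$ on $[2^{\lfloor n/2\rfloor},2^{n-1}]$, shows that this minimum is attained at $m=h$ whenever $h\le 2^{\lfloor n/2\rfloor}$; hence $\lambda_h(\mathcal{B}_n)=\xi_h(K_2^n)=\xi_h(\mathcal{B}_n)$.

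Finally, part (4) is the instance $h=4$ of part (3): since $4=2^2$ has the trivial binary decomposition with $s=0$, $a_0=1$, $b_0=2$, the formula of \textbf{Corollary} \ref{cor2} gives $ex_4(K_2^n)=(2-1)\cdot1\cdot2\cdot2^2=8$ and hence $\xi_4(K_2^n)=(2-1)\cdot n\cdot4-8=4n-8$, which equals $\lambda_4(\mathcal{B}_n)$.

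The calculations here are routine, so the only point requiring genuine care --- and thus the main (mild) obstacle --- is part (3): one must justify that the reduction from $\mathcal{B}_n$ to $K_2^n$ is legitimate, i.e. that the min-over-$m$ characterization applies verbatim to the whole class $\mathcal{B}_n$ and that, after substituting $\xi_m(\mathcal{B}_n)=\xi_m(K_2^n)$, the minimizing index is still $m=h$ for $h\le 2^{\lfloor n/2\rfloor}$. Everything else is bookkeeping against \textbf{Corollary} \ref{cor2}.
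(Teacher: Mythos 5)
Your proposal is correct and follows essentially the same route as the paper: the corollary is an immediate specialization of \textbf{Corollary}~\ref{cor2} (itself resting on \textbf{Theorem}~\ref{th2}, whose proof invokes \textbf{Lemma}~\ref{Znz} and \textbf{Lemma}~\ref{gld} exactly as you do for part (3)), combined with the identity $\xi_m(\mathcal{B}_n)=\xi_m(K_2^n)$ for the bijective connection networks and the definitional identification $\lambda(\mathcal{P}_1^h,G)=\lambda_h(G)$. Your computation $ex_4(K_2^n)=8$, hence $\xi_4(K_2^n)=4n-8$, matches the paper's part (4), and you correctly read the typo $\mathcal{P}_1^{t}$ in part (2) as $\mathcal{P}_1^{h}$.
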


\section{Conclusion}
Reliability evaluation and fault tolerance of an interconnection network of some parallel and distributed systems are discussed separately under various link-faulty hypotheses in terms of different $\mathcal{P}$-conditional edge-connectivity, where $\mathcal{P}$ is some graph-theoretic property of a connected graph $G$.
This paper deals with the $\mathcal{P}$-conditional edge-connectivities of hamming graph $K_{L}^{n}$ with satisfying the property that each minimum $\mathcal{P}$-conditional edge-cut separates the $K_{L}^{n}$ just into two components. And these $\mathcal{P}$-conditional edge-connectivity is called bipartite. We show that for hamming graph $K_{L}^{n}$, the $L^{t}$-extra edge-connectivity, $t$-embedded edge-connectivity, $(L-1)t$-super edge-connectivity, $(L-1)t$-average edge-connectivity and $L^{t}$-th isoperimetric edge-connectivity share the same values in form of $(L-1)(n-t)L^{t}$.
Besides, we also obtain the exact values of $h$-extra edge-connectivity and $h$-th isoperimetric edge-connectivity of hamming graph  $K_{L}^{n}$ for each $h\leq L^{\lfloor {\frac{n}{2}} \rfloor}$. Among these six kinds of connectivity, the solution of $h$-extra edge-connectivity is crucial for solving the other five or even more general $\mathcal{P}$-conditional edge-connectivity. Because if any abstract $\mathcal{P}$-conditional edge-connectivity with bipartite property, the smallest cut must be divided into two components, which is composed of a component and its complement. The exact value of the $\mathcal{P}$-conditional edge-connectivity can be determined according to the size of the $\lambda(\mathcal{P})$-atom. The value of $h$-extra edge-connectivity is just a description of the number vertices in the component.
Our results improve several previous results on this topic and can be applied to bijective connection networks, which contain hypercubes, twisted cubes, crossed
cubes, M\"obius cubes, locally twisted cubes and so on.

\end{document}